\numberwithin{equation}{section}
\newcommand{\bC}{{\mathbb C}}
\newcommand{\bP}{{\mathbb P}}
\newcommand{\bR}{{\mathbb R}}
\newcommand{\bZ}{{\mathbb Z}}
\newcommand{\cF}{\mathcal F}
\newcommand{\cA}{\mathcal A}
\newcommand{\cB}{\mathcal B}
\newcommand{\cH}{\mathcal H}
\newcommand{\sL}{\EuScript L}
\newcommand{\CL}{\mathcal {CL}}
\newcommand{\G}{{\mathcal G}}
\newcommand{\Tq}{T_{q}^{*}Q}
\newcommand{\TQ}{T^{*}Q}
\newcommand{\SQ}{S^{*}Q}
\newcommand{\fold}{\boxbslash}
\newcommand{\fit}{D^{\boxbar}}
\newcommand{\permute}{D^{\boxslash}}
\newcommand{\shuffle}{\bullet }
\newcommand{\btimes}{ \veebar }
\newcommand{\vx}[1][\!]{\vec{x}^{\, #1}}
\renewcommand{\P}{{\mathrm P}}
\newcommand{\F}{{\mathrm F}}
\newcommand{\U}[1][\!]{U^{#1}}
\newcommand{\V}[1][\!]{V^{#1}}
\newcommand{\sJ}{{\EuScript J}}
\newcommand{\sH}{{\EuScript H}}
\newcommand{\Orbit}{{\EuScript O}}
\newcommand{\Chord}{{\EuScript X}}
\newcommand{\bfD}{\mathbf{D}}
\newcommand{\codim}{\operatorname{codim}}
\newcommand{\ev}{\operatorname{ev}}
\newcommand{\Disc}{{\mathcal R}}
\newcommand{\Discbar}{\overline{\Disc}}
\newcommand{\Cyl}{{\mathcal M}}
\newcommand{\Cylbar}{\overline{\Cyl}}
\newcommand{\Ann}{{\mathcal C}}
\newcommand{\Annbar}{\overline{\Ann}}
\newcommand{\OC}{\mathcal {OC}}
\newcommand{\CO}{\mathcal {CO}}
\newcommand{\Wrap}{\EuScript{W}}
\newcommand{\Pil}{\mathcal P}
\newcommand{\Pilbar}{\overline{\Pil}}
\newcommand{\Mod}{\mathcal X}
\newcommand{\Map}{\operatorname{Map}}
\newcommand{\Lef}{C^{\sL}}
\newcommand{\Spin}{Spin}
\renewcommand{\dbar}{\overline{\partial}}
\def\co{\colon\thinspace}
\newtheorem{thm}{Theorem}[section]
\newtheorem{cor}[thm]{Corollary}
\newtheorem{lem}[thm]{Lemma}
\newtheorem{prop}[thm]{Proposition}
\newtheorem{defin}[thm]{Definition}
\newtheorem{def-lem}[thm]{Definition-Lemma}
\theoremstyle{remark}
\newtheorem{rem}[thm]{Remark}
\newtheorem{example}[thm]{Example}
\newcommand{\superscript}[1]{\ensuremath{^{\textrm{#1}}} }
\renewcommand{\th}[0]{\superscript{th}}
\newcommand{\st}[0]{\superscript{st}}
\newcommand{\noproof}{\qed}
\newcommand{\comment}[1]{}
\title[A cotangent fibre generates the Fukaya category]{A cotangent fibre generates the Fukaya category}
\author[M.~Abouzaid]{Mohammed Abouzaid} \date{\today}
\thanks{This research was conducted during the period the author served as a Clay Research Fellow. }
\begin{document}
\begin{abstract}
We prove that the algebra of chains on the based loop space recovers the derived (wrapped) Fukaya category of the cotangent bundle of a closed smooth oriented manifold.  The main new idea is the proof that a cotangent fibre generates the Fukaya category using a version of the map from symplectic cohomology to  the homology of the free loop space introduced by Cieliebak and Latschev. 
\end{abstract}
\maketitle
\setcounter{tocdepth}{1}
\tableofcontents

\section{Introduction}
In this paper, we prove that the wrapped Fukaya category of a cotangent bundle is expressible in terms of purely homotopy-theoretic data:
\begin{thm} \label{thm:main_thm}
If $Q$ is an oriented closed smooth manifold, then any cotangent fibre generates the wrapped Fukaya category of $T^* Q$ with background class $b \in  H^{*}(\TQ, \bZ_{2})$ given by the pullback of the second Stiefel-Whitney class of $Q$.  Moreover, the triangulated closure of this Fukaya category is quasi-isomorphic to the category of twisted complexes over $C_{-*} (\Omega_{q} Q) $.   \end{thm}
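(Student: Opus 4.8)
The plan is to reduce the theorem to the single geometric assertion that the cotangent fibre $F = \Tq$, with its canonical $\Spin$ structure, generates the wrapped Fukaya category $\Wrap(\TQ)$ equipped with the background class $b$. Once this is known the second half of the statement is formal: an $A_\infty$-category that is generated — not merely split-generated — by one object $L$ is quasi-equivalent to the category $\Tw$ of twisted complexes over the $A_\infty$-algebra $\Hom(L,L)$, so it remains only to invoke the identification of the wrapped Floer algebra $CW^{*}(F,F)$ with $C_{-*}(\Omega_{q}Q)$ as $A_\infty$-algebras over $\bZ$, for which the background class $b$ is exactly the datum that makes the signs coherent. To obtain generation I would use the geometric generation criterion: writing $\mathcal B \subset \Wrap(\TQ)$ for the full subcategory on $F$ and $n=\dim Q$, it is enough to show that the open–closed map
\[
\OC_{\mathcal B} \co HH_{*-n}(\mathcal B,\mathcal B) \longrightarrow SH^{*}(\TQ)
\]
hits the unit $1 \in SH^{0}(\TQ)$; this gives split-generation, and the upgrade to honest generation (so that the triangulated closure is $\Tw$, not its idempotent completion) is a separate, softer argument using the structure of $\TQ$. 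Through the $A_\infty$-identification one has $HH_{*-n}(\mathcal B,\mathcal B) \cong HH_{*-n}(C_{-*}(\Omega_{q}Q))$, which by the classical theorem of Goodwillie (equivalently Burghelea–Fiedorowicz) is, up to regrading, the homology $H_{*}(\mathcal L Q)$ of the free loop space. So the problem becomes: find a class in $H_{*}(\mathcal L Q)$ whose $\OC$-image is the unit of symplectic cohomology.

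The new ingredient is a Cieliebak–Latschev type map. Fixing a quadratic-growth Hamiltonian presenting $SH^{*}(\TQ)$, I would construct a chain map
\[
\CL \co SC^{*}(\TQ) \longrightarrow C_{n-*}(\mathcal L Q)
\]
by recording, through the projection $\TQ \to Q$, the family of free loops in $Q$ swept out by the moduli spaces of finite-energy holomorphic planes in the completion of $\TQ$ asymptotic at their puncture to the Hamiltonian orbits generating $SC^{*}$. I need three things. First, that $\CL$ is a chain map. Second, that it sends the cohomological unit to the fundamental class $[Q] \in H_{n}(\mathcal L Q)$ of the constant loops — the orbits of least action lie on the zero section and carry no nonconstant rigid planes — consistently with the fact that the symplectic-cohomology–to–free-loop-space isomorphism carries $1$ to $[Q]$. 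Third, that on homology $\CL$ represents that isomorphism, hence is bijective; I would not prove this head-on, but extract it from the composite computation below together with a degreewise, free-homotopy-class-by-free-homotopy-class dimension count against the (twisted) Viterbo/Abbondandolo–Schwarz isomorphism $SH^{*}(\TQ) \cong H_{n-*}(\mathcal L Q)$ — the background class $b$ being precisely the twist that renders the right-hand side untwisted integral homology, and each component of $\mathcal L Q$ having finitely generated homology in each degree because $Q$ is a finite complex.

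The heart of the argument is the computation of the composite
\[
\CL \circ \OC_{\mathcal B} \co HH_{*-n}(C_{-*}(\Omega_{q}Q)) \longrightarrow SH^{*}(\TQ) \longrightarrow H_{n-*}(\mathcal L Q),
\]
which I claim equals, up to an overall sign, the isomorphism predicted by classical algebraic topology (Goodwillie's theorem, combined with Poincaré duality on $\mathcal L Q$ to reconcile grading conventions). I would prove this by a single moduli-space argument: glue the half-cylinders defining $\CL$ onto the interior output punctures of the discs defining $\OC$, producing a one-parameter family of moduli spaces of discs with boundary on $F$, cyclically ordered boundary input punctures asymptotic to Hamiltonian chords of $F$, and a free output loop in $Q$. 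One end of the degeneration is $\CL \circ \OC_{\mathcal B}$; the other end is a moduli space of discs with boundary on $F$, chord inputs, and an interior marked point whose projection traces out the output loop, which — once the chords of $F$ are identified with Morse chains on $\Omega_{q}Q$ through the $A_\infty$-identification $CW^{*}(F,F) = C_{-*}(\Omega_{q}Q)$ — is exactly the cyclic-bar-complex model for Goodwillie's comparison map, the family itself supplying the chain homotopy. Granting the composite is an isomorphism, $\CL$ is surjective, hence (by the dimension count) an isomorphism, hence so is $\OC_{\mathcal B}$; in particular $1 = \CL^{-1}([Q])$ lies in the image of $\OC_{\mathcal B}$, and the generation criterion finishes the proof.

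I expect the principal obstacle to be analytic and combinatorial rather than conceptual: proving transversality and Gromov compactness for the moduli spaces underlying $\CL$, and above all for the interpolating family in the composite, with complete control of every boundary stratum — orbit breaking at the puncture, disc bubbling along $F$, and the free loop space degenerations — so that the counts genuinely define the asserted chain maps and the asserted chain homotopy. The second delicate point is structural: identifying the output of the glued moduli problem with the purely homotopy-theoretic comparison map requires choosing compatible combinatorial (cyclic, or cubical) models on both sides and checking that the resulting map is the standard isomorphism and not merely some isomorphism, while pinning down all the signs over $\bZ$; that is where most of the labor will lie.
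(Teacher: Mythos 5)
Your overall architecture is correct and close to the paper's: generation reduces to the criterion from \cite{generation} that the open--closed map $\OC$ hit the cohomological unit of $SH^{*}_{b}(\TQ)$; one constructs a Cieliebak--Latschev-type map $\CL$ from the symplectic cochain complex to chains on $\sL Q$ by half-cylinders; and a gluing/degeneration of annuli compares $\CL\circ\OC$ to a purely algebraic map out of the cyclic bar complex of $C_{-*}(\Omega_q Q)$. The upgrade from split-generation to generation via the functor to $\Tw\bigl(C_{-*}(\Omega_q Q)\bigr)$ from \cite{string-top} is also essentially what the paper does, though you leave it as ``a separate, softer argument'' whereas the paper needs Corollary~1.2 of \cite{string-top} specifically.

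The genuine gap is that you insist on identifying the composite $\CL\circ\OC$ with ``the isomorphism predicted by classical algebraic topology.'' Concretely, the annulus degeneration identifies $\CL\circ\OC$, up to sign and chain homotopy, with a map $\G\circ CC_*(\cF)$, where $\G$ is some explicit chain-level map out of the cyclic bar complex of $C_{-*}(\Omega_q Q)$. Your proposal then requires showing that $\G$ is the \emph{standard} Goodwillie comparison isomorphism, not merely an analogous construction. That comparison lives in the combinatorics of cubical versus simplicial chains, cyclic bar models, and Adams's cobar construction; it is a substantial piece of homotopy theory in its own right, and, as you note yourself, ``that is where most of the labor will lie.'' The paper sidesteps this entirely. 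It builds an ad-hoc $\G$ (Section~\ref{sec:an-ad-hoc}) vanishing on words of length $\geq 3$, explicitly declines to prove $\G$ is an isomorphism, and proves only the much weaker Lemma~\ref{lem:fundamental_class_in_image}: the fundamental class $[Q]\in H_{n}(\sL Q)$ lies in the image of $H^{*}(\G)$. Combined with Lemma~\ref{lem:constants_factor} (that $H^*(\CL)$ sends the unit to $[Q]$), with Proposition~\ref{prop:maps_iso} (that $H^{*}(\cF)$ and $H^{*}(\CL)$ are isomorphisms), and with the commutativity of the square (Proposition~\ref{prop:commutative_diagram}), this already forces the unit to be in the image of $H^{*}(\OC)$. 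Nothing about the image of arbitrary classes under $\G$ is needed, and the full Goodwillie identification is never used.

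A second point: you propose to deduce that $\CL$ is an isomorphism from surjectivity (granted by the composite argument) plus a degree-by-degree, component-by-component count against the Viterbo/Abbondandolo--Schwarz isomorphism $SH^{*}_{b}(\TQ)\cong H_{n-*}(\sL Q)$. That is logically consistent for finitely generated abelian groups, but it is in effect a re-derivation of the very result the paper cites as Proposition~\ref{prop:maps_iso} (the announced Abbondandolo--Schwarz theorem, whose proof sketch is in Section~5 of \cite{string-top} and whose sign issues are discussed in Appendix~\ref{sec:signs-orientations}). The paper treats this input as given rather than re-proving it. Restructuring your argument around the weaker ``hit the fundamental class'' statement, as the paper does, removes both of these burdens at once.
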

\begin{rem}
In \cite{nadler}, Nadler shows that a different version of the Fukaya category of a cotangent bundle which he constructed with Zaslow in \cite{NZ} whenever $Q$ is real analytic, is equivalent to the category of constructible sheaves on $Q$.  
\end{rem}
\begin{rem} \label{rem:sign-error}
The result uses the existence of coherent orientations of moduli spaces of holomorphic discs with boundary on any collection of Lagrangians whose second Stiefel-Whitney class is the restriction of the same background class in the cohomology of the total space, as well as the identification of an appropriate twist of the symplectic cohomology of $\TQ$ with the homology of the free loop space of $Q$.  The fact that the untwisted version of symplectic cohomology is not in general isomorphic to the homology of the free loop space was verified by Seidel for the case of $T^* \bC \bP^2$ in \cite{seidel:cp2}, in an attempt to elucidate the source of a sign discrepancy between the construction of a Viterbo restriction map on symplectic cohomology \cite{viterbo}, and a generalisation established by Kragh in \cite{kragh} using generating functions.  There is also a version of Theorem \ref{thm:main_thm}, stating that the \emph{untwisted} wrapped Fukaya category of $\TQ$ is also generated by a fibre, and is equivalent to the category of modules over the chains of the based loop space of $Q$ with twised coefficients (see Remark 1.2 in \cite{string-top}). 
\end{rem}
Note that as a consequence of the above equivalence, we conclude that the Grothendieck $K$-group of the wrapped Fukaya category of $\TQ$ is free of rank $1$, and is generated by the class of a cotangent fibre.  Since the zero section intersects the fibre in exactly one point, we find that the homomorphism 
\begin{equation*}
  K_{0}(\Wrap(\TQ)) \to \bZ
\end{equation*}
is realised by taking the Euler characteristic of the space of morphisms to (or from) the zero section. 

The fact that the wrapped Fukaya category is \emph{generated} rather than split-generated does not follow from the machinery of \cite{generation}.  Rather, it is a consequence of the existence of 
an $A_{\infty}$ homomorphism $\cF$  from the wrapped Floer cochain complex of a cotangent fibre to the Pontryagin differential graded algebra $C_{-*} (\Omega_{q} Q)  $ of chains on the based loop space, which was constructed in \cite{string-top}.  On homology this homomorphism induces a map
\begin{equation}
\label{eq:map_wrapped_Floer_based_loop}
H^{*}(\cF) \co  HW^{*}_{b}(\Tq) \to H_{-*} (\Omega_{q} Q).
\end{equation}
This map has a closed string analogue from symplectic cohomology to the homology of the space of free loops
\begin{equation}
  \label{eq:map_SH_free_loops}
H^{*}(\CL) \co SH^{*}_{b}(\TQ) \to H_{n-*}(\sL Q),
\end{equation}
which is compatible with the grading of both sides by the set of components of the free loop space.  Such a map was first proposed by Cieliebak and Latschev in \cite{CL} who used it to compare algebraic structures in Symplectic Field theory with those coming from String topology. In Section \ref{sec:count-half-cylind}, we give a realisation of this map in the setting of Floer theory.  Note that this is one place where the orientability assumption is used: in general, the twisted version of symplectic cohomology that we consider is isomorphic to the homology of the free loop space with coefficients in the orientation bundle of the base, pulled back by evaluation at the basepoint.
\begin{prop} \label{prop:maps_iso}
 $ H^{*}(\cF) $ and $ H^{*}(\CL) $ are both isomorphisms. \noproof
\end{prop}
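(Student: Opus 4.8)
The plan is to realise each of $H^{*}(\cF)$ and $H^{*}(\CL)$ as a filtered map whose associated graded is a comparison map between a Floer complex and a Morse complex of the energy functional, and then to invoke the known Floer--Morse isomorphism theorems for cotangent bundles to conclude. On the symplectic side I would use the action filtration coming from the Hamiltonian (linear at infinity, or fibrewise quadratic) defining $HW^{*}_{b}(\Tq)$ and $SH^{*}_{b}(\TQ)$; on the topological side I would filter $H_{-*}(\Omega_{q}Q)$ and $H_{n-*}(\sL Q)$ by sublevel sets of the energy functional on the space of based, respectively free, loops in $Q$, which is the filtration underlying the identification of these homologies with Morse homologies of the energy functional. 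The first step is to check that the chain-level maps defining $\cF$ and $\CL$ --- counts of holomorphic half-strips with one boundary on the fibre and one puncture constrained to map to $\Omega_{q}Q$, respectively of holomorphic half-cylinders with one puncture constrained to map to $\sL Q$ --- are filtered. This follows from the a priori energy estimate for the relevant Floer equation together with the fact that over $\TQ$ the symplectic action of a Hamiltonian chord or orbit lying over a Reeb chord or orbit is, up to the fixed Hamiltonian profile, its length, so that the energy of the output loop is bounded by the action of the input.

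The second step is to identify the induced map on associated graded. Passing to the associated graded concentrates the moduli problem near the Hamiltonian orbits and chords lying over a fixed Reeb orbit or chord, i.e.\ near reparametrised geodesics; by the Floer--Morse comparison theorems for cotangent bundles (Abbondandolo--Schwarz, Salamon--Weber, and their wrapped and based-loop analogues) these localised Floer complexes are identified with Morse complexes of the energy functional on $\Omega_{q}Q$ and on $\sL Q$, and under this identification the half-strip and half-cylinder counts defining $\cF$ and $\CL$ degenerate to the canonical quasi-isomorphism from the Morse complex of a function to the singular chains on its domain. Hence $H^{*}(\cF)$ and $H^{*}(\CL)$ are isomorphisms on each associated graded piece, and a five-lemma argument --- equivalently, a comparison of the induced spectral sequences --- promotes this to the stated isomorphisms. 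The coefficient bookkeeping requires care: the coherent orientations of the Floer moduli spaces are built from the background class $b$, the pullback of $w_{2}(Q)$, and it is exactly this choice that makes the target of $\CL$ the homology of $\sL Q$ twisted by the pullback of the orientation local system of $Q$, which, since $Q$ is orientable, is the untwisted $H_{n-*}(\sL Q)$ of the statement. For $\cF$ one only needs its underlying linear term, so it may be treated as a plain chain map here; alternatively one can hope to deduce $H^{*}(\cF)$ from $H^{*}(\CL)$ using compatibility with the closed--open map $SH^{*}_{b}(\TQ) \to HW^{*}_{b}(\Tq)$ and the Gysin map $H_{n-*}(\sL Q) \to H_{-*}(\Omega_{q}Q)$ associated to evaluation at the basepoint, although this would require additional control of those two maps.

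The step I expect to be the main obstacle is the chain-level geometric input of the second step: establishing transversality and the appropriate partial compactness for the non-compact moduli spaces of half-cylinders carrying a loop-space constraint on one end, checking that breaking respects the action/energy filtration so that the associated graded decouples, and --- most delicately --- arranging the identification with the Morse picture so that it is compatible with, rather than merely parallel to, the comparison used in the established Viterbo and Abbondandolo--Schwarz isomorphisms, all while keeping the $b$-twisted coherent orientations under control. Once this local comparison is in place, the remainder of the argument is formal.
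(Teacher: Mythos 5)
The paper gives no proof of this proposition: the remark immediately following it explains that the statement was announced by Schwarz as joint work with Abbondandolo, that a sketch of the proof for $H^{*}(\cF)$ appears in Section~5 of \cite{string-top}, and that the same method yields the statement for $H^{*}(\CL)$; the only contribution here is the sign and orientation analysis in Appendix~\ref{sec:signs-orientations}. Your plan --- filter the Floer side by action and the loop-space side by energy, verify that the chain maps respect the filtrations via an a priori energy estimate, identify the associated graded pieces with the established Abbondandolo--Schwarz / Salamon--Weber comparison between Floer theory on $\TQ$ and Morse theory of the energy functional, then conclude by an upper-triangularity or spectral-sequence argument --- is exactly the strategy of the cited references, so at the level of a plan you have reconstructed what the paper delegates to external sources, and you have correctly flagged the role of the background class $b$ in making $H_{n-*}(\sL Q)$ rather than a twisted version the target (cf.\ Remark~\ref{rem:sign-error}). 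Two descriptive slips, which do not affect the plan: $\cF$ counts half-discs whose \emph{outgoing boundary segment} maps to the zero section $Q$ with the remaining boundary on $\Tq$, the chain in $\Omega_{q}Q$ being the evaluation of that segment, and $\CL$ counts half-cylinders with Lagrangian boundary on $Q$, the chain in $\sL Q$ being the restriction to that boundary circle --- in neither case is there a puncture carrying a loop-space constraint. The hard content you single out in your last paragraph (filtered compactness, genuine compatibility with, not just parallelism to, the Abbondandolo--Schwarz comparison, and carrying the $b$-twisted coherent orientations through) is precisely what is outsourced to \cite{string-top} and to Abbondandolo--Schwarz rather than proved in this paper.
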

\begin{rem}
The statement of this Proposition was communicated to the author by Schwarz in Summer 2009 as an announcement of results obtained and to be written jointly with Abbondandolo.  Subsequently, a sketch of the proof for $ H^{*}(\cF)  $ was included in Section 5 of \cite{string-top}.  Given the nature of the construction, one can use the same method to show that $ H^{*}(\CL)  $ is an isomorphism, and we briefly discuss the relevant signs in Appendix \ref{sec:signs-orientations}.
\end{rem}

In addition to Proposition \ref{prop:maps_iso}, the proof of Theorem \ref{thm:main_thm} relies essentially on the results of \cite{generation}, which defines a map from the Hochschild homology of the $A_{\infty}$ algebra $CW^{*}_{b}(\Tq)  $  to symplectic cohomology:
\begin{equation}
  \label{eq:open_closed}
H^{*}(  \OC)  \co HH_{*}( CW^{*}_{b}(\Tq))  \to SH^{*+n}_{b}( \TQ)
\end{equation}
which we review in Section \ref{def:floer_datum_discs_interior_puncture}.

In Section \ref{sec:an-ad-hoc}, we construct a map
\begin{equation}
  \label{eq:homology_hochschild_loops}
H^{*}( \G)  \co HH_{*}(C_{-*} (\Omega_{q} Q) )  \to  H_{-*}(\sL Q)
\end{equation}
which we expect to be an isomorphism since it should be a version of Goodwillie's  isomorphism from \cite{good}.  While we do not prove this, we shall prove in Appendix \ref{sec:hitting} that the fundamental class of $Q$, included as constant loops in the homology of the free loop space,  lies in the image of $ H^{*}( \G) $.   In Lemma \ref{lem:constants_factor}, we show that $H^{*}(  \CL)  $ maps the identity of symplectic cohomology to this fundamental class.
\begin{prop}
  \label{prop:commutative_diagram}
The following diagram commutes up to sign:
\begin{equation}
  \label{eq:commutative_diagram_symplectic_string}
 \xymatrixcolsep{5pc} \xymatrix{ HH_{*}( CW^{*}_{b}(\Tq))  \ar[r]^{H^{*}(\OC)}  \ar[d]^{HH_{*}(\cF)} & SH^{*+n}_{b}( \TQ)   \ar[d]^{H^*(\CL)} \\
 HH_{*}(C_{-*} (\Omega_{q} Q) )   \ar[r]^{H^{*}(\G)}  &  H_{-*}(\sL Q). }
\end{equation}
\end{prop}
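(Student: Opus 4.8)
The plan is to realise both composites in diagram~\eqref{eq:commutative_diagram_symplectic_string} as boundary strata of a single parametrised moduli space and to extract the identity, up to sign, from the count of its endpoints. All four maps in the diagram are defined by counting rigid elements of compactified moduli spaces: $H^{*}(\OC)$ counts holomorphic discs with boundary on $\Tq$ carrying one negative interior puncture asymptotic to a Hamiltonian orbit and $k+1$ boundary punctures asymptotic to the chords labelling a Hochschild chain; $H^{*}(\CL)$ counts half-cylinders with one puncture asymptotic to an orbit, the remaining boundary circle being evaluated into $\sL Q$; the higher components $\cF^{i}$ count holomorphic discs with boundary on $\Tq$ whose boundary arc, projected to $Q$, is a loop based at $q$; and $H^{*}(\G)$ is the elementary concatenate-and-close map on the cyclic bar complex of $C_{-*}(\Omega_{q}Q)$ built in Section~\ref{sec:an-ad-hoc}. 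The first step is to choose Floer data for the interior puncture and the boundary punctures that are simultaneously compatible with those used in the definitions of $\OC$, $\cF$, and $\CL$ --- reconciling, by continuation maps if the asymptotic Hamiltonians differ, the conventions of \cite{generation} and \cite{string-top} --- so that the degenerations below are governed by genuine gluing theorems.

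With these choices fixed, I would construct a compact $1$-dimensional moduli space $\mathcal{N}$ whose generic element is a disc with boundary on $\Tq$, with $k+1$ boundary punctures asymptotic to the chords of the given Hochschild chain, with one interior puncture to which a $\CL$-type half-cylinder is attached, and with one additional real parameter $\rho\in[0,\infty]$ measuring the length of a neck inserted along a circle in the disc concentric with the interior puncture (separating, as it grows, a small inner disc around the interior puncture from an outer region carrying the boundary punctures, which further delaminates so that cyclically consecutive blocks of boundary punctures land on separate sub-discs). At $\rho=0$ the neck degenerates so that the $\OC$-disc is broken off from the $\CL$-half-cylinder along the orbit at the interior puncture; that boundary stratum of $\mathcal{N}$ computes precisely $H^{*}(\CL)\circ H^{*}(\OC)$. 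At $\rho=\infty$ the disc has fully delaminated into $m+1$ sub-discs, the $j$th of them an element of the moduli space defining a component $\cF^{i_{j}}$ and hence producing a based loop $\gamma_{j}$ at $q$, together with a central piece retaining the interior puncture and its half-cylinder, whose free-loop end is the closed-up concatenation $\gamma_{0}\,\gamma_{1}\cdots\gamma_{m}$; the content of the proposition is that this stratum reproduces $H^{*}(\G)\circ HH_{*}(\cF)$, which demands an honest identification of the degenerate configuration with the concatenate-and-close construction of Section~\ref{sec:an-ad-hoc}.

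Granting the construction of $\mathcal{N}$, the remainder is bookkeeping: by Gromov compactness and the compatible Floer data, $\mathcal{N}$ is a compact topological $1$-manifold with boundary, and its codimension-one strata other than $\rho\in\{0,\infty\}$ are the familiar ones --- Floer breaking at a boundary puncture (a $\mu^{d}$ or differential term in $CW^{*}_{b}(\Tq)$, or a boundary of an $\cF^{i}$ moduli space), breaking of the half-cylinder (the $SH^{*}_{b}(\TQ)$ differential), and disc bubbling off the boundary --- all of which cancel in pairs or vanish after passing to homology and restricting to Hochschild cycles. The identity $\partial\mathcal{N}=0$ then yields the commutativity, up to the sign coming from the comparison of coherent orientations across the identification of $\partial\mathcal{N}$.

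The main obstacle is the degeneration analysis at $\rho=\infty$ and its compatibility with the \emph{ad hoc} map $\G$: since $\G$ is not itself defined by a pseudo-holomorphic count, one cannot simply quote a gluing theorem, and instead one must exhibit an explicit homotopy between the evaluation map on the neck-stretched locus and the concatenate-then-close map of Section~\ref{sec:an-ad-hoc}, using the contractibility of the space of reparametrisations and gluing profiles for composable paths in $Q$. A secondary difficulty is purely one of signs: the coherent orientations of the moduli spaces for $\OC$, $\cF$, $\CL$ and of the combinatorial model for $\G$ must be matched across $\partial\mathcal{N}$, and the reordering of the tensor factors of the Hochschild chain together with the cyclic wrap-around block contributes Koszul signs; tracking these precisely is the computation deferred to Appendix~\ref{sec:signs-orientations}, and is why the diagram commutes only up to sign.
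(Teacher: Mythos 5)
Your high-level strategy is right and matches the paper: one builds a compact one-dimensional family of annuli over the modular parameter whose two extreme degenerations realise the two composites, and the remaining boundary strata are absorbed by $\cH\circ b$. Your $\rho=0$ end (interior-node breaking into $\Discbar^{1}(y)\times\Discbar^{1}_{d}(y;\vx)$) is exactly the paper's $r=\infty$ stratum and does give $\CL\circ\OC$.

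The gap is at the other end, and your description of it is wrong in a way that matters. As the annular modulus tends to $0$, the annulus does not ``fully delaminate into $m+1$ sub-discs plus a central piece retaining the interior puncture and half-cylinder''; rather, the two boundary circles (one on $Q$, one on $\Tq$) collide along a boundary node and the annulus degenerates into \emph{exactly two} half-discs, each with one arc on $Q$ and the rest on $\Tq$. The interior puncture and the half-cylinder disappear entirely at this stratum. Concretely the stratum is $\Pilbar_{d_1,1}\times\Pilbar_{d_2}$, where the first factor carries an \emph{extra marked point on the $Q$-arc} (this is the moduli space $\Pilbar_{d,1}$ of Section~\ref{sec:adding-marked-point}, built as a fibered product $\Pilbar_{d}\times_{\Pilbar_{d}}\Pilbar_{d,1}$ with fundamental chain given by the product with $[0,1]$). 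That extra marked point is what implements the ``moving basepoint'' parameter $t$ in the definition of $\btimes$, and hence the component $\G^{(2)}$; the case $d_1=0$ (where $\Pilbar_{0,1}$ is a constant triangle) produces $\G^{(1)}$. Without this two-piece structure and the extra marked point, there is no mechanism to recover the ad hoc formula for $\G$. It is also essential that $\G$ was \emph{defined} to vanish on words of length $\geq 3$: this is consistent with the degeneration producing exactly two half-discs and nothing longer, and it forces the use of the quotient complex $\Lef_{*}$ of Appendix~\ref{sec:cuboidal-chains} (quotient by $\fit$ and $\permute$), which you do not invoke but which is needed to split a single boundary chain of $\Pilbar_{d,1}(q,\vx,q)$ into the two pieces appearing in Equation~\eqref{eq:product_moduli_marked_point}. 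What you describe as ``further delamination of the outer region as $\rho$ grows'' is the codimension-one Deligne--Mumford bubbling $\Annbar^{-}_{d_1}\times\Discbar_{d_2}$; that is a separate boundary stratum (contributing $\cH\circ b$), not the $r\to 0$ limit, and a single parameter cannot control both.
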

Theorem \ref{thm:main_thm} is now a rather direct consequence of the results proved in \cite{generation}.
\begin{proof}[Proof of Theorem  \ref{thm:main_thm}]
If  $ H^{*}(\cF) $ is an isomorphism, then so is the map induced by $\cF$ on Hochschild homology.  Knowing the two vertical arrows are isomorphisms and that the identity of $SH^{*}_{b}(\TQ)$ maps to the fundamental class of $Q$ under $ H^{*}(  \CL)  $, the commutativity of Diagram \eqref{eq:commutative_diagram_symplectic_string}, together with Lemma  \ref{lem:fundamental_class_in_image}, implies that the identity in symplectic cohomology lies in the image of $ H^{*}( \OC) $.  By Theorem 1.1 in \cite{generation}, we conclude that $\Tq$ split-generates the wrapped Fukaya category of the cotangent bundle.

To pass from split-generation to generation, we note that Corollary 1.2 in \cite{string-top} extends the $A_{\infty}$-homomorphism $\cF$ to a functor from the wrapped Fukaya category of $\TQ$ to the category of twisted complexes over $ C_{-*} (\Omega_{q} Q)$.   Since $\Tq$ split-generates the wrapped Fukaya category, this is a cohomologically fully faithful embedding, and hence every object of the wrapped Fukaya category of $\TQ$ is in fact isomorphic to an iterated cone of cotangent fibres.
\end{proof}

\begin{rem}
At first sight, our claim about the existence of a natural map from Hochschild homology to symplectic cohomology would seem to indicate that we failed to account for a dualisation, or at least to properly name one of the two groups.  The reason for the confusion is the fact that, while there is a natural map in the direction we indicated, there is also another from symplectic cohomology to Hochschild cohomology.  In the case of wrapped Fukaya categories of sufficiently nice manifolds (i.e. ones with enough Lagrangians), both of these maps are expected to be isomorphisms, and hence Hochschild homology and cohomology are isomorphic.  The isomorphism between them is expected to be part of the \emph{Calabi-Yau} structure on the wrapped Fukaya category, and is sufficiently non-trivial that its existence (in this setting) has not yet been proved.
\end{rem}

\subsection*{Acknowledgments}
I would like to thank Ronald Brown for pointing out Barr's work \cite{barr}, and Kate Ponto for helpful comments on a draft version of Appendix \ref{sec:cuboidal-chains}.  Much of this paper was written while the author visited MSRI during the 2009-10 program.   I would also like to thank Paul Seidel, Thomas Kragh, as well Alberto Abbondandolo, and Matthias Schwarz for discussions about the sign discussed in Remark \ref{rem:sign-error}; of course I am responsible for any and all remaining sign mistakes and misinterpretations of other people's sign conventions.   The final version of the paper benefited from useful comments from an anonymous referee.
\section{The open sector} \label{sec:open-sector}
Given a compact connected smooth manifold $Q$, the cubical chain complex of the space of Moore loops based at a point $q$  forms a differential graded algebra $C_{-*}(\Omega_{q} Q)$  where multiplication is induced by concatenation of paths.  To turn this into an $A_{\infty}$ structure, we use the conventions:
\begin{align*} 
\mu^{\P}_1 \sigma & \equiv  \partial \sigma \\
\mu_{2}^{\P}(\sigma_{2}, \sigma_{1}) & \equiv (-1)^{|\sigma_1|} \sigma_{1} \cdot \sigma_{2},
\end{align*}
where $|\sigma_1| = - \dim(\sigma_1) $.  In \cite{string-top} we constructed an $A_{\infty}$ homomorphism from the wrapped Floer cochains of a cotangent fibre to this algebra.  This section contains no new results, but it is instead meant to briefly review the notation \cite{string-top}, slightly simplified because we shall consider a Fukaya category consisting of only one object.  We shall also use $T^* S^1$  to illustrate the general construction.
\subsection{Geometric preliminaries} \label{sec:geom-prel}
Fix a Riemannian metric on $Q$, and let $\cH(\TQ)$  denote the space of smooth functions which agree with $|p|^2$ whenever $|p| \geq 1$ (here, we assume $Q$ is locally given coordinates $q_i$, with $p_i$ the corresponding coordinates of the cotangent fibre, and $|p|^2$ is shorthand for $\sum_{i=1}^{\dim(Q)} |p_i|^2$).  The cotangent bundle is equipped with the canonical Liouville $1$-form $\lambda$, whose differential is a symplectic form denoted $\omega$, and with a quadratic complex volume form obtained by complexifying an (ordinary) volume form on $Q$. We write $H(q,p) = |p|^2$ with Hamiltonian flow $X$, and assume that the following generic condition holds
\begin{equation}
  \label{eq:genericity_chords_orbits}
   \parbox{36em}{All Reeb orbits on the contact hypersurface  $\SQ$ where $|p|=1$ and all  flow lines of $X$ of time $1$ with boundary on $\Tq $ are non-degenerate. }
\end{equation}
We write $\Chord$ for the set of such flow lines which are called time-$1$ chords.  Since the complexification of a (real) volume form on $Q$ defines a complex-valued volume form on $\TQ$, we may assign to each chord $x$ a Maslov index we denote $|x|$, and a path $\Lambda_{x}$ of Lagrangians in $x^{*} \left( T \TQ\right) $ which agrees at either end with the tangent space to the fibre at $q$, and is uniquely determined up to homotopy by the property that the induced map
\begin{equation}
  \label{eq:path_Lagrangian_chord}
  S^1 \to \bC^{*}
\end{equation}
obtained by evaluating the square  of the holomorphic volume form on a frame of $\Lambda_{x}(t)$ is contractible.   As in Section (11l) of \cite{seidel-book}, one uses $\Lambda_{x}$ to define an elliptic operator $D_{x}$ on a disc with one puncture, whose determinant line we denote $o_x$.  The wrapped Floer complex has underlying graded abelian group
\begin{equation}
  \label{eq:wrapped_complex}
  CW^{i}_{b}(\Tq) = \bigoplus_{\substack{|x|= i \\ x \in \Chord}} | o_x|
\end{equation}
where $|o_x|$ is the rank $1$ free abelian group generated by the possible orientations of $o_x$ with the relation that the sum of opposite orientations vanishes.  The same construction can be performed at the intersection point $q$ of $\Tq$ and $Q$: we obtain a path $\Lambda_{q}$ of linear Lagrangians in $T \TQ| q$ starting at the tangent space of the zero section and ending at the cotangent fibre and write $o_{q}$ for the determinant line of the corresponding operator.

\begin{rem}
The reader who does not want to be burdened with keeping track of signs should instead think that $CW^{i}_{b}(\Tq)  $ is the abelian group freely generated by chords of Maslov index $i$.    
\end{rem}

In order to orient moduli space of holomorphic curves, we consider a vector bundle $E_{b}$ on $\TQ$ which is isomorphic to the pullback of the tangent bundle of $Q$.  On $\Tq$ and $Q$ we choose a \emph{relative $\Spin$ structure}.  Letting $L$ stand of either of these Lagrangians, such a structure is defined to be
\begin{equation}
  \label{eq:rel_pin_structure}
  \parbox{36em}{a $\Spin$ structure on the direct sum of $TL$ with the  restriction of $E_{b}$.}
\end{equation}
The obstruction to the existence of such a structure is the second Stiefel-Whitney class of the direct sum, which vanishes in one case because $\Tq$ is contractible, and in the other because it is equal to twice the Stiefel-Whitney class of $Q$.  For each chord $x$ and at the intersection point $q$ we also choose a \emph{relative $\Spin$ structure} which consists of a
\begin{equation}
  \label{eq:Pin_structure_chord}
  \parbox{36em}{a $\Spin$ structure on $\Lambda_x \oplus x^{*}(E_b)$ which restricts at the endpoints to the $\Spin$ structure on $\Tq \oplus E_{b}$.}
\end{equation}

Let $\sJ(\TQ) $ denote the space of almost complex structures on $\TQ$ which are compatible with $\omega$, and whose restriction to the complement of a compact set is of contact type in the sense that
\begin{equation*}
 \lambda \circ J = dr
\end{equation*}
whenever $J \in \sJ(\TQ) $.  Consider a family $I_t$ of such structures parametrised by the interval $[0,1]$ as well as a map 
\begin{equation}
  \tau \co [0,1] \to [0,1]
\end{equation}
which agrees with the identity on the boundary, and is locally constant in a neighbourhood thereof.

\begin{example}
 It is useful to keep in mind that the elements of $\Chord$ are in bijective correspondence with intersection points between $\Tq$ and its image under the time-$1$ Hamiltonian flow of $H$.  In the case $Q = S^1$, the top picture in Figure \ref{fig:cylinder-wrap} shows the cotangent fibre and its image under the flow.  Note that there is exactly one intersection point, and hence one chord, in each relative homotopy class of based paths on $S^1$.  After choosing an orientation for $S^1$, we may associate an integer to each such chord, corresponding to the number of times it winds around the circle.  All these chords have Maslov index $0$ with the standard choice of complex volume form on $ T^* S^1$, which, upon identification with $\bC^{*}$, takes the form $\frac{dz}{z}$.
\end{example}

\begin{figure}
  \centering
  \includegraphics{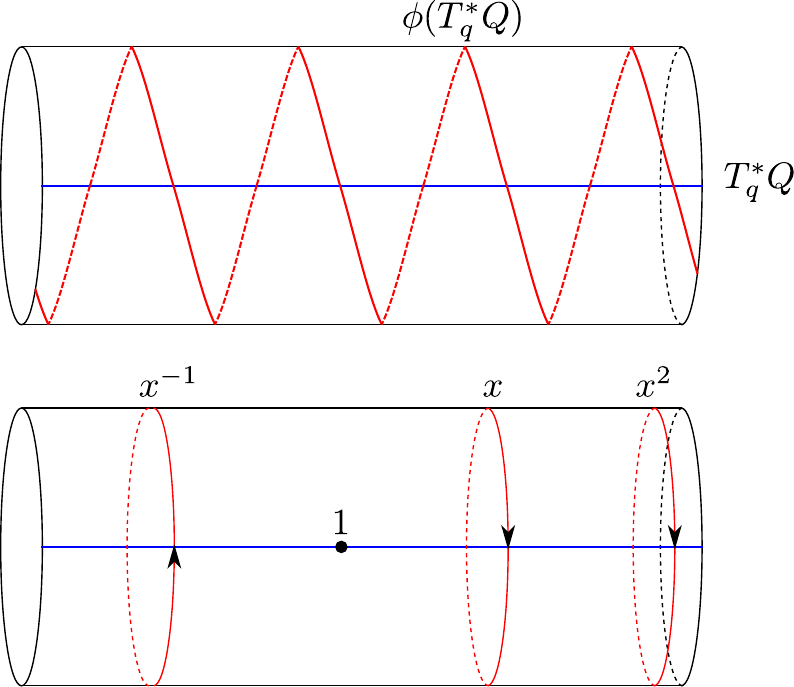}
  \caption{ }
  \label{fig:cylinder-wrap}
\end{figure}

\subsection{The wrapped Floer complex}
Given a pair $x_0, x_1$  of distinct elements of $\Chord$, we define $\Disc(x_0;x_1)  $ to be the quotient, by the $\bR$ action which translates the first variable, of the space of maps
\begin{equation*}
  u \co (-\infty,+\infty) \times [0,1] \equiv Z \to \TQ
\end{equation*}
taking the boundary to $\Tq$, that converge exponentially at $-\infty$ to $x_0$ and at $+\infty$ to $x_1$ and that satisfy Floer's equation
\begin{equation}
  \label{eq:dbar_strip}
  \left(du - X \otimes d\tau  \right)^{0,1} = 0.
\end{equation}

Assuming that $I_t$ has been chosen generically, the moduli spaces $\Disc(x_0;x_1) $ are smooth manifolds of dimension
\begin{equation*}
  |x_0| - |x_1| -1.
\end{equation*}
Whenever $|x_0| =  |x_1| +1 $, we conclude that all elements $u$ of $\Disc(x_0;x_1) $  are rigid. Using the choice of $\Spin$ structure fixed in \eqref{eq:Pin_structure_chord}, the standard argument proving invariance of the index under Fredholm deformations implies that every such rigid map $u$ defines a canonical isomorphism up to homotopy
\begin{equation} \label{eq:iso_orientation_differential}
   o_{x_1} \to o_{x_0},
\end{equation}
as reviewed in Appendix  \ref{sec:signs-orientations}.

Writing $\mu_u$ for the induced map on orientation lines, we define 
\begin{align}
  \label{eq:wrapped_differential}
  \mu_{1} \co  CW^{i}_{b}(\Tq)  & \to CW^{i+1}_{b}(\Tq)  \\
[x_1] & \mapsto (-1)^i \sum_{u} \mu_u([x_1] ).
\end{align}
\begin{example}
  On $T^*S^1$,  $\mu_1$ vanishes identically since each chord lies in a different relative homotopy class. The vanishing of $\mu_1$ may also be proved using the fact that $\mu_1$ has degree $1$, while all generators have degree $0$.
\end{example}

\subsection{The $A_{\infty}$ structure}
The $A_{\infty}$ structure on $   CW^{*}_{b}(\Tq)  $ is defined by counting maps whose sources are elements of the compactified moduli space  $\Discbar_{d}$ of discs with one outgoing boundary marked point and $d$ incoming ones.  A point in the smooth part $\Disc_{d}$ is obtained by taking the complement $S$ of $d+1$ points removed from the boundary of a closed disc, with one of them distinguished as outgoing; starting with the outgoing point, we can order them counterclockwise $(\xi^0, \xi^{1}, \ldots, \xi^{d})$. We choose a \emph{negative end}  near the outgoing marked point, i.e. a holomorphic map from a negative half-strip
\begin{equation*}
  (-\infty, 0) \times [0,1]  \to S 
\end{equation*}
which converges at $-\infty$ to $\xi^0$ and parametrises a neighbourhood thereof.  At the incoming marked points we choose \emph{positive ends}, which are parametrised instead by the positive half-strip.

These choices of ends can be made smoothly with respect to the modulus of the curve, allowing us to construct charts near the corner strata of $  \Discbar_{d}$.  Recall that a stratum $\sigma \subset  \partial \Discbar_{d}$ of codimension $k$ is represented by curves with $k$ distinct components arranged along a tree; each component can be thought of as lying in a moduli space $ \Disc_{d'}  $ for $d' < d$.  In particular, there are two strip-like ends, one positive, the other negative, associated to each node. For each parameter  $R \in (0,+\infty]$, we obtain a new Riemann surface by removing the images of $(-\infty, -R)$ and $(R,+\infty)$ for the two ends, and gluing the complements.  If we perform this construction at every node, we obtain a map
\begin{equation} \label{eq:chart_corner_moduli_space}
  \sigma \times (0, +\infty]^{k} \to  \Discbar_{d}
\end{equation}
which is a local homeomorphism near infinity.  Note that we are parametrising this chart by the gluing parameter, so that $ \sigma \times \{+\infty\}^{k}$ corresponds to the corner stratum.

Following \cite{seidel-book}, we shall not solve the actual $\dbar$ equation on elements of these moduli spaces, but rather perturbations thereof, which are allowed to depend on the modulus of the curve.
\begin{defin} \label{def:floer_datum_disc_1_output}
A  \emph{Floer datum} $D_{S}$ on a stable disc $S \in \Discbar_{d}$  consists of the following choices on each component:
\begin{enumerate}
\item Time shifting map:  A map $ \rho_{S} \co \partial S \to [1,+ \infty)$ which is constant near each end.  We write $w_{k,S}$ for the value on the $k$\th end. 
\item Basic $1$-form and Hamiltonian perturbations:  A closed $1$-form $\alpha_{S}$ whose restriction to the boundary vanishes, and a map $H_{S} \co S \to \sH(\TQ)$  on each surface defining a Hamiltonian flow $X_{S}$.  The pullback of $ X_{S} \otimes \alpha_{S} $ under the $k$\th end should agree with  
  \begin{equation*}
    X_{\frac{H}{w_{k,S}} \circ \psi^{w_{k,S}}} \otimes d\tau.
  \end{equation*}
\item Almost complex structures:  A map $I_{S} \co S \to \sJ(\TQ)$ whose pullback under the $k$\th end agrees with  $ (\psi^{w_{k,S}})^{*} I _t$.
\end{enumerate}
\end{defin}
This data allows us to write down the Cauchy-Riemann equation 
\begin{equation}
  \label{eq:dbar_disc_1_output}
\left(du - X_{S} \otimes \alpha_{S}\right)^{0,1} = 0
\end{equation}
on the space of maps from $u$ to $\TQ$.  In order for counts of solutions to this equation to define operations that satisfy the $A_{\infty}$ condition, we must choose these perturbations in a sufficiently compatible way for all possible Riemann surfaces $S$.  

We say that two such choices of data $\left( \rho_{S}^{1}, \alpha_{S}^{1}, H_{S}^{1}, I_{S}^{1} \right) $ and  $\left( \rho_{S}^{2}, \alpha_{S}^{2}, H_{S}^{2}, I_{S}^{2} \right) $ are \emph{conformally equivalent} if there exists a constant $C$ so that $\rho^{2}_{S}$ and $\alpha_{S}^{2}$ respectively agree with   $C \rho^{1}_{S}$ and $C \alpha_{S}^{1}$, and
\begin{align*}
 I_{S}^{2} & = {\psi^{C}}^{*} I_{S}^{1} \\
H_{S}^{2} & = \frac{H_{S}^{1} \circ \psi^{C}}{C^{2} }.
 \end{align*}

\begin{defin}
A \emph{universal and conformally consistent}  choice of Floer data for the $A_{\infty}$ structure,  is a choice $\bfD_{\mu}$  of such Floer data  for every integer $d \geq 2$, and every (representative of an) element of  $\Discbar_{d}$.  We require that these data vary smoothly over the compactified moduli space and that their restrictions to a boundary stratum be conformally equivalent to those coming from lower dimensional moduli spaces.  Finally,  near a boundary stratum the Floer data should agree to infinite order in the coordinates \eqref{eq:chart_corner_moduli_space} with the data obtained by gluing.
\end{defin}
Given a fixed generic universal and conformally consistent choice of  Floer data $\bfD_{\mu}$, we define a map
\begin{equation*}
  \mu_d \co  CW^{*}_{b}(\Tq)^{\otimes d}  \to   CW^{*}_{b}(\Tq)
\end{equation*}
using the moduli spaces $\Disc_{d}(x_0, \vx)$  of solutions $u$ to Equation \eqref{eq:dbar_disc_1_output} on a disc $S \in \Discbar_{d}$ with respect to $\bfD_{\mu}$, with boundary condition $\Tq$, and which converge to $x_0$ at the negative end, and to  $\vx=\{ x_1, \ldots, x_d\}$ at the positive ends.   As we briefly recall  in Appendix \ref{sec:signs-orientations} from \cite{string-top} the choices of relative $\Spin$ structures determine an isomorphism
\begin{equation} \label{eq:isomorphism_disc}
 \lambda( \Disc_{d}(x, \vx) )  \otimes o_{x_d} \otimes \cdots \otimes o_{x_1} \cong  \lambda( \Disc_{d}) \otimes o_{x_0},
\end{equation}
where $\lambda$ stands for the top exterior power of the tangent bundle.  Whenever $|x| =  2 - d + \sum_{1 \leq k \leq d} |x_k| $, the moduli space $ \Disc_{d}(x; \vx)   $  is rigid.  In particular, we obtain an isomorphism
\begin{equation*}
    o_{x_d} \otimes \cdots \otimes o_{x_1} \to o_{x}
\end{equation*}
from an orientation of $  \Disc_{d} $.  Our orientation on $\Disc_{d}$, following Section (12g) of \cite{seidel-book}, uses its identification with the configuration space of $d-2$ points $\xi^{3}, \ldots, \xi^{d}$ on an interval.   We let $\mu_u$ denote the map induced on orientation lines, and define 
\begin{equation}
 \mu_{d}([x_d], \ldots, [x_1])  = \sum_{u \in \Disc_{d}(x; \vx) } (-1)^{\dagger} \mu_{u}( [x_d], \ldots, [x_1])
\end{equation}
where the sign is given by
\begin{equation} \label{eq:dagger_sign}
 \dagger = \sum_{k=1}^{d} k |x_k|.
\end{equation}

\begin{example}
On the wrapped Floer complex of a cotangent fibre in $T^*S^1$, the higher products $\mu_d$ vanish if $d \geq 3$ because they have degree $2-d$, while all the generators have degree $0$.   It is unfortunately inconvenient to see the product $\mu_2$ if we think of the Floer complex as generated by chords.  However, using the equivalent model where the Floer complex is generated by intersection points between a cotangent fibre and its image under the time-$1$ Hamiltonian flow $\phi$ of $H$, one may express $\mu_2$ as a product 
\begin{equation} \label{eq:product_apply_phi}
CF^{*}(\phi( \Tq) ,  \Tq) \otimes   CF^*(\phi^{2} (\Tq), \phi( \Tq)) \to   CF^*(\phi^2 (\Tq), \Tq) ,
\end{equation}
which, in favourable circumstances, can be obtained by counting rigid \emph{holomorphic} curves.  In the case of $T^*S^1$,  Figure \ref{fig:cylinder-wrap-product} shows the image of the cotangent fibre under $\phi$ and $\phi^2$, as well as the disc which proves that $ \mu_{2}(1,x) = x $ (note that in the picture, the labels appear to be ordered clockwise; this is an artefact of our choice of symplectic form on the cotangent bundle).
\end{example}
\begin{figure}
  \centering
  \includegraphics{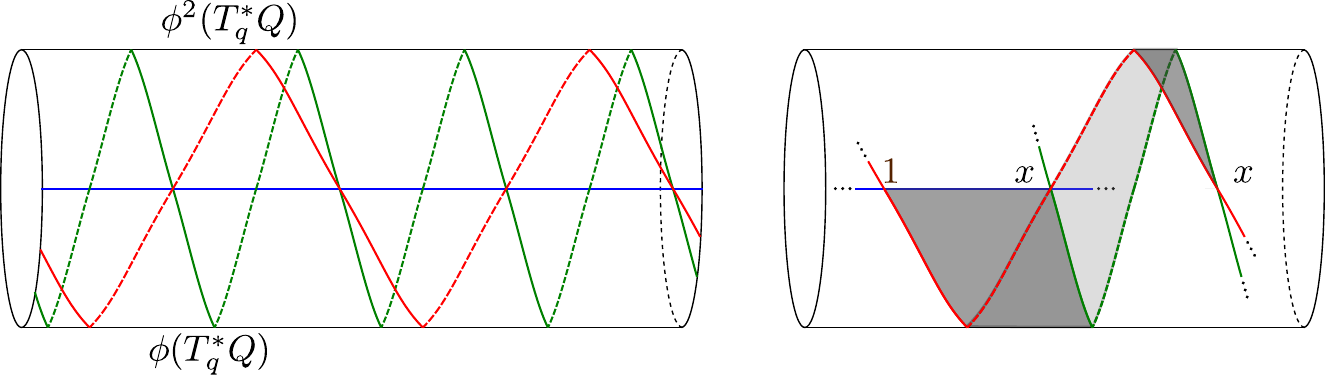}
  \caption{ }
  \label{fig:cylinder-wrap-product}
\end{figure}

\subsection{The moduli space of half-discs} \label{sec:moduli-space-half}
In this section, we shall define the moduli spaces which give rise to the $A_{\infty}$ homomorphism $\cF$ discussed in the introduction.  This is essentially a review of the results from Section $4$ of \cite{string-top}, with a few simplifying features coming from the fact the cotangent fibre intersects the zero section at only one point.

Let $\Pil_{d}$  denote the moduli space of holomorphic discs with $d+2$ boundary punctures, of which $d$ successive ones $\{ \xi^1, \ldots, \xi^d \} $ are distinguished as incoming; the segment connecting the remaining marked points $\{ \xi^{-1}, \xi^0 \}  $ is called the outgoing segment.  We shall call an element of $\Pil_{d}$ a \emph{half-disc}.  We identify $\Pil_{0}$ with a point (equipped with a group of automorphisms isomorphic to $\bR$) corresponding to the moduli space of strips. In addition, we fix an orientation on the moduli space $\Pil_{d}$ using the conventions for Stasheff polyhedra and the isomorphism
\begin{equation}\label{eq:half_discs_are_discs} \Pil_{d} \cong \Disc_{d+1}  \end{equation}
taking the incoming marked points on the source to the first $d$ incoming marked points on the target. 

The Deligne-Mumford compactification $ \Pilbar_{d}$ is simply a copy of  $\Discbar_{d+1}$, but the operadic structure maps associated to the boundary strata are different. If breaking takes place away from the outgoing segment, the domain is determined by sequences $\{ 1 , \ldots, d_1 \}$ and $\{ 1, \ldots, d_2\}$, such that $d_1 +d_2 = d+1$, and a fixed element $k$ in the first sequence.  By gluing the outgoing end of an element of $\Discbar_{d_2}$ to the $k+1$\st incoming end of a half disc, we obtain a map 
\begin{equation}\label{eq:half_popsicle_break_top}\Pilbar_{d_1} \times \Discbar_{d_2} \to \Pilbar_{d}  \qquad d_2=d-d_1+1. \end{equation}
If breaking occurs on the outgoing segment, it is determined by a partition $\{1, \ldots, d \} = \{1, \ldots, d_1 \} \cup \{ d_1 +1, \ldots, d \}$.  By gluing the $0$\th end of a half-disc with $d_2=d-d_1 $ inputs with the end labelled $\xi_{-1}$ of a half-disc with $d_1$ inputs, we obtain a map 
\begin{equation}\label{eq:half_popsicle_break_middle} \Pilbar_{d_1} \times  \Pilbar_{d_2} \to \Pilbar_{d}  \qquad d_2=d-d_1.  \end{equation}
The constant $1$ appears in Equation \eqref{eq:half_popsicle_break_top} but not in \eqref{eq:half_popsicle_break_middle} because the label records the number of incoming points on the boundary; our conventions are such that neither of the two marked points on the outgoing arc of an element of $ \Pilbar_{d}$ is incoming.

The $A_{\infty}$ homomorphism $\cF$ uses moduli spaces of solutions to a family of equations parametrised by $\Pilbar_{d}$.  Note that the isomorphism \eqref{eq:half_discs_are_discs}, and the choice of strip-like ends on elements of $\Discbar_{d+1}$, equips an element of $\Pilbar_{d}$  with strip-like ends near each puncture: the $d$ incoming ends as well as $\xi^{-1}$ acquire positive ends, while we have a negative end near $\xi^{0}$.

Let us also, in addition to the function $H$ chosen earlier, fix 
\begin{equation}
  \parbox{36em}{a function $G \in \cH(\TQ)$ which vanishes identically near the zero section. }
\end{equation}
\begin{defin} \label{def:floer_datum_half_popsicle}
A  \emph{Floer datum} $D_{T}$ on a stable disc $T \in \Pilbar_{d}$  consists of the following choices on each component:
\begin{enumerate}
\item Time shifting map:  A map $ \rho_{T} \co \partial T \to [1,+ \infty)$ which is constant near each end and is equal to $1$ on the outgoing segment.  We write $w_{k,T}$ for the value on the $k$\th end. 
\item Basic $1$-form:  A closed $1$-form $\alpha_{T}$  whose restriction to the complement of the outgoing segment in $\partial T$ and to a neighbourhood of  $\xi^0$ and $\xi^{-1}$  vanishes, and whose pullback under  the $k$\th end agrees with  $ w_{k,T} d \tau $.
\item Hamiltonian perturbation: A map $H_{T} \co T \to \sH(\TQ)$  on each surface such that the restriction of $H_{T}$ to a neighbourhood of the outgoing boundary segment agrees with $G$.  We write  $X_{T}$  for the Hamiltonian flow of $H_T$ and assume in addition that the pullback of $ X_{T} \otimes \alpha_{T} $ under the $k$\th end agrees with  $  X_{\frac{H}{w_{k,T}} \circ \psi^{w_{k,T}}} \otimes dt$ if $1 \leq k \leq d$.
\item Almost complex structures:  A map $I_{T} \co T \to \sJ(\TQ)$ whose pullback under the $k$\th end agrees with  $ (\psi^{w_{k,T}})^{*} I _t$.
\end{enumerate}

A   \emph{universal and conformally consistent}  choice of Floer data for the homomorphism $\cF$  is a choice $\bfD_{\cF}$  of such Floer data  for every integer $d \geq 1$ and every (representative) of an element of  $\Pilbar_{d}$ which varies smoothly over this compactified moduli space.   The restriction of $ \bfD_{\cF} $   to a boundary stratum should be conformally equivalent to the product of Floer data coming from either $\bfD_{\mu}$ or a lower dimensional moduli space  $\Pilbar_{d}$, and,  near such a boundary stratum,  should  agree to infinite order with the Floer data obtained by gluing.
\end{defin}
The stratification of the boundary of   $\Pilbar_{d}$ gives a procedure for constructing Floer data inductively.  The choice on the unique point $T_1 \in \Pilbar_{1}$ is subject only to the constraints of the first half Definition \ref{def:floer_datum_half_popsicle}.    Having fixed such data, gluing two curves in $\Pilbar_{1}$ defines Floer data on a neighbourhood of one of the boundary strata of $\Pilbar_{2}$, while gluing the  data for $T_1$ to the result of rescaling  the restriction of $\bfD_{\mu}$ to $\Discbar_{2}$ by $w_{1,T_1}^{-1}$ defines data near the other boundary component.  We choose perturbations of these two glued data which vanish to infinite order at the boundary, then extend these choices to the rest of the moduli space $\Pilbar_{2}$.  These steps are then repeated for every integer $d \geq 2$.

Let us now fix a collection $\vx = \{ x_1, \ldots, x_d \}$ of chords with boundary on $\Tq$, and define $\Pil(q,\vx,q) $ to be the moduli space of finite energy maps 
\begin{equation*}
  u \co T \to \TQ
\end{equation*}
for an arbitrary element $T$ of $\Pil_{d}$, with the outgoing segment mapping to $Q$, all other components mapping to $\Tq$, asymptotic conditions $\vx$ along the incoming ends, and satisfying the differential equation
\begin{equation}
  \label{eq:dbar_half_discs}
  \left(du - X_{T} \otimes \alpha_{T}\right)^{0,1} = 0
\end{equation}
with respect to the $T$-dependent almost complex structure $I_{T}$.
\begin{lem} \label{lem:moduli_half_discs_manifold_corners}
 For generic data  $\bfD_{\cF}$,  the moduli space $\Pil(q,\vx,q) $ is a smooth manifold of  dimension 
  \begin{equation}
   d-1 -  \sum |x_i|
  \end{equation}
whose Gromov bordification is a compact manifold with boundary.  The boundary is covered by the closures of the codimension $1$ strata
\begin{equation} \label{eq:moduli_space_maps_breaking_side}
  \Pil(q,\vx[1],q)  \times   \Pil(q,\vx[2],q) 
\end{equation}
for a partition $ \vx[1] = \{ x_1, \ldots, x_{d_1} \}$ and  $\vx[2] = \{x_{d_1+1}, \ldots, x_d  \} $, and
\begin{equation} \label{eq:moduli_space_maps_breaking_top}
  \Pil(q,\vx[1],q)  \times   \Disc(x;\vx[2]) 
\end{equation}
where $x$ is one of the elements of $\vx[1]$, and $\vx$ is obtained by replacing this element by the sequence $\vx[2]$.
\end{lem}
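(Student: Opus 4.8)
The plan is to follow the standard template for moduli spaces of solutions to a domain-dependent Cauchy--Riemann equation, importing the core analytic estimates from Section 4 of \cite{string-top} and focusing the remaining effort on the combinatorics of the compactification. First I would set up the Fredholm theory: for a fixed domain $T \in \Pil_{d}$ and fixed asymptotics $\vx$, linearising Equation \eqref{eq:dbar_half_discs} at a solution $u$ produces a Fredholm operator $D_{u}$ which, through the identification $\Pil_{d} \cong \Disc_{d+1}$ of \eqref{eq:half_discs_are_discs}, is a deformation through Fredholm operators of the elliptic operator attached to the domain together with the operators $D_{x_{i}}$ at the incoming ends and $D_{q}$ at the two endpoints $\xi^{-1}, \xi^{0}$ of the outgoing arc. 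A routine index computation, using the Maslov indices $|x_{i}|$ and the determinant lines $o_{x_{i}}$, $o_{q}$ fixed above, then gives $\operatorname{ind} D_{u} = - \sum |x_{i}|$; letting $T$ vary over the $(d-1)$-dimensional space $\Pil_{d}$ yields the expected dimension $d - 1 - \sum |x_{i}|$.

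Next I would establish transversality. The almost complex and Hamiltonian parts of a universal and conformally consistent Floer datum $\bfD_{\cF}$ may be varied freely away from the strip-like ends and away from the outgoing segment, where they are pinned to the fixed data ($G$, respectively the rescaled $I_{t}$); this leaves enough room for a Sard--Smale argument showing that the universal moduli space, over all $T$ and all $u$, is regular, so that for generic $\bfD_{\cF}$ the space $\Pil(q,\vx,q)$ is a smooth manifold of the dimension computed above. This argument is carried out in \cite{string-top}.

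For the bordification I would first note that $\omega = d\lambda$ is exact and that $\lambda$ restricts to zero on both $\Tq$ and $Q$, so there are no non-constant holomorphic spheres or discs and hence no bubbling; combined with the a priori energy bound and the $C^{0}$-estimate coming from the contact-type condition $\lambda \circ J = dr$ at infinity and the form $|p|^{2}$ of the Hamiltonians there, via the maximum principle of \cite{seidel-book}, Gromov's theorem leaves only strip-breaking, so the Gromov bordification is compact. I would then match the codimension-one broken configurations with the operadic structure maps of $\Pilbar_{d}$: a breaking that separates $\xi^{-1}$ from $\xi^{0}$ produces a pair of half-discs glued along the outgoing arc, as in \eqref{eq:half_popsicle_break_middle}, hence the stratum \eqref{eq:moduli_space_maps_breaking_side}; a breaking that keeps $\xi^{-1}$ and $\xi^{0}$ together pinches off a plain disc carrying a consecutive block $\vx[2]$ of inputs, whose output chord becomes an internal matching chord on the half-disc and whose Floer datum is the rescaled restriction of $\bfD_{\mu}$, as in \eqref{eq:half_popsicle_break_top}, hence the stratum \eqref{eq:moduli_space_maps_breaking_top}. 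The gluing theorem, together with the requirement that $\bfD_{\cF}$ agree to infinite order with the glued data near the boundary, realises every such configuration as a genuine boundary point and makes the gluing maps collar embeddings, so these strata exhaust the codimension-one boundary (the higher-codimension strata being the corners).

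The step I expect to be the main obstacle is compactness in the non-compact total space $\TQ$: the $C^{0}$/energy estimate ensuring that no sequence of curves escapes to infinity, so that Gromov compactness applies at all. The only other point requiring care is bookkeeping rather than hard analysis, namely checking that the abstract boundary faces of $\Pilbar_{d}$ recorded in \eqref{eq:half_popsicle_break_top}--\eqref{eq:half_popsicle_break_middle} really correspond to the broken-map limits equipped with their glued Floer data. Both points are established in \cite{string-top}, so in the present paper this lemma is essentially a matter of citing and reassembling those results.
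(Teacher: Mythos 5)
Your overall plan matches the paper's: transversality via Sard--Smale, compactness via a $C^0$ estimate plus exactness to rule out bubbling, and identification of the codimension-one boundary strata with the two operadic degenerations \eqref{eq:half_popsicle_break_middle} and \eqref{eq:half_popsicle_break_top}. You correctly flag compactness in the non-compact total space as the crux.

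The one place you should tighten is precisely there, and the citation you reach for (``the maximum principle of \cite{seidel-book}'') is not the mechanism the paper actually uses, nor is it obviously applicable as stated. The domain here carries mixed Lagrangian boundary conditions: the outgoing segment maps to the compact zero section $Q$, while the other boundary arcs map to the non-compact fibre $\Tq$, and the basic $1$-form $\alpha_T$ of Definition \ref{def:floer_datum_half_popsicle} is \emph{not} required to vanish on the outgoing segment. The standard radial maximum principle does not directly address this configuration. The paper's argument is more structural: choose $r$ large enough that none of the asymptotic chords $\vx$ enters $\SQ\times[r,\infty)$ and set $T' = u^{-1}\bigl(\SQ\times[r,\infty)\bigr)$. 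Because the outgoing segment is mapped to $Q$, which is disjoint from $\SQ\times[r,\infty)$, every Lagrangian boundary arc of $T'$ lies in the complement of the outgoing segment, where $\alpha_T$ is required to vanish. This is exactly the hypothesis of the integrated maximum principle, Lemma A.1 of \cite{generation}, which then forces $u|T'$ to be constant, giving confinement; Gromov compactness does the rest. So the step is correct in your proposal modulo replacing a hand-wave by this two-line observation, whose key geometric input (the outgoing segment stays in the compact region, so the troublesome part of $\alpha_T$ never sees the non-compact end) you should spell out explicitly.
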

\begin{proof}
Transversality is a standard consequence of the Sard-Smale argument.  To prove compactness, choose a positive real number $r$ sufficiently large that no element of $\vx$ intersects  $\SQ \times [r,+\infty)$, and let $T'$ denote the inverse image of this region under an element of $\Pil(q,\vx,q) $.  Since the outgoing boundary segment is mapped to the zero section which is disjoint from $\SQ \times [r,+\infty)$, the restriction of $\alpha_T$ to $T'$ vanishes on all the boundary components with Lagrangian labels.  In particular, the hypothesis of  Lemma A.1 in  \cite{generation} holds, so that $u|T'$ is constant.  The result now follows from the standard methods of Gromov compactness.
\end{proof}

\begin{example}
 On $T^* S^1$ the moduli spaces $\Pil(q,\vx,q) $ can only be rigid whenever $\vx$ is a sequence with exactly one element.  One may choose the Floer data so that $\Pil(q,x^{i},q)$ consists of exactly one element for each chord.  If $|i| > 1$, then the corresponding curve multiply covers some part of $ T^* S^1 $, but for $x$ and $x^{-1}$, the image of the curve is an annulus, which is cut by the cotangent fibre into a rectangle (see Figure \ref{fig:cylinders_half_discs}).  
\end{example}
\begin{figure}
  \centering
  \includegraphics{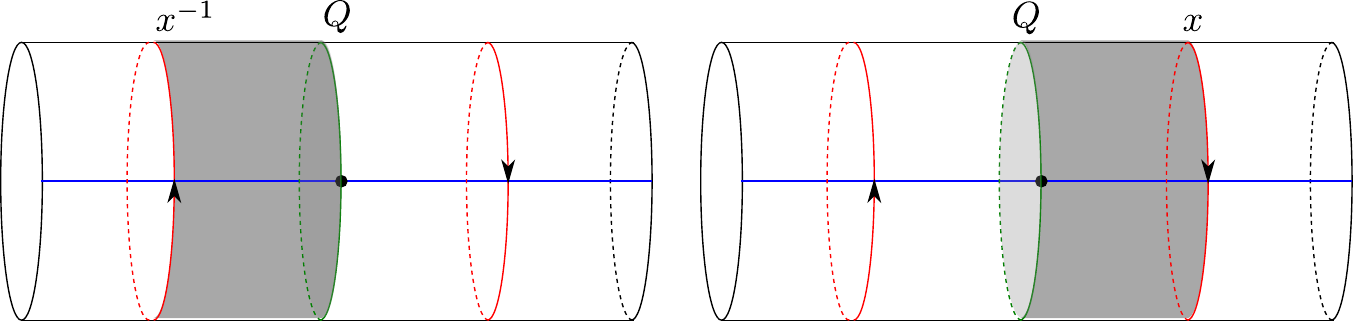}
  \caption{ }
  \label{fig:cylinders_half_discs}
\end{figure}

\subsection{The $A_{\infty}$ homomorphism} \label{sec:a_infty-homomorphism}
Given an element  $u \in \Pilbar(q, \vx, q)$, we obtain a path with endpoints on $q$ by considering the image of the outgoing segment starting at $\xi^0$ and ending on $\xi^{-1}$.  There is of course an ambiguity of parametrisation since the group of self-homeomorphisms of an interval acts on this space.  Using the parametrisation by arc length, we may compatibly eliminate this ambiguity:
\begin{lem}
There exists a choice of parametrisations of the outgoing boundary segment of half discs which yields maps
\begin{equation} \Pilbar(q, \vx, q) \to \Omega(q) \end{equation}
such that whenever $\vx[1]$, $\vx[2]$ and $x$ are as in Equation \eqref{eq:moduli_space_maps_breaking_top} we have a commutative diagram
\begin{equation*}
 \xymatrix{  \Pilbar (q, \vx[1], q ) \times  \Discbar(x;\vx[2]) \ar[r] \ar[d] &  \Pilbar(q, \vx, q)  \ar[d]  \\
 \Pilbar (q, \vx[1], q) \ar[r] &   \Omega(q) } \end{equation*}
while whenever  $\vx[1]$ and $\vx[2]$ are as in Equation \eqref{eq:moduli_space_maps_breaking_side}, the following diagram also commutes
\begin{equation*}
 \xymatrix{     \Pilbar( q, \vx[1], q) \times \Pilbar( q, \vx[2], q) \ar[r] \ar[d] & \Pilbar(q, \vx, q)  \ar[d] \\
   \Omega(q) \times  \Omega(q) \ar[r]  & \Omega(q) . }
\end{equation*} \noproof
\end{lem}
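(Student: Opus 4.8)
The approach is to construct the evaluation map explicitly and then extract both commutativity statements from the description of the boundary gluing maps \eqref{eq:half_popsicle_break_top} and \eqref{eq:half_popsicle_break_middle}. Concretely: given $u \in \Pil(q,\vx,q)$, take the image under $u$ of the outgoing boundary segment, traversed from $\xi^{0}$ to $\xi^{-1}$, and reparametrise it by arc length with respect to the fixed Riemannian metric on $Q$; this is also the parametrisation of the outgoing segment of the domain determined by $u$, and it is manifestly insensitive to precomposing with a self-homeomorphism of the interval, so it descends to the moduli space.

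The first thing to check is that this really lands in $\Omega(q)$. The outgoing segment maps into the zero section $Q$ by the boundary condition, and since $H_{T}$ coincides with $G$ -- which vanishes near $Q$ -- on a neighbourhood of this segment while $\alpha_{T}$ vanishes near $\xi^{0}$ and $\xi^{-1}$, equation \eqref{eq:dbar_half_discs} degenerates there to an honest $\dbar$-equation with moving boundary on $Q$ and with boundary on $\Tq$ near the two punctures; consequently $u$ converges exponentially, at both $\xi^{0}$ and $\xi^{-1}$, to the unique point of $\Tq \cap Q$, namely $q$. Thus the image path is a loop based at $q$, smooth on the interior and with exponentially decaying derivative at its ends, hence of finite length, so its arc-length parametrisation is a well-defined Moore loop. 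Continuity over the open stratum $\Pil_{d}$ follows from the uniform exponential estimates, and continuity extends to $\Pilbar_{d}$ once one knows that the portion of the outgoing path lying in a long neck has length tending to $0$; this is again a consequence of the exponential decay along the strip-like ends being glued, i.e.\ of the compactness input already used in Lemma \ref{lem:moduli_half_discs_manifold_corners}.

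Granting continuity, both diagrams are now essentially formal. In the stratum \eqref{eq:moduli_space_maps_breaking_top}, obtained by gluing a disc onto an incoming end via \eqref{eq:half_popsicle_break_top}, the node is disjoint from the outgoing segment, so the outgoing segment of the nodal curve coincides with that of the half-disc factor $u_{1}\in \Pilbar(q,\vx[1],q)$ and $u$ restricts to $u_{1}$ there; both composites in the first square therefore return the loop of $u_{1}$. In the stratum \eqref{eq:moduli_space_maps_breaking_side}, the gluing \eqref{eq:half_popsicle_break_middle} identifies the $\xi^{0}$-end of the $\vx[2]$-labelled half-disc with the $\xi^{-1}$-end of the $\vx[1]$-labelled half-disc, and by the previous paragraph both of these ends asymptote to $q$; hence the outgoing segment of the nodal curve is the outgoing segment of $u_{1}$ followed by that of $u_{2}$, each a loop based at $q$. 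Since the total arc length of a concatenation is the sum and arc-length reparametrisation intertwines concatenation of paths with multiplication of Moore loops, the loop attached to the nodal curve is exactly the product in $\Omega(q)$ of the loops of $u_{1}$ and $u_{2}$, taken in the order dictated by \eqref{eq:half_popsicle_break_middle}; that is the content of the second square.

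I expect the only genuine -- though still routine -- point to be the continuity at the boundary of $\Pilbar_{d}$, namely the control of the neck contribution to the arc length; the combinatorics of which arc is outgoing and in which order the two outgoing segments concatenate under \eqref{eq:half_popsicle_break_middle} also has to be matched carefully to the paper's conventions, but that is bookkeeping. All analytic ingredients needed are already those behind Lemma \ref{lem:moduli_half_discs_manifold_corners}.
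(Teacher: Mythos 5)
Your proposal is correct and follows exactly the route the paper intends: the lemma is stated with no proof, but the sentence immediately preceding it says the ambiguity of parametrisation is to be eliminated by using arc length, which is precisely your construction, and the two diagram-commutativity statements are, as you say, formal consequences of the gluing descriptions in \eqref{eq:half_popsicle_break_top} and \eqref{eq:half_popsicle_break_middle} once one knows the outgoing segment asymptotes to $q$ at both ends.
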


In particular, we obtain an evaluation map
\begin{equation*}
   C_{*}(\Pilbar(q, \vx, q)  )  \to  C_{*}(\Omega(q)).
\end{equation*}
According to Lemma \ref{lem:moduli_half_discs_manifold_corners}, the moduli spaces $ \Pilbar(q, \vx, q)  $ are manifolds with corners; by the construction explained in Appendix \ref{sec:signs-orientations}, we have a canonical up to homotopy isomorphism
\begin{equation} \label{eq:orientation_bundles_half-discs}
\lambda(\Pil(q,\vx,q)) \cong \lambda(\Pil_{d})  \otimes o_{q} \otimes o_{x_1}^{-1} \otimes \cdots \otimes  o_{x_d}^{-1} \otimes o_{q}^{-1}.
\end{equation}
In particular, these manifolds are orientable and hence admit a fundamental chain whose boundary represents $\partial  \Pilbar(q, \vx, q) $ once we fix orientations of $o_{x}$ for all chords.  The next result is a restatement of Lemma 4.14 of \cite{string-top}, with the signs verified in Appendix A of  \cite{string-top}.
\begin{lem} \label{lem:good_choice_fundamental}
There exists a family of fundamental chains 
\begin{equation} [\Pilbar(q, \vx, q)] \in C_{*}(\Pilbar(q, \vx, q)  )  \end{equation}
in the cubical chain complex whose boundary is given by
\begin{equation}  \label{eq:boundary_fundamental_chain}
  \sum_{\vx[1] \cup \vx[2]= \vx} (-1)^{\flat }   [\Pilbar( q, \vx[1], q)] \times  [\Pilbar( q, \vx[2], q)]  + \sum_{\vx[1] - \{x\} \cup \vx[2] = \vx } (-1)^{\sharp} [\Pilbar(q, \vx[1], q)]  \times [\Discbar(x;\vx[2])] 
\end{equation}
where the first sign is given by
\begin{equation}  \label{eq:sign_flat} \flat = (d_2+1)\left( \sum_{i=1}^{d_1} |x_{i}| \right) + d_1 +1 , \end{equation}
and the second sign is 
\begin{equation} \label{eq:sign_sharp} \sharp=  d_2 \left( \sum_{j=1}^{k+d_2} |x_j|  \right) + d_{2}(d-k) + k +1   \end{equation}
whenever $\Discbar(x;\vx[2]) $  is rigid  and  $x$ is the $k+1$\st element of $\vx[1]$. \noproof
\end{lem}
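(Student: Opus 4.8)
The plan is to construct the chains $[\Pilbar(q,\vx,q)]$ by induction on the dimension $n = d-1-\sum_i |x_i|$ of the moduli space, carried out over all input sequences $\vx$ at once. When $n=0$ the space $\Pilbar(q,\vx,q)$ is a finite set of points, each of which acquires a sign once orientations of all the determinant lines $o_x$ have been fixed through the isomorphism \eqref{eq:orientation_bundles_half-discs}; I would take $[\Pilbar(q,\vx,q)]$ to be the resulting signed $0$-chain, and \eqref{eq:boundary_fundamental_chain} holds trivially because there are no codimension-one strata. For the inductive step, suppose compatible fundamental chains have been chosen for every moduli space of the form $\Pilbar(q,\vx,q)$ of dimension $<n$, as well as for the disc moduli spaces $\Discbar(x;\vy)$ appearing (the rigid ones being the fixed signed point counts underlying the products $\mu_d$, the positive-dimensional ones chosen as in \cite{seidel-book}). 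By Lemma \ref{lem:moduli_half_discs_manifold_corners} every codimension-one boundary stratum of $\Pilbar(q,\vx,q)$ is a product of two moduli spaces of strictly smaller dimension, so I would form the cubical chain $S$, supported on $\partial\Pilbar(q,\vx,q)$, equal to the right-hand side of \eqref{eq:boundary_fundamental_chain}, built out of the chains fixed at the earlier stages of the induction.

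Two points then need to be checked. First, that $S$ is a cycle: expanding $\partial S$ by the Leibniz rule and substituting the inductive form \eqref{eq:boundary_fundamental_chain} of the boundary of each factor produces a sum of triple products, each supported on a codimension-two stratum of $\Pilbar(q,\vx,q)$; by the associativity of the operadic maps \eqref{eq:half_popsicle_break_top}--\eqref{eq:half_popsicle_break_middle} together with the gluing maps for the disc moduli, each such stratum is reached in exactly two ways, and one verifies the two occurrences carry opposite signs --- this is the chain-level $A_\infty$-homomorphism relation. Second, that $S$ represents the fundamental class of $\partial\Pilbar(q,\vx,q)$: fixing the orientation of $\Pilbar(q,\vx,q)$ coming from \eqref{eq:orientation_bundles_half-discs}, one compares, stratum by stratum, the product orientation that \eqref{eq:orientation_bundles_half-discs} and its disc analogue induce on each codimension-one face with the boundary orientation that face inherits from $\Pilbar(q,\vx,q)$, and checks the discrepancy is precisely $(-1)^{\flat}$, respectively $(-1)^{\sharp}$, as recorded in \eqref{eq:sign_flat}--\eqref{eq:sign_sharp}. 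Granting these two points, the existence of a cubical chain $[\Pilbar(q,\vx,q)]$ with $\partial[\Pilbar(q,\vx,q)] = S$ follows from the standard property, recalled in Appendix \ref{sec:cuboidal-chains}, that a compact oriented manifold with corners carries a fundamental chain in the cubical complex realising any given cycle that represents the fundamental class of its boundary (one starts from any relative fundamental chain and corrects it by a chain supported in the boundary). This closes the induction.

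The real work is entirely in the sign bookkeeping behind these verifications. One has to track the orientation of $\Pil_d$ through the isomorphism \eqref{eq:half_discs_are_discs} and the description of $\Disc_{d+1}$ as a configuration space of points on an interval, follow how it transforms under the two inequivalent gluing operations \eqref{eq:half_popsicle_break_top} (attaching a disc along an incoming end) and \eqref{eq:half_popsicle_break_middle} (concatenating two half-discs along the outgoing segment), and combine this with the Koszul signs produced by permuting the lines $o_q, o_{x_1},\ldots,o_{x_d}$ in \eqref{eq:orientation_bundles_half-discs} and with the gluing conventions for the operators $D_x$ recalled in Appendix \ref{sec:signs-orientations}. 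This is exactly the computation performed in Appendix A of \cite{string-top}, from which the specific expressions \eqref{eq:sign_flat} and \eqref{eq:sign_sharp} emerge; given the well-documented delicacy of such sign computations in this subject, I would treat it as the single substantive step and otherwise invoke the results of \cite{string-top} directly.
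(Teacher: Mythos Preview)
Your proposal is correct and in fact gives more than the paper does: the paper marks this lemma with \noproof\ and simply records it as a restatement of Lemma~4.14 of \cite{string-top}, with the sign formulae verified in Appendix~A of that reference. The inductive scheme you outline---building the boundary chain from lower-dimensional fundamental chains, checking it is a cycle in the right class, and then extending across the top stratum---is precisely the standard argument carried out in \cite{string-top}, and you correctly identify the sign comparison as the only substantive step and defer it to the same source.
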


We now define a map
\begin{align}
\cF^{d} \co (CW^{*}_{b}(L))^{\otimes d} & \to C_{*}(\Omega(q)) \\
x_d \otimes \cdots \otimes x_1 & \to \bigoplus_{q} (-1)^{\dagger+ d |\vx|} \ev_{*} ( [\Pilbar(q, \vx, q)]) , \label{eq:twist_A_infty_homomorphism}
\end{align}
where $|\vx|$ is the sum of the degrees of the inputs.
\begin{lem}[Lemma 4.15 of \cite{string-top}]
The collection of maps $\cF^{d}$ satisfy the $A_{\infty}$ equation for functors 
\begin{multline} \label{eq:a_infty_functor_equation} \sum_{d_1+d_2 = d+1} (-1)^{\maltese_{1}^{i}} \cF^{d}\left( x_{d}, \ldots, x_{i+d_2+1}, \mu^{\F}_{d_2}(x_{i+d_2}, \ldots, x_{i+1}) , x_{i},  \ldots, x_1 \right) = \\  \mu_{1}^{\P} \left( \cF^{d}( x_{d}, \ldots, x_1) \right) + \sum_{d_1 + d_2 = d} \mu_{2}^{\P}\left(\cF^{d_2}(x_{d}, \ldots, x_{d_1+1})  , \cF^{d_1}(  x_{d_1},  \ldots, x_1 ) \right),   \end{multline}
where the sign on the left hand side is given by
\begin{equation*}
  \maltese_{1}^{i} = i + \sum_{j = 1}^{i} |x_i|. 
\end{equation*}
\noproof
\end{lem}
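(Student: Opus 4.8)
The plan is to deduce the functor equation \eqref{eq:a_infty_functor_equation} from the boundary formula \eqref{eq:boundary_fundamental_chain} of Lemma \ref{lem:good_choice_fundamental} by pushing it forward under the evaluation map. Once one has that formula and the compatibility of $\ev$ with the operadic structure maps, the argument is purely formal; the genuine content is the sign count, which is the computation carried out in Appendix A of \cite{string-top} and which I would reproduce only in its decisive steps.

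First I would assemble the ingredients. By Lemma \ref{lem:moduli_half_discs_manifold_corners} the spaces $\Pilbar(q,\vx,q)$ are compact manifolds with corners, so they carry the fundamental chains of Lemma \ref{lem:good_choice_fundamental}, and the maps $\ev\co\Pilbar(q,\vx,q)\to\Omega(q)$ constructed above fit into the two commutative squares displayed there: a top-breaking \eqref{eq:moduli_space_maps_breaking_top} is seen by $\ev$ only through the half-disc factor $\Pilbar(q,\vx[1],q)$, while a side-breaking \eqref{eq:moduli_space_maps_breaking_side} is carried by $\ev$ to the concatenation map $\Omega(q)\times\Omega(q)\to\Omega(q)$. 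Passing to cubical chains, $\ev_{*}$ is a chain map, concatenation induces the Pontryagin product, which by the conventions of Section \ref{sec:open-sector} agrees with $\mu^{\P}_{2}$ up to the factor $(-1)^{|\sigma_{1}|}$, and $\mu^{\P}_{1}=\partial$. Finally, pushforward under the projection $\Pilbar(q,\vx[1],q)\times\Discbar(x;\vx[2])\to\Pilbar(q,\vx[1],q)$ sends a product chain to a degenerate one — hence to zero in the normalised complex — unless the $\Discbar(x;\vx[2])$-factor is zero-dimensional, in which case it multiplies the chain on $\Pilbar(q,\vx[1],q)$ by the signed cardinality of $\Discbar(x;\vx[2])$, which is precisely the relevant matrix coefficient of $\mu^{\F}_{d_{2}}$.

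Next I would write $\mu^{\P}_{1}\bigl(\cF^{d}(x_{d},\dots,x_{1})\bigr)=\partial\,\ev_{*}[\Pilbar(q,\vx,q)]=\ev_{*}\partial[\Pilbar(q,\vx,q)]$ and substitute \eqref{eq:boundary_fundamental_chain}. The first sum there, under the side-breaking square, becomes $\sum \mu^{\P}_{2}\bigl(\cF^{d_{2}}(x_{d},\dots,x_{d_{1}+1}),\cF^{d_{1}}(x_{d_{1}},\dots,x_{1})\bigr)$ up to sign; the second sum, under the top-breaking square and the projection formula above, becomes, up to sign, the sum of the terms $\cF\bigl(x_{d},\dots,\mu^{\F}_{d_{2}}(x_{i+d_{2}},\dots,x_{i+1}),\dots,x_{1}\bigr)$, only the rigid $\Discbar(x;\vx[2])$ surviving. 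This already exhibits \eqref{eq:a_infty_functor_equation} up to signs, with the term $\mu^{\P}_{1}\bigl(\cF^{d}(\dots)\bigr)$ produced tautologically by $\mu^{\P}_{1}=\partial$ and the normalisation $(-1)^{\dagger+d|\vx|}$ of \eqref{eq:twist_A_infty_homomorphism}.

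The main obstacle is the sign reconciliation. One must match $\flat$ and $\sharp$ of \eqref{eq:sign_flat}--\eqref{eq:sign_sharp}, the normalisation $(-1)^{\dagger+d|\vx|}$ of \eqref{eq:twist_A_infty_homomorphism} carried along each term, the factor $(-1)^{|\sigma_{1}|}$ concealed in $\mu^{\P}_{2}$, and the Koszul signs produced when $\ev_{*}$ and $\partial$ are commuted past the product decompositions, against the sign $\maltese^{i}_{1}$ and the degree shifts appearing in \eqref{eq:a_infty_functor_equation}. I expect the decisive cancellations to come from reindexing $\sum_{k} k|x_{k}|$ across a splitting $\vx=\vx[1]\cup\vx[2]$, from the parities of $d_{1}$ and $d_{2}$, and from the grading convention $|\sigma|=-\dim\sigma$ on $C_{-*}(\Omega_{q}Q)$ interacting with the dimension formula of Lemma \ref{lem:moduli_half_discs_manifold_corners}; these are exactly the verifications of Appendix A of \cite{string-top}, which I would follow.
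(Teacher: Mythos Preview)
Your approach is correct and is exactly the argument that proves this lemma: apply $\ev_{*}$ to the boundary formula \eqref{eq:boundary_fundamental_chain}, use the two commutative squares to identify the resulting terms with $\mu_{2}^{\P}(\cF^{d_{2}},\cF^{d_{1}})$ and $\cF^{d_{1}}(\ldots,\mu^{\F}_{d_{2}},\ldots)$ respectively, and then carry out the sign bookkeeping. Note that the present paper does not itself give a proof --- the statement is quoted verbatim as Lemma~4.15 of \cite{string-top} and marked with a bare \textsc{qed} --- so there is nothing here to compare against; your outline coincides with the argument in \cite{string-top}, including the deferral of the sign verification to its Appendix~A.
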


\section{The closed sector}
 \subsection{Construction of  (twisted) symplectic cohomology and the PSS homomorphism} \label{sec:constr-twist-sympl}
Let $F \co S^1 \times \TQ \to \bR$ be a smooth non-negative function such that
\begin{equation}
  \label{eq:support_condition_perturbation}
  \parbox{36em}{$F$ and  $  \lambda(X_F)$ are uniformly bounded in absolute value, and there is a sequence $R_i \to  +\infty$ such that $F(t, p,q) $ vanishes if $|p|$ lies in some open neighbourhood of $R_i$.}
\end{equation}

We write $ H_{S^1}$ for the sum of $H$ and $F$,   $X_{S^1}$ for the time-dependent Hamiltonian vector field of  $H_{S^1}$, and $\Orbit$ for the set of time-$1$ periodic orbits.  For a generic choice of $F$, all time-$1$ periodic orbits  of $ X_{S_1} $ are non-degenerate, and we define the degree of such an orbit in terms of the Conley-Zehnder index as
\begin{equation}
  \label{eq:formula_degree_CZ}
|y| = n - CZ(y) .
\end{equation}

To define the twisting of symplectic cohomology that corresponds to the fact that we shall work with the Fukaya category with respect to a non-trivial background class, consider the pullback of $E_{b}$ under an orbit.  This vector bundle over the circle admits two $\Spin$ structures because the space of such is an affine space over first cohomology with $\bZ_{2}$ coefficients, which has rank $1$.  Neither of these is preferred:
\begin{defin}
  The \emph{background line} $\kappa_{y}^{b}$ is the free abelian group generated by the two $\Spin$ structures on $y^{*}(E_b) $ with the relation that their sum vanishes. 
\end{defin}
\begin{rem}
Recall that the orientation line of a vector space is the free abelian vector space generated by its two orientations, with the relation that their sum vanishes.  Our background line is modeled after this more familiar notion, exploiting the fact that $\pi_{1}(O(n))$ is $\bZ_{2}$ whenever $n>2$.   Note that the definition makes sense for any class $b \in H^{2}(\TQ, \bZ_{2})$, not necessarily agreeing with the pullback of $w_{2}(Q)$.  Starting with a vector bundle on a manifold, this construction can be used to produce a canonical local system on the free loop space whose associated class in first cohomology is obtained by transgressing the second Stiefel-Whitney class of the bundle on the base.
\end{rem}

Given an $S^1$-dependent family  $I_{S^1} \in  \sJ(\TQ)$, we  write $\Cyl(y_0;y_1)$ for the quotient by $\bR$ of the moduli space of maps
\begin{equation*} u \co C \equiv (-\infty, +\infty) \times S^1 \to \TQ  \end{equation*}
converging exponentially at $+\infty$ to $y_0$ and at $-\infty$ to $y_1$, and  satisfying  Floer's equation
\begin{equation}
  \label{eq:dbar_cylinder}
 \left(du - X_{S^1}  \otimes  dt \right)^{0,1} = 0
\end{equation}
with respect to the $S^1$-dependent almost complex structure $I_{S^1} $.  The index theorem implies that $\Cyl(y_0;y_1)$ is $0$-dimensional whenever $|y_0| = |y_1| +1$, and that there are real lines $o_y$ (see Appendix C of \cite{generation}) associated to each periodic orbit such that every element of this moduli space induces, up to homotopy, a canonical isomorphism from $o_{y_1}$ to $o_{y_0}$ and hence a map on orientation lines.
\begin{equation}\label{eq:map_orientation_lines}
  |o_{y_1}| \to |o_{y_0}|.
\end{equation}
In brief, $o_{y}$ is the determinant bundle of a Cauchy-Riemann operator on $\bC$ whose asymptotic conditions at infinity are given by the linearisation of Equation \eqref{eq:dbar_cylinder} in a trivialisation of $y^{*}(\TQ)$ determined up to homotopy by the choice of a (complex) volume form on $\TQ$ fixed in Section \ref{sec:geom-prel}.

The exponential convergence of $u(s,t)$ to $y_0$ at $s=+\infty$ and $y_1$  at $s=-\infty$ implies that a $\Spin$ structure on  the pullback of $E_{b}$ under $y_1$ or $y_0$ induces one on the pullback of $E_{b}$ under $u$.  In particular, we also obtain a canonical isomorphism
\begin{equation}
  \label{eq:backround_line_twist}
\kappa_{y_1} \to \kappa_{y_0}.
\end{equation}

Writing $\partial_u$ for the tensor products of the maps in Equation \eqref{eq:map_orientation_lines} and \eqref{eq:backround_line_twist}, we define the symplectic chain complex 
\begin{align}
 \label{eq:symplectic_complex}
 SC^{i}_{b}(\TQ) & \equiv \bigoplus _{\substack{y \in \Orbit \\ |y| = i}} | o_y| \otimes \kappa_{y} \\
\partial([y_1]) & =  (-1)^{|y_1|} \sum_{u} \partial_u([y_1] ). \notag
\end{align}
The finiteness of the right hand side follows from Gromov compactness and a version of the maximum principle, and  the cohomology of this complex is called \emph{(twisted) symplectic cohomology} and denoted $SH^{*}_{b}(\TQ)$.  Since we only work with one twist of symplectic cohomology in this paper, we shall often refer to this simply as symplectic cohomology.

We shall now construct a map from the cohomology of $Q$ to symplectic cohomology.  There are alternative definitions of symplectic cohomology in which the complex is built from two parts, one generated by Reeb chords (or by Hamiltonian chords occurring away from a compact set), the other by critical points of a Morse function on $\TQ$.  From such a point of view, the existence of this map is obvious; we shall nonetheless avoid it because it would complicate the construction of various homomorphisms in and out of symplectic cohomology which will be used throughout the paper.  The presence of generators of different flavours would require a case-by-case analysis.

Instead, we shall use the work of  Piunikhin, Salamon, and Schwarz which constructs a chain equivalence between the Morse complex and the Floer chain complex in the case of compact manifolds, see \cite{PSS}.  Adapting their idea to our setting, we obtain a map
\begin{equation} \label{eq:PSS-map}
  H^*(\TQ) \to SH^{*}_{b}(\TQ)
\end{equation}
as follows:

Choose a $1$-form $\beta$ on $C=(-\infty, +\infty) \times S^1$ satisfying $d \beta \leq 0$ everywhere, which agrees with $dt$ near $-\infty$, and which vanishes near $+\infty$, as well as a family of almost complex structures $I_{PSS} \co C \to \sJ(\TQ)$ which agree with $I_t$ near $-\infty$, and which are independent of the source near $+\infty$.  These data allow us to impose the equation
  \begin{equation} \label{eq:PSS-equation}
    (du - X_{S^1} \otimes \beta)^{0,1}=0
  \end{equation}
on maps from the cylinder to $\TQ$.  Equation \eqref{eq:PSS-equation} reduces to the ordinary holomorphic curve equation for a constant almost complex structure near the positive end by our assumptions on $\beta$ and  $I_{PSS}$.  Finiteness of energy then implies that the map extends by adding a point at $+\infty$ (this is the removal of singularities theorem which in this case goes back to Gromov \cite{gromov}).  At $-\infty$, we obtain convergence to an orbit by a standard result in Floer theory.

Given a manifold $N$ with boundary equipped with a map to $ \TQ $ and an orbit of $H_{S^1}  $  we define $\Cyl(y,N) $  to be the space of solutions to Equation \eqref{eq:PSS-equation} which converge to $y $  at $-\infty$, and to a point in the image of $N$ at $+\infty$.  For a generic choice of $I_{PSS}$, this is a smooth manifold of dimension $\codim(N) - |y| $.   The Gromov bordification $\Cylbar(y,N) $ of this moduli space has two types of codimension $1$ strata:
\begin{equation*}
  \Cyl(y,\partial N) \cup \coprod_{|y_1|= |y|-1} \Cyl(y,y_1)   \times \Cyl(y_1,N)
\end{equation*}
corresponding to the point at $+\infty$ escaping to $\partial N$, and to the breaking of solutions to Floer's equation at $-\infty$.

The proof of compactness in Lemma \ref{lem:moduli_half_discs_manifold_corners} applies to this setting as well:
\begin{lem}
  If the map $N \to \TQ$ is proper, then   $\Cylbar(y,N) $  is compact. \noproof
\end{lem}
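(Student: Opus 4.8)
The plan is to adapt, essentially verbatim, the compactness argument already carried out for the moduli spaces $\Pil(q,\vx,q)$ in the proof of Lemma~\ref{lem:moduli_half_discs_manifold_corners}. The only issues that can destroy compactness of $\Cylbar(y,N)$ are (i) energy escaping to the non-compact end of $\TQ$, producing sphere or cylinder bubbles in the region $|p|\gg 0$, and (ii) energy escaping along $N$ itself. Issue (ii) is precisely what the properness hypothesis on $N\to\TQ$ is designed to exclude, via the usual Gromov-compactness argument applied to a sequence of marked points in $N$. So the substance is issue (i), and here the strategy is a maximum-principle/no-escape-to-infinity argument identical in spirit to the one invoked for half-discs.

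Concretely, I would first choose $r$ large enough that the orbit $y$ and the image of $N$ are both contained in $|p|<r$, and that $r$ avoids the sequence $R_i$ of Equation~\eqref{eq:support_condition_perturbation}, so the Hamiltonian perturbation is pure (equal to $|p|^2$ up to the support conditions) on $|p|\in[r,\infty)$. Let $C'$ denote the preimage of $\SQ\times[r,+\infty)$ under a solution $u$. The $1$-form $\beta$ satisfies $d\beta\le 0$, $I_{PSS}$ is of contact type at infinity, and on $C'$ the relevant Hamiltonian term is controlled; these are exactly the hypotheses of the no-escape lemma (the analogue of Lemma~A.1 of \cite{generation} used in the proof of Lemma~\ref{lem:moduli_half_discs_manifold_corners}), which forces $u|_{C'}$ to map into $|p|\le r$, i.e.\ no part of the curve wanders out to infinity. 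One subtlety relative to the half-disc case is that there is no Lagrangian boundary here to make $\alpha$ (here $\beta$) vanish; instead one uses that $\beta$ vanishes near $+\infty$ and agrees with $dt$ near $-\infty$, together with $d\beta\le0$, so the action/energy estimate still closes up.

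Once the image of every solution is confined to a fixed compact subset of $\TQ$, standard Gromov compactness applies: a sequence in $\Cyl(y,N)$ either converges, or degenerates by bubbling off holomorphic spheres, by breaking a Floer cylinder at $-\infty$ (the stratum $\Cyl(y,y_1)\times\Cyl(y_1,N)$), or by the marked point at $+\infty$ running into $\partial N$ (the stratum $\Cyl(y,\partial N)$). Sphere bubbling is excluded because $\TQ$ is exact (aspherical) and hence contains no non-constant holomorphic spheres. Therefore every limit lies in $\Cylbar(y,N)$ and the space is sequentially compact; being second countable, it is compact. I expect the main obstacle to be purely bookkeeping: verifying that the precise form of $\beta$ and the support conditions \eqref{eq:support_condition_perturbation} on $F$ genuinely put us in the scope of the no-escape lemma despite the absence of Lagrangian boundary conditions — but since this is exactly the mechanism already used (and cited) in the proof of Lemma~\ref{lem:moduli_half_discs_manifold_corners}, the hard analytic work is imported rather than redone, and the proof is short.
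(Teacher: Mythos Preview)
Your proposal is correct and follows exactly the approach the paper indicates: the paper gives no proof at all, merely remarking (in the sentence immediately preceding the lemma) that ``the proof of compactness in Lemma~\ref{lem:moduli_half_discs_manifold_corners} applies to this setting as well'' and then marking the lemma with \verb|\noproof|. You have supplied the details the paper omits, including the one genuine adaptation --- that the no-escape argument here relies on the vanishing of $\beta$ near $+\infty$ and the condition $d\beta\le 0$ rather than on Lagrangian boundary conditions --- which is a sensible and correct observation.
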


In particular, using the reader's favourite chain model for relative homology (the PSS isomorphism being usually phrased in terms of Morse chains), we obtain a chain map
\begin{equation*}
  H_{2n-*}(\TQ, \SQ) \to SH^{*}_{b}(\TQ)
\end{equation*}
by an appropriate count of those elements of $ \Cylbar(y,N) $ which are rigid; the PSS map in Equation \eqref{eq:PSS-map} is obtained by identifying the source with cohomology using Poincar\'e duality for manifolds with boundary.  More precisely, the PSS map defined, say in \cite{PSS} takes value in the untwisted version of symplectic cohomology.  To define it in the presence of a background class, we must be able to assign to every component of $ \Cylbar(y,N)  $ a trivialisation of $ \kappa_{y}^{b}$; i.e. a $\Spin$ structure on $y^{*}(E_{b})$.  

In order to do this, we exploit again the fact that any element $u \in  \Cylbar(y,N) $ extends continuously to a map
\begin{equation}
  \bar{u} \co \bC \to \TQ
\end{equation}
whose source the plane obtained by adding a point to the cylinder at $s=- \infty$.  We choose the $\Spin$ structure on $ y^{*}(E_{b})$ to be the unique one which extends to a $\Spin$ structure on $\bar{u}^{*}(E_{b})$.

\begin{example}
If we perturb the Hamiltonian $|p|^2$ on $T^* S^1$ by a $C^2$ small function then each Reeb orbit contributes two generators to   $SC^{i}(T^* S^1)$  in degrees $0$ and $1$ (there is no twist in this case so we drop $b$ from the notation).  As the Reeb orbits are in non-trivial homology classes, they cannot be in the image of the PSS homomorphism.  If we choose the perturbation to be autonomous (time-independent) in a neighbourhood of the zero section, then the critical points of the perturbed Hamiltonian give rise to generators in addition to the ones coming from Reeb chords; we may choose this perturbation so that there are two additional generator, again in degree $0$ and $1$  and the subspace generated by these is the image of the PSS homomorphism.  
\end{example}
\subsection{Moduli spaces of half-cylinders} \label{sec:count-half-cylind}
We define the space of Moore loops on $Q$ to be
\begin{equation*}
 \sL Q \equiv  \Map(\bR/\bZ, Q)  \times [0,+\infty).
\end{equation*}
In particular, projection to the second factor defines a continuous map
\begin{equation*}
  L \co \sL Q \to [0,+\infty) 
\end{equation*}
which we think of as recording the length of every loop.   When convenient, we shall parametrise a loop of length $L$ by the interval $[0,L]$ rather than $[0,1]$.

In this section, we define a chain map
\begin{equation}
  \label{eq:H(CL)}
H^{*}(\CL) \co  SH^{*}_{b}(\TQ) \to H_{n-*}(\sL Q)
\end{equation}
which counts half cylinders with boundary on $Q$.  The readers familiar with symplectic field theory should recognise that we are simply recasting the construction of Cieliebak and Latschev \cite{CL} in the language of Floer theory, which allows us to avoid the technical difficulties inherent to SFT.  One may also compare the construction we are about to give with that of the map $\CO$ in Section 5.2 of \cite{generation}.

We write $C^+$ for the positive half $ [0,+\infty) \times S^1$ of the cylinder with coordinates $(s,t)$, and pick maps  $I_{C^+} \co C^+ \to \sJ(\TQ)$ and $H_{C^+}  \co C^+ \to \sH(\TQ)$ which near infinity depend only on the $t$ variable, and agree respectively with $I_{t}$ and $H$.  Moreover, we require that, in a neighbourhood of the boundary of $C^+$, the restriction of $H_{C^+}$ to a neighbourhood of the zero section agree with $-F$.

Given a time-$1$ orbit $y_1$ of $X_{S^1}$, we define $\Disc^{1}(y_1)$ to be the space of finite energy maps 
\begin{equation*}
  u \co C^+ \to \TQ
\end{equation*}
with boundary and asymptotic conditions
\begin{equation}
  \label{eq:boundary_disc_1_interior_1_boundary_output}
  \begin{cases}  u(0,t)   \in Q &   \\
\lim_{s \to + \infty} u (s, \cdot) = y_1(\cdot) & 
 \end{cases}
\end{equation}
and solving the differential equation
\begin{equation}
  \label{eq:dbar_map_from_SH}
  \left( du -   (X_{H_{C^+}} + X_{F})   \otimes dt \right)^{0,1} = 0.
\end{equation}
The key point is that this equation agrees with Equation \eqref{eq:dbar_cylinder} at infinity, and with the usual $\dbar$ equation near the boundary since we have required the boundary to map to $Q$.  In particular, the codimension $1$ boundary strata of the Gromov compactification of  $\Disc^{1}(y_1)$ are  the images of the natural inclusions
\begin{equation}
  \label{eq:boundary_disc_from_SH}
 \Disc^{1}(y_0) \times \Cyl(y_0;y_1)   \to \partial  \Discbar^{1}(y_1).
\end{equation}

Choosing the data $H_{C^+}$ and  $I_{C^+}$ generically, we ensure that $ \Discbar^{1}(y_1)$ is a manifold with boundary of dimension of  $n -|y_1| $.  Applying the usual strategy for orienting moduli spaces of holomorphic curves (see Appendix \ref{sec:signs-orientations}), we find that a choice of relative $\Spin$ structure on $Q$ induces a canonical isomorphism
\begin{equation} \label{eq:iso_tangent_half_cylinder}
  \lambda(  \Discbar^{1}(y_1))  \cong \lambda(Q) \otimes \left( o_{y_1} \otimes \kappa_{y_1}^{b}\right)^{-1},
\end{equation}
i.e. determines an orientation of $  \Discbar^{1}(y_1) $ relative to the tangent space of $Q$ at the image of the basepoint $(0,0)$ and $o_{y_1}  \otimes \kappa_{y_1}^{b}$.  The boundary stratum \eqref{eq:boundary_disc_from_SH} inherits an orientation which we must compare with the product orientation coming from the isomorphisms
\begin{align*}
  \lambda(  \Discbar^{1}(y_0))  & \cong \lambda(Q) \otimes \left( o_{y_0} \otimes \kappa_{y_1}^{b} \right)^{-1} \\
 \langle \partial_{s} \rangle \otimes \lambda(  \Cyl(y_0;y_1) ) & \cong o_{y_0} \otimes \left( o_{y_1} \otimes  \kappa_{y_1}^{b}\right)^{-1} ,
\end{align*}
where $ \langle  \partial_{s}\rangle$ is the vector space of translations of the cylinder.  Taking the tensor product of these two isomorphisms, we have
\begin{align*}
  \lambda(  \Discbar^{1}(y_0))  \otimes \langle \partial_{s} \rangle \otimes \lambda(  \Cyl(y_0;y_1) ) \cong \lambda(Q)  \otimes  \left( o_{y_1} \otimes  \kappa_{y_1}^{b} \right)^{-1}.
\end{align*}
Since translating the cylinder in the direction of $ \partial_{s} $ moves every point away from $y_0$, it corresponds to an outward pointing normal vector.  Keeping track of the Koszul sign arising from permuting this line past $  \lambda(  \Discbar^{1}(y_0))  $, we find that there is a difference of 
\begin{equation*}
  n+|y_0|
\end{equation*}
between the two orientations.  In particular, we may choose fundamental chains for $ \Discbar^{1}(y_1)$ and $\Cylbar(y_0;y_1)     $ in cubical homology such that
\begin{equation}
  \label{eq:boundary_chains_discs_SH}
 \partial [\Discbar^{1}(y_1)]  =  \sum_{y_0} (-1)^{n+ |y_0|} [\Discbar^{1}(y_0)] \times [\Cylbar(y_0;y_1)].
\end{equation}

Restricting an element of $ \Discbar^{1}(y_1) $ to the boundary, we obtain a Moore loop on $Q$ whose base point is $1 \in S^1 = \partial D^2$.    Writing $\ev_{*}$ for the induced map on chains, we define 
\begin{align}
\label{eq:CL_chains}
\CL \co SC^{*}_{b}(\TQ) & \to \Lef_{n-*}(\sL Q) \\
\CL([y_1]) & = (-1)^{|y_1|}\ev_{*}( [\Discbar^{1}(y_1) ] ),
\end{align}
where the symbol $\Lef$  stands for the chain complex computing ordinary homology that we construct in Appendix \ref{sec:cuboidal-chains} as a quotient of the normalised cubical chain complex.

\begin{lem}
  $\CL$ is a degree $n$ chain map.
\end{lem}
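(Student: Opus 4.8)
The plan is to verify the two defining properties of a chain map of degree $n$: that $\CL$ shifts degree by $n$, and that it intertwines the Floer differential $\partial$ on $SC^{*}_{b}(\TQ)$ with the singular (cubical) boundary operator on $\Lef_{n-*}(\sL Q)$. The degree statement is immediate from the dimension formula in the previous paragraphs: $\Discbar^{1}(y_1)$ has dimension $n - |y_1|$, so $\ev_{*}([\Discbar^{1}(y_1)])$ is a chain of degree $n-|y_1|$, and $\CL$ sends a generator $[y_1]$ of degree $|y_1|$ to something of degree $n - |y_1|$, as required.

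The substantive point is compatibility with differentials. First I would compute $\partial \circ \CL([y_1])$ using Equation \eqref{eq:boundary_chains_discs_SH}: since $\ev_{*}$ commutes with the cubical boundary operator, $\partial \ev_{*}([\Discbar^{1}(y_1)]) = \ev_{*}(\partial [\Discbar^{1}(y_1)]) = \sum_{y_0} (-1)^{n+|y_0|} \ev_{*}([\Discbar^{1}(y_0)] \times [\Cylbar(y_0;y_1)])$. The evaluation map only sees the boundary of the half-cylinder, which for the product stratum $\Discbar^{1}(y_0) \times \Cylbar(y_0;y_1)$ factors through the evaluation of the $\Discbar^{1}(y_0)$ factor alone; the gluing construction that identifies this stratum with the boundary of $\Discbar^{1}(y_1)$ is such that the attached Floer cylinder $\Cylbar(y_0;y_1)$ only records the map $\partial_u \co |o_{y_1}| \otimes \kappa_{y_1} \to |o_{y_0}| \otimes \kappa_{y_0}$ on orientation and background lines. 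Hence $\ev_{*}([\Discbar^{1}(y_0)] \times [\Cylbar(y_0;y_1)])$ equals $\ev_{*}([\Discbar^{1}(y_0)])$, weighted by the coefficient of $[y_0]$ in $\partial_u([y_1])$, summed over rigid $u \in \Cyl(y_0;y_1)$. On the other side, $\CL(\partial [y_1]) = \CL\left( (-1)^{|y_1|} \sum_u \partial_u([y_1]) \right) = (-1)^{|y_1|} \sum_{y_0} (-1)^{|y_0|} \ev_{*}([\Discbar^{1}(y_0)])$ times the same structure coefficients. Comparing, the two expressions agree provided $(-1)^{n+|y_0|}$ matches $(-1)^{|y_1|+|y_0|}$ up to the sign convention, which holds since $|y_1|$ and $n$ have fixed parity relation modulo the degree shift — more precisely, one checks the signs $(-1)^{n+|y_0|}$ from the orientation comparison and $(-1)^{|y_1|}$, $(-1)^{|y_0|}$ from the definitions of $\partial$ and $\CL$ combine consistently.

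The key geometric input, which I would state explicitly rather than reprove, is Lemma on the boundary structure: the codimension-one strata of $\Discbar^{1}(y_1)$ are exactly the images of $\Discbar^{1}(y_0) \times \Cyl(y_0;y_1)$, together with the observation from the compactness argument (the analogue of the proof of Lemma \ref{lem:moduli_half_discs_manifold_corners}, invoking Lemma A.1 of \cite{generation}) that no disc bubbling or escape to infinity along the boundary occurs — the boundary maps to $Q$, which is disjoint from the region where the perturbation one-form fails to vanish, so any would-be bubble component is constant. This rules out the other potential boundary contributions and ensures $\partial \Discbar^{1}(y_1)$ is genuinely covered by the Floer-breaking strata only.

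The main obstacle I expect is the sign bookkeeping: one must reconcile the orientation sign $n + |y_0|$ appearing in \eqref{eq:boundary_chains_discs_SH} (derived from the Koszul sign of permuting the outward normal $\langle \partial_s \rangle$ past $\lambda(\Discbar^{1}(y_0))$) with the signs $(-1)^{|y_1|}$ in the definition of $\CL$ in \eqref{eq:CL_chains} and $(-1)^{|y_1|}$ in the definition of $\partial$ on $SC^{*}_b(\TQ)$, while also accounting for the fact that $\ev_{*}$ on the product stratum discards the cylinder factor, which itself carries the orientation line $o_{y_0}$ that must be absorbed into the identification \eqref{eq:iso_tangent_half_cylinder}. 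I would organize this by carrying the isomorphisms of orientation lines through the gluing explicitly, exactly as in Appendix \ref{sec:signs-orientations}, and check that the net discrepancy is zero; this is routine but error-prone, and it is the one place where the argument could hide a subtlety about whether $\CL$ is a chain map on the nose or only up to an overall sign (which, being a degree-$n$ map, is what the statement asserts).
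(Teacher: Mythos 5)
Your proposal follows the same route as the paper: apply $\ev_*$ to the boundary relation \eqref{eq:boundary_chains_discs_SH}, observe that only the rigid-cylinder strata survive, and compare signs. Two points deserve tightening. First, the reason the non-rigid strata disappear should be stated crisply: the evaluation map is constant in the $\Cylbar(y_0;y_1)$ directions, so whenever $|y_0| > |y_1|+1$ the resulting chain is degenerate and therefore vanishes in the quotient complex $\Lef_{n-*}(\sL Q)$; this is precisely why the sum collapses to rigid cylinders. Your phrase about the cylinder ``only recording the map $\partial_u$ on orientation and background lines'' is not quite the mechanism — the mechanism is that the $[0,1]$-factors coming from a positive-dimensional $\Cylbar(y_0;y_1)$ produce degenerate cubical chains.

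Second, and more importantly, the sign comparison in your middle paragraph is garbled. You compare $(-1)^{n+|y_0|}$ with $(-1)^{|y_1|+|y_0|}$, which mixes a sign that still carries the common prefactor $(-1)^{|y_1|}$ with one that does not, and then you invoke a ``fixed parity relation'' between $|y_1|$ and $n$ — there is no such relation. The correct bookkeeping is straightforward: the total sign in $\partial\CL([y_1])$ is $(-1)^{|y_1|}\cdot(-1)^{n+|y_0|}$ from \eqref{eq:CL_chains} and \eqref{eq:boundary_chains_discs_SH}, while the total sign in $\CL(\partial[y_1])$ is $(-1)^{|y_1|}\cdot(-1)^{|y_0|}$ from \eqref{eq:symplectic_complex} and \eqref{eq:CL_chains}; their ratio is exactly $(-1)^n$, giving $\partial\CL = (-1)^n \CL\,\partial$. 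Your closing paragraph correctly identifies that a degree-$n$ chain map should satisfy this relation, but you frame the verification as open when it in fact follows immediately from the three sign definitions already in hand; there is no residual subtlety to chase.
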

\begin{proof}
First note that, while the boundary of $ \Discbar^{1}(y_1) $ contains strata where the factors have arbitrary dimension, only those for which the cylinder is rigid survive after evaluation into $ \Lef_{n-*}(\sL Q) $; this is a consequence of taking the quotient by degenerate chains.  Applying the evaluation map to Equation  \eqref{eq:symplectic_complex} we find 
\begin{align*}
\partial \CL([y_1]) & = (-1)^{|y_1|} \sum_{|y_0| = |y_1| +1}  (-1)^{n+ |y_0|}  \ev_{*}( [\Discbar^{1}(y_0 )]  \times  [\Cylbar(y_0;y_1)]) \\
 & = (-1)^{n} \CL ( \partial [y_1]).
\end{align*}
In the last step, we have incorporated both the sign difference $(-1)^{|y_0|}$ between the evaluation map and $\CL$, but also $(-1)^{|y_1|}$ coming from Equation \eqref{eq:symplectic_complex}.
\end{proof}

\subsection{The PSS homomorphism and constant loops}
In this section, we prove the following result:
\begin{lem} \label{lem:constants_factor}
The map $H^{*}(\CL)$ fits into a commutative diagram
\begin{equation} \label{eq:diagram_PSS}
  \xymatrix{  H_{2n-*}(\TQ , \SQ) \ar[r]^{PSS} \ar[d]^{\cong} &  SH^{*}_{b}(\TQ) \ar[d]^{H^*(\CL)} \\
H_{n-*}(Q)  \ar[r] & H_{n-*}(\sL Q)   }
\end{equation}
\end{lem}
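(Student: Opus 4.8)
The plan is to realise the commutativity of \eqref{eq:diagram_PSS} by a direct geometric argument: glue the half-cylinders defining $\CL$ onto the PSS cylinders defining the map $H_{2n-*}(\TQ,\SQ)\to SH^*_b(\TQ)$, and identify the resulting one-parameter family of Riemann surfaces with a homotopy between the two composites. Concretely, I would introduce a moduli space of discs with one interior marked point, carrying a Floer datum that interpolates between the PSS equation \eqref{eq:PSS-equation} on the $+\infty$ part and the equation \eqref{eq:dbar_map_from_SH} on a positive half-cylinder end that has been capped off by the PSS surface. Equivalently, take elements $u \colon \bC \to \TQ$ whose restriction near the boundary circle (now thought of as $\partial D^2$) maps to $Q$ and which solve an equation whose $1$-form agrees with $\beta$ far from the boundary; as the gluing parameter varies, the domain degenerates at one end to a PSS cylinder broken off with a half-cylinder from $\Disc^1(y)$, while at the other end (gluing parameter $0$) the equation reduces to the ordinary $\dbar$-equation on the whole disc with boundary on $Q$, which is exactly a constant disc at a point of $Q$ together with the tautological inclusion $Q \hookrightarrow \sL Q$ as constant loops.

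The key steps, in order, are: (1) define the interpolating moduli space $\Disc^1(N)$ for a chain $N \to \TQ$ representing a relative cycle in $(\TQ,\SQ)$, using a $1$-form $\beta$ that near the boundary circle is chosen so that Equation \eqref{eq:dbar_map_from_SH}'s near-boundary normalisation holds and near $+\infty$ matches the PSS setup; (2) verify, using the same maximum-principle/compactness argument as in Lemma \ref{lem:moduli_half_discs_manifold_corners} (invoking Lemma A.1 of \cite{generation} for the vanishing of $\alpha$ on Lagrangian boundary), that the Gromov bordification is a compact manifold with corners of the expected dimension; (3) enumerate the codimension-$1$ boundary: strata where the interior puncture escapes to $\partial N$, strata where a Floer cylinder breaks at $-\infty$ (producing $\Disc^1(y)\times\Cyl(y,N)$, i.e. $H^*(\CL)\circ PSS$), and the single limiting stratum at gluing parameter $0$ giving the constant disc with boundary running over $N\cap Q$, i.e. the bottom composite $H_{2n-*}(\TQ,\SQ)\xrightarrow{\cong} H_{n-*}(Q)\to H_{n-*}(\sL Q)$; (4) orient everything coherently, matching the sign conventions of \eqref{eq:iso_tangent_half_cylinder}, \eqref{eq:boundary_chains_discs_SH}, and the PSS orientation, so that the boundary identity of fundamental chains becomes exactly the assertion that $H^*(\CL)\circ PSS$ equals the constant-loop map on homology.

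The main obstacle I expect is step (3)–(4): identifying the gluing-parameter-$0$ stratum correctly and checking it is genuinely a boundary stratum (not interior) of the compactified moduli space, and then matching signs. When the interpolation degenerates the PSS-part of the $1$-form $\beta$ to zero, one must argue via removal of singularities and energy bounds that the limit is a constant map on the disc component landing in $Q$ — so that the evaluation to $\sL Q$ lands precisely in the constant loops — rather than a nonconstant bubble; this requires a careful choice of the homotopy of $1$-forms and complex structures so that no energy is lost into sphere or disc bubbles away from $Q$. Once the geometry is pinned down, the sign bookkeeping is routine but delicate, following the template already established in the orientation computation preceding \eqref{eq:boundary_chains_discs_SH} and in Appendix \ref{sec:signs-orientations}.
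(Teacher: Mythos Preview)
Your proposal is correct and follows essentially the same approach as the paper: build a one-parameter family of Cauchy--Riemann problems on the half-cylinder $C^+$ interpolating between the $\CL$ equation and the (glued) PSS equation, and read off the homotopy from the boundary strata of the resulting parametrised moduli space. The paper implements this slightly more economically by keeping the domain fixed as $C^+$ and only deforming the $1$-form---setting $\beta^0\equiv 0$ at the $r=0$ end---so that the ``constant loop'' stratum is simply the set of genuinely constant maps (exactness of $Q$ makes your worry about nonconstant bubbles a non-issue), rather than arising from a domain degeneration.
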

In the above diagram, the bottom horizontal arrow is induced by the inclusion of constant loops, and the vertical arrow on the left may be expressed using Poincar\'e duality as a composition
\begin{equation*}
   H_{2n-*}(\TQ , \SQ) \cong H^{*}(\TQ) \cong H^{*}(Q)  \cong H_{n-*}(Q) .
\end{equation*}

 In order to prove this result, we consider a family $\beta^{r}$ of $1$-forms on $C^{+}$ parametrised by $r \in [0,\infty)$ each satisfying $d \beta^{r} \leq 0$,  and such that
\begin{equation}
 \parbox{36em}{$ \beta^{0} \equiv 0$ and whenever $r$ is sufficiently large, $\beta^{r}(s,t) = \beta(s-r,t)$.}
\end{equation}
Note that the second condition means that near $r=\infty$, $\beta^{r}$ is obtained by gluing the $1$-form $dt$ on the positive half cylinder to $\beta$, in particular, as $r$ grows, $\beta^{r}$ agrees with $dt$ in an expanding neighbourhood of the boundary, and the support of $d \beta$ is pushed to $s=+\infty$.  

Let us in addition choose a map $[0,\infty) \times C^+ \to \sJ(\TQ)$ which similarly agrees with $I_{C+}$ whenever $r=0$ and is obtained by gluing $I_{C^+}$ and $I_{PSS}$ whenever $r$ is sufficiently close to $+\infty$.  With this data, we consider the equation
\begin{equation} \label{eq:homotopy_PSS}
  (du -  (X_{H_{C^+}} + X_{F})  \otimes \beta^{r})^{0,1} = 0, 
\end{equation}
and  define, for each submanifold $N$ of $\TQ $ the moduli space  $\Disc^{1}_{[0,\infty)}(N)$ to be the union of the spaces of solutions $u \co C^+ \to \TQ$ to this Cauchy-Riemann equation for some $r \in [0,\infty)$ which map $\partial C^+$ to $Q$ and converge to a point in $N$ at $+\infty$.

\begin{proof}[Sketch of the proof of Lemma \ref{lem:constants_factor}]
The Gromov bordification of  $\Disc^{1}_{[0,\infty)}(N)  $ is compact because all almost complex structures are of contact type near the boundary.  Note that whenever $r=0$, all solutions to Equation \eqref{eq:homotopy_PSS} are constant.  The standard transversality package therefore implies that, as long as the almost complex structure is chosen generically and $N$ meets $Q$ transversely, $ \Disc^{1}_{[0,\infty)}(N) $ is a smooth manifold  of dimension $\codim(N) +1$ with boundary $Q \cap N$.  

Using the length parametrisation and restricting every map to the boundary of $C^+$, we obtain an evaluation map 
\begin{equation*}
  \Discbar^{1}_{[0,\infty)}(N) \to \sL Q
\end{equation*}
from the Gromov compactification.  We claim that the image of a fundamental chain on this moduli space defines the chain-level homotopy which establishes the commutativity of Diagram   \eqref{eq:diagram_PSS}.  Note that image, under the evaluation map, of the total boundary of  this moduli space's fundamental chain corresponds to composing the homotopy with the differential in $C_{n-*}(\sL Q)$; the desired result shall follow by interpreting different boundary strata to account for the remaining terms in the equation for a homotopy.

Since taking the intersection of $N$ with $Q$ represents on homology the result of applying the homomorphism  $H_{2n-*}(\TQ , \SQ)  \to H_{n-*}(Q) $ to the fundamental cycle of $N$,  the stratum of $\partial \Discbar^{1}_{[0,\infty)}(N)  $ corresponding to $r=0$ represents the inclusion of constant loops. 

 By  letting the parameter $r$ go to $\infty$, we obtain the stratum
\begin{equation*}
 \Discbar^{1}(y) \times \Cylbar(y,N) 
\end{equation*}
which may be interpreted algebraically as the composition of $PSS$ with $\CL$.    The remaining part of the boundary is covered by the image of $  \Disc^{1}_{[0,\infty)}(\partial N) $ which corresponds to applying the differential and then the homotopy.
\end{proof}
\begin{rem}
One has several options in order to realise the map $N \to Q \cap N$ as a chain map inducing the homomorphism  $H_{2n-*}(\TQ , \SQ)  \to H_{n-*}(Q) $.  If one works with cubical chains, one may consider the subcomplex of \emph{locally finite} cubical chains generated by maps whose restriction to every stratum is transverse to $Q$.  For each such chain, the intersection with $Q$ is a manifold with corners for which we may choose fundamental chains in cubical homology by induction.  There are alternative models using Morse chains.
\end{rem}

\section{From the open to the closed sector}

\subsection{The bar model for Hochschild homology} \label{sec:bar-model-hochschild}
Given an $A_{\infty}$ algebra $\cA$, consider the graded vector space
\begin{equation*}
   CC_{*}^{(d)}(\cA) = \cA \otimes \left(  \cA[1] \right)^{\otimes d-1}.
\end{equation*}
 The cyclic bar complex of $\cA$ is the direct sum
 \begin{equation}
   \label{eq:cyclic_bar_complex}
    CC_{*}(\cA) = \bigoplus_{d}  CC_{*}^{(d)}(\cA)
 \end{equation}
equipped with the Hochschild differential
\begin{multline}
  b (a_{d} \otimes \cdots \otimes a_{1}) =  \sum_{1 \leq  i+j < d} (-1)^{\maltese_{1}^{i}} a_{d} \otimes \cdots \otimes a_{i+j+1} \otimes \mu_{j}(a_{i+j} , \ldots, a_{i+1}) \otimes a_{i} \otimes \ldots \otimes a_{1}  \\
+  \sum_{ 0 \leq i+j < d} (-1)^{  \shuffle_{i}^{i+j} + \maltese_{i+1}^{i+j} +1} \mu_{d-j-1}( a_{i}, \ldots, a_1, a_d, \ldots, a_{i+j+1} ) \otimes a_{i+j} \otimes \ldots \otimes a_{i+1}
  \label{eq:hochschild_differential}
\end{multline}
where the second sign, using the convention that $\maltese_{*}^{**}$ stands for the sum of the reduced degrees ($| \cdot | +1$) of elements between $*$ and $**$, may be expressed as
\begin{equation} \label{eq:sign_hochschild}
 \shuffle_{i}^{i+j} = \maltese_{1}^{i} \cdot (1+ \maltese_{i+1}^{d}) + \maltese_{i+j+1}^{d-1}.
\end{equation}
\begin{rem}
This is  a good opportunity to give some heuristics about signs: Our convention is that $ CC_{*}(\cA) $  is cohomologically graded (despite its name), with the degree of $  a_{d} \otimes \cdots \otimes a_{1}$ given by the sum of the degrees of each factor with the understanding that $a_{d}$ is given its usual degree, with every other letter  assigned its reduced degree.  Since all operations $\mu_{d}$ are of degree one with respect to the reduced degree, we introduce a sign $|a|+1$ whenever we permute an operation ``past'' an element $a$ of the algebra (except when it is in last position).

To obtain the sign in Equation \eqref{eq:hochschild_differential} from these considerations, we must first agree that the operations $\mu_{d}$ are applied from the right.  With this in mind, we obtain a sum
\begin{equation*}
  \maltese_{1}^{i} \cdot (1 + \maltese_{i+1}^{d}) + \maltese_{i+1}^{i+j} + \maltese_{i+j+1}^{d-1}+1
\end{equation*}
where the first expression comes from permuting $ a_{i}\otimes \cdots \otimes a_1 $ past the other terms, the second from permuting $\mu_{d-j-1}$ past $ a_{i+j} \otimes \ldots \otimes a_{i+1} $, and the third from permuting an invisible symbol of degree $1$ from its position to the right of $a_d$ to a position just before $a_{i+j}$.  This invisible symbol records the fact that the first term in the bar complex, unlike all the others, is assigned its ordinary degree.
\end{rem}

Given an $A_{\infty}$ homomorphism $\cF \co \cA \to \cB$ with polynomial terms $\cF^{d}$, we have an induced map $CC_{*}(\cF)$ on Hochschild chains given by
\begin{multline}
 \label{eq:CC_map}
 a_{d} \otimes \cdots \otimes a_1  \mapsto \sum (-1)^{ \shuffle_{s_1}^{s_k} } \cF^{ d - s_1 + s_{k}}(a_{s_1},  \ldots, a_{1} , a_{d}, \ldots, a_{s_k+1})  \\ \otimes \cF^{s_k - s_{k-1}}(a_{s_k }, \ldots, a_{s_{k-1}+1} ) \otimes \ldots \otimes   \cF^{s_{2} - s_{1}}(a_{s_{2}}, \ldots, a_{s_1+1} ) .
\end{multline}
\begin{lem}
If $\cF$ is a quasi-isomorphism, then $CC_{*}(\cF)  $  induces an isomorphism on Hochschild homology. \noproof
\end{lem}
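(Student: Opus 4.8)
The plan is to reduce the statement to the K\"unneth theorem via the word-length filtration on the cyclic bar complex. For $p \ge 1$, let $F_p CC_*(\cA) = \bigoplus_{1 \le d \le p} CC_*^{(d)}(\cA)$ be the subspace spanned by tensors of length at most $p$. Reading off \eqref{eq:hochschild_differential}, every term of the Hochschild differential $b$ either preserves the length --- precisely the term obtained by inserting $\mu_1$ in a non-cyclic slot --- or strictly decreases it (any term using some $\mu_j$ with $j \ge 2$, and all of the cyclic terms, since there is no $\mu_0$). Hence each $F_p$ is a subcomplex, the filtration is increasing and exhaustive with $F_1 = \cA$, and the associated graded $F_p / F_{p-1} \cong CC_*^{(p)}(\cA) = \cA \otimes (\cA[1])^{\otimes p-1}$ carries the tensor-product differential assembled from the internal differentials $\mu_1$.

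Next I would observe that $CC_*(\cF)$ respects this filtration. In \eqref{eq:CC_map}, a summand whose output has $k$ tensor factors consumes all $d$ inputs through one cyclic $\cF$-term and $k-1$ ordinary $\cF$-terms, so $k \le d$, with equality exactly when every $\cF^j$ that appears is $\cF^1$. Thus $CC_*(\cF)(F_p) \subseteq F_p$; on $F_1 = \cA$ the induced map is simply $\cF^1$, and on $F_p/F_{p-1}$ it is (a sign times) $\cF^1 \otimes (\cF^1)^{\otimes p-1}$ acting on $\cA \otimes (\cA[1])^{\otimes p-1}$.

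The proof would then proceed by induction on $p$. The base case $p = 1$ is exactly the hypothesis that $\cF^1$ is a quasi-isomorphism. For the inductive step, since $\cF^1 \colon \cA \to \cB$ is a quasi-isomorphism and the complexes in play are termwise flat over the ground ring (in the applications of interest they are termwise free over $\bZ$; over a field this is automatic), the K\"unneth theorem shows $\cF^1 \otimes (\cF^1)^{\otimes p-1}$ is a quasi-isomorphism on $F_p/F_{p-1}$. Comparing the long exact homology sequences of $0 \to F_{p-1} \to F_p \to F_p/F_{p-1} \to 0$ for $\cA$ and for $\cB$ and applying the five lemma, $CC_*(\cF)$ is a quasi-isomorphism on every $F_p$. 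Since $CC_*(\cA)$ is the (increasing, exhaustive) union of the $F_p$ and homology commutes with filtered colimits, $CC_*(\cF)$ is a quasi-isomorphism on all of $CC_*(\cA)$. Equivalently, one may phrase this as the comparison theorem for the word-length spectral sequence.

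The main obstacle, such as it is, lies in the bookkeeping of the first two steps: verifying from \eqref{eq:hochschild_differential} and \eqref{eq:CC_map} --- signs and index ranges included --- that both $b$ and $CC_*(\cF)$ are genuinely filtration-preserving and that the associated-graded map is the asserted tensor power of $\cF^1$. Once that is in place, the K\"unneth identification of the graded pieces and the five-lemma induction are entirely routine, and no convergence subtleties arise because the filtration is exhaustive and bounded below.
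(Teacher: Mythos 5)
The paper gives no proof of this lemma (it is marked \verb|\noproof| and used as a standard fact), so there is nothing in the source to compare against; your argument is the standard one and is correct. The word-length filtration $F_p = \bigoplus_{d\le p} CC_*^{(d)}(\cA)$ is preserved by $b$ (since every $\mu_j$, whether interior or cyclic, replaces $j$ tensor factors by one, hence is length-nonincreasing with equality exactly for $j=1$) and by $CC_*(\cF)$ (each $\cF^j$ likewise collapses $j$ inputs to one output), and the induced map on $F_p/F_{p-1}\cong \cA\otimes(\cA[1])^{\otimes p-1}$ is $\pm(\cF^1)^{\otimes p}$, as one checks from \eqref{eq:CC_map} by noting that the only length-preserving summand is the one in which the cyclic factor $\cF^1$ absorbs exactly the distinguished slot. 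From there, K\"unneth plus the five lemma on $0\to F_{p-1}\to F_p\to F_p/F_{p-1}\to 0$ and the fact that homology commutes with the filtered colimit $\bigcup_p F_p = CC_*$ finish the proof, exactly as you say.

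One point worth making explicit: the K\"unneth step needs a hypothesis ensuring that tensoring with a quasi-isomorphism between the complexes in question is again a quasi-isomorphism. Termwise flatness alone does \emph{not} suffice for complexes unbounded in both directions (K-flatness is the right condition in general). In the paper's applications this causes no trouble: $C_{-*}(\Omega_q Q)$ is a non-positively graded complex of free $\bZ$-modules, hence K-flat, and $CW^*_b(\Tq)$ is termwise free, so your argument applies; but the hypothesis should be noted if the lemma is to be read as a purely formal statement about $A_\infty$-algebras over an arbitrary base ring.
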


\subsection{An ad-hoc model for Goodwillie's map} \label{sec:an-ad-hoc}
 In this section, we construct a chain map
\begin{equation}
  \label{eq:goodwillie_map}
  \G \co CC_{*}(C_{-*}(\Omega_q Q)) \to \Lef_{-*}(\sL Q)
\end{equation}
where the left hand side is the cyclic bar complex of $ C_{-*}(\Omega_q Q) $, and the right hand side is a chain model for the homology of the free loop space using a quotient of cubical chains described in Appendix  \ref{sec:cuboidal-chains}. 

Let us write $\iota$ for the inclusion of $ \Omega_q Q $ in $\sL Q$.  We define
\begin{equation}
  \label{eq:G_map_words_1}
 \G^{(1)} \co CC^{(1)}_{*}( C_{-*} \Omega_{q} Q) \to \Lef_{-*}(\sL Q)
\end{equation}
on elements of degree $i$ to be the composition of $(-1)^{i}\iota_{*}$ with the projection map from $ C_{-*}(\sL Q) $ to $  \Lef_{-*}(\sL Q) $  which is a quasi-isomorphism by Corollary \ref{cor:standard_model_chains}.   We have to introduce the sign $(-1)^{i}$ because the differential on $  CC^{(1)}_{*}( C_{-*} \Omega_{q} Q)$ is the negative of the one on $ C_{-*} \Omega_{q} Q $.

For each  number $t \in [0,1]$, we now define a map
\begin{equation}
   \#_{t} \co  \Omega_{q} Q \times  \Omega_{q} Q \to \sL_{Q}.
\end{equation}
 The length of $\gamma_1  \#_{t} \gamma_{2} $ is the sum $\ell_1+\ell_2$ of the lengths of $\gamma_1$ and $\gamma_2$, and the parametrisation by the interval $[0,\ell_1 + \ell_2]$ is given by 
\begin{equation}
\gamma_{1}   \#_{t} \gamma_{2} \left(s  \right) = 
\begin{cases} 
\gamma_{1} \left( s+  t \ell_{1} \right) & \textrm{if } s \in  [0, (1-t) \ell_1]  \\
\gamma_{2}\left( s -  (1-t) \ell_1 \right)  &\textrm{if } s \in  [(1-t) \ell_1, (1-t) \ell_1 + \ell_2]   \\
\gamma_{1} \left( s -  \ell_2 -  (1-t) \ell_1  \right) & \textrm{otherwise} 
 \end{cases} 
\end{equation}
The idea is simply to concatenate $ \gamma_{1} $ and $ \gamma_{2} $ then use the parameter $t$ to move the base point of the loop ``around'' $\gamma_1$ so that  $ \gamma_1  \#_{t} \gamma_{2} $  agrees with  the usual concatenation of the two loops in the two different orders whenever  $t = 0,1$ (see Figure \ref{fig:loops_compose}).
\begin{figure}
  \centering
  \includegraphics{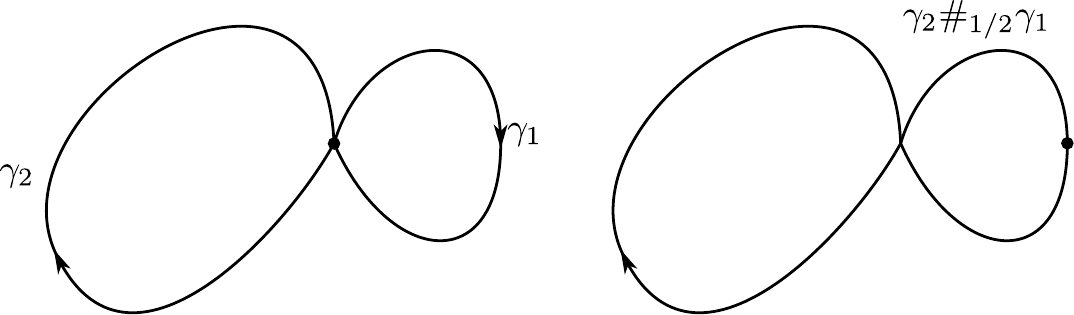}
  \caption{ }
  \label{fig:loops_compose}
\end{figure}

Given a pair of cubical chains $\tau$ and $\sigma$ of dimensions $i$ and $j$, with values in $ \Omega_{q} Q $, we define a cubical chain of dimension $i+j+1$ by taking the product of $\tau$ and $\sigma$, concatenating the corresponding loops, then using the last variable to ``move the basepoint'' around the loop coming from $\sigma$:
\begin{align} \label{eq:moving_basepoint}
\tau \btimes \sigma \co I^{i+j+1} & \to \sL Q  \\
(t_1, \ldots, t_{i+j+1})  & \mapsto  \tau(t_1, \ldots, t_i)  \#_{t_{i+j+1}} \sigma(t_{i+1}, \ldots, t_{i+j}) .
\end{align}

We now define the value of $\G$ on words of length $2$ as
\begin{align}
  \label{eq:G_map_words_2}
 \G^{(2)} \co CC^{(2)}_{*}( C_{-*} \Omega_{q} Q) & \to \Lef_{-*}(\sL Q) \\
\sigma_{2} \otimes \sigma_{1} & \mapsto - \sigma_{1}  \btimes \sigma_2,
\end{align}
and prescribe that it vanish on longer words.
\begin{lem}
$\G$  is a chain map.
\end{lem}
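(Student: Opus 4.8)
The plan is to verify directly that $\G = \G^{(1)} + \G^{(2)}$ intertwines the Hochschild differential $b$ on $CC_*(C_{-*}\Omega_q Q)$ with the boundary operator on $\Lef_{-*}(\sL Q)$. Because $\G$ is supported on words of length $1$ and $2$, the identity $\partial \G = \G b$ decomposes into finitely many pieces indexed by the length of the input word, and I would check each separately.

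First, on words of length $1$, $b$ is (up to sign) just $\mu_1^{\P} = \partial$, so $\partial \G^{(1)}(\sigma) = \G^{(1)}(b\sigma)$ follows from the fact that $\iota_*$ is a chain map together with the sign twist $(-1)^i$ introduced precisely to absorb the $(-1)$ relating the differential on $CC^{(1)}_*$ to that on $C_{-*}\Omega_q Q$; the projection to $\Lef$ is a chain map by Corollary~\ref{cor:standard_model_chains}. There is one subtlety: the component of $b$ that closes a length-$1$ word back into a length-$1$ word via $\mu_2^{\P}$ (the ``wrap-around'' term $\mu_2^{\P}(\sigma,\cdot)$-type contribution) produces, on the free loop space side, the difference between the two cyclic orderings of a concatenation, which must be matched against $\partial(\tau\btimes\sigma)$ restricted to the $t_{i+j+1}=0$ and $t_{i+j+1}=1$ faces --- this is where the definition of $\#_t$ does its work, since at $t=0,1$ the operation $\btimes$ degenerates to honest concatenation in the two orders, and in $\Lef$ the two resulting chains become homologous via $\sigma_1\btimes\sigma_2$.

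The main computation is on words of length $2$. Here $\partial(\sigma_1\btimes\sigma_2)$ has several groups of faces: the faces coming from $\partial\sigma_1$ and $\partial\sigma_2$ (matching the $\mu_1^{\P}$-terms of $b$ on a length-$2$ word), the two faces $t_{i+j+1}=0,1$ (matching the $\mu_2^{\P}$-concatenation terms that turn a length-$2$ word into a length-$1$ word, in both cyclic orders, hence landing in $\G^{(1)}$ of a product), and --- crucially --- we must see that no face of $\sigma_1\btimes\sigma_2$ is left over to require a nonzero $\G^{(3)}$. The reason $\G$ can vanish on words of length $\geq 3$ is that the Hochschild differential applied to a length-$\geq 3$ word, after composing with $\G$, only ever produces terms already accounted for: $b$ can shorten a length-$3$ word to length $2$ (via $\mu_2^{\P}$ or $\mu_1^{\P}$ does not change length) or to length $1$, but composing the $\mu_2^{\P}$-multiplication with $\btimes$ is compatible with the associativity/homotopy-associativity of loop concatenation up to the moving-basepoint correction, and I expect the relevant cancellation to be exactly the statement that $(\tau\cdot\sigma)\btimes\rho$ and $\tau\btimes(\sigma\cdot\rho)$-type expressions agree after projection to $\Lef$, forcing the $\G^{(3)}$-terms to cancel in pairs. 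The whole verification is thus a bookkeeping exercise in the cubical chain complex.

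The hard part will be the signs: matching the Koszul/reduced-degree signs $\maltese$, $\shuffle$ appearing in \eqref{eq:hochschild_differential} and \eqref{eq:sign_hochschild} against the signs produced by (i) the $(-1)^i$ twist in $\G^{(1)}$, (ii) the explicit $-$ sign in the definition of $\G^{(2)}$, (iii) the Koszul signs generated when the last cubical coordinate $t_{i+j+1}$ is commuted into position in $\tau\btimes\sigma$, and (iv) the orientation conventions for products of cubical chains and for the faces $t_k=0$ versus $t_k=1$. My approach would be to first establish the sign-free identity at the level of underlying spaces/chains (which faces of $\Pilbar$-style products match which terms of $b$), and only then fix all conventions and propagate signs through, exactly as is done for the analogous statement in Lemma~\ref{lem:good_choice_fundamental} and its appendix verification in \cite{string-top}. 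I would organize the computation so that the length-$2$ case is done in full and the vanishing on longer words is reduced to the observation that $b$ composed with $\G$ never produces an unmatched term of the form $\G^{(\geq 3)}(\cdots)$ nor an unmatched face of a $\btimes$-chain.
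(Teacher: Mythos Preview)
Your overall strategy---case analysis by word length, matching faces of $\sigma_1\btimes\sigma_2$ against terms of $b$---is exactly the paper's approach. However, there are two concrete gaps.

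First, a small confusion: on words of length $1$ there is no ``wrap-around'' $\mu_2^{\P}$ term in $b$; the Hochschild differential on $CC^{(1)}$ is just $\mu_1^{\P}$ (with its sign). The $\mu_2^{\P}$ terms you describe, and the $t=0,1$ faces of $\tau\btimes\sigma$, belong entirely to the length-$2$ analysis. What you \emph{do} need in length $2$ and did not mention is that the $t_{i+j+1}=1$ face of $\sigma_1\btimes\sigma_2$ is $\sigma_2\cdot\sigma_1$ only after a permutation of the cubical coordinates, so matching it with $\G^{(1)}(\mu_2^{\P}(\sigma_1,\sigma_2))$ requires the $\permute$ relation in $\Lef$, not merely the quasi-isomorphism of Corollary~\ref{cor:standard_model_chains}.

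Second, and more seriously, the length-$3$ cancellation is not a pairwise identity of the form ``$(\tau\cdot\sigma)\btimes\rho = \tau\btimes(\sigma\cdot\rho)$ in $\Lef$''. The Hochschild differential on $\sigma_3\otimes\sigma_2\otimes\sigma_1$ produces \emph{three} length-$2$ terms (two adjacent $\mu_2^{\P}$'s and one cyclic wrap-around), and $\G^{(2)}$ of their sum must vanish. The mechanism is that one of the three resulting $\btimes$-chains---the one in which the basepoint moves around a concatenation of two loops---can be \emph{split}, using the $\fit$ relation of Equation~\eqref{eq:chains_that_fit}, into two cubes according to which of the two constituent loops the basepoint is currently traversing; after coordinate permutations (the $\permute$ relation again) these two pieces match the other two terms. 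Without naming and using the $\fit$ relation you cannot close this step: ordinary cubical chains do not satisfy the required three-term identity, and it is precisely for this reason that the target of $\G$ is the quotient $\Lef$ rather than $C_*$. Once you make this explicit, the sign bookkeeping you outline is routine.
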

\begin{proof}
It is clear that the restriction of $\G$ to words  of length greater than $3$ commutes with the differential, while the case of length $1$ was discussed just after Equation \eqref{eq:G_map_words_1}.  For words of length $2$,  we compute that
\begin{align*}
  \partial  \G(\sigma_2 \otimes \sigma_1)  & =  -  \partial \left( \sigma_{1}  \btimes \sigma_2  \right) \\
& =  - \left( \partial \sigma_1\right)  \btimes \sigma_2  - (-1)^{|\sigma_1|}  \sigma_{1}  \btimes\left( \partial \sigma_2\right) + (-1)^{|\sigma_1| +|\sigma_2|}  \iota_{*}( \sigma_{1}   \#_{0} \sigma_{2} ) \\
& \qquad + (-1)^{|\sigma_1| +|\sigma_2|+1}  \iota_{*}( \sigma_{1}   \#_{1} \sigma_{2} ) \\ 
& =  \G( \sigma_{2} \otimes \partial \sigma_1 )  +  (-1)^{|\sigma_1|}   \G( \partial \sigma_{2} \otimes  \sigma_1 ) + \G(\sigma_{1} \cdot  \sigma_2)   + (-1)^{ |\sigma_2| \cdot |\sigma_1|+1} \G(\sigma_2 \cdot \sigma_1 ) .
\end{align*}
For the last term, observe  that $\sigma_{1}   \#_{1} \sigma_{2}  $ agrees with $ \sigma_2 \cdot \sigma_1  $ only after applying a permutation which identifies the  products  $[0,1]^{|\sigma_2| } \times [0,1]^{|\sigma_1| } $ and $[0,1]^{|\sigma_1| } \times [0,1]^{|\sigma_2| }$;  these cubical chains are identified in the chain complex  $\Lef_*$ up to the appropriate sign because we have taken the quotient by the subcomplex $\permute $ given in Equation \eqref{eq:subcomplex_permute}.

Writing out the last line in terms of the $A_{\infty}$ structure introduced in the beginning of Section \ref{sec:open-sector}, we find that
\begin{align*}
   \partial  \G(\sigma_2 \otimes \sigma_1)  & =  \G( \sigma_{2} \otimes \mu_{1}^{\P} \sigma_1 )  + (-1)^{|\sigma_1|}  \G( \mu_{1}^{\P}\sigma_{2} \otimes  \sigma_1 ) + (-1)^{|\sigma_1|} \G( \mu_{2}^{\P}(\sigma_{2},  \sigma_1) )  \\
& \qquad + (-1)^{|\sigma_2| + |\sigma_2| \cdot |\sigma_1|+1} \G(\mu_{2}^{\P}(\sigma_1, \sigma_2 )) \\
& = \G(b( \sigma_2 \otimes \sigma_1)).
\end{align*}

It remains therefore to show that given a word $ \sigma_{3} \otimes \sigma_2 \otimes \sigma_1 $, we have
\begin{multline*}
 \G( \sigma_{3}, \mu^{\P}_{2}(\sigma_2 \otimes \sigma_1))  + (-1)^{|\sigma_1|} \G( \mu^{\P}_{2}(\sigma_{3}, \sigma_2) \otimes \sigma_1)  \\  +  (-1)^{|\sigma_2| + (|\sigma_1| +1)(|\sigma_3| + |\sigma_2| +1)}  \G( \mu^{\P}_{2}(\sigma_{1}, \sigma_3) \otimes \sigma_2)  = 0 
\end{multline*}
This cancellation comes from taking the quotient of the usual cubical chains by the subcomplex $\fit$ defined in Equation \eqref{eq:chains_that_fit}.  Indeed, the cell $\sigma_{3} \btimes \mu_{2}^{\P}(\sigma_2 , \sigma_1) $ can be split into two cells which, up to permuting the coordinates, can be identified respectively with $ \mu_2^{\P}(\sigma_{3} , \sigma_2) \btimes \sigma_1 $  and  $ \mu_2^{\P}(\sigma_{1} , \sigma_3) \btimes \sigma_2$. 
\end{proof}

\begin{example}
Let $\gamma \co S^1 \to S^1$ denote the identity map, and $\gamma^{-1}$ the inverse loop.  Since concatenation of loops does not define a strictly commutative product on $  C_{-*} \Omega_{q} S^1 $, the Hochschild chain
\begin{equation*}
  \gamma^{-1} \otimes \gamma \in  CC^{(2)}_{-1}( C_{-*} \Omega_{q} S^1) 
\end{equation*}
is not closed, but the fact that it is homotopy commutative implies that there is a chain  $\sigma$ in $C_{1} \Omega_{q} S^1  $  which may be chosen among contractible loops such that $b( \gamma^{-1} \otimes \gamma ) = \partial \sigma $.  In particular,
\begin{equation*}
  \gamma^{-1} \otimes \gamma +  \sigma
\end{equation*}
represents a class in the Hochschild homology of $C_{-*} \Omega_{q} S^1$.

The image of this class under $H_{*}(\G)$ is the fundamental class of the circle, included in its free loop space as the space of constant loops.  The easiest way to see this is to recall that the base point projection map
\begin{equation*}
\pi \co \sL S^1 \to S^1
\end{equation*}
induces an isomorphism on homology when restricted to contractible loops.  Since $ \pi_{*} \circ \G (\sigma) $ is a degenerate chain, we simply observe that the base points of the family of loops $ \gamma^{-1} \#_{t} \gamma $ cover $S^1$ with multiplicity one as $t$ ranges between $0$ and $1$. 
\end{example}

\subsection{Review of the map $\OC$}
We shall now recall the definition of the map $\OC$ constructed in Section 5.3 of \cite{generation}.  First, we define $\Discbar_{d}^1$  to be the Deligne-Mumford compactification of the moduli space of holomorphic discs with $1$ interior puncture  and $d$ boundary punctures ordered counterclockwise (see Figure \ref{fig:disc^1_3}): we choose a cylindrical negative end at the interior puncture and positive strip-like ends at the boundary punctures which depend smoothly on the modulus and which near each stratum agree with the ends obtained by gluing.

\begin{figure}
  \centering
  \includegraphics{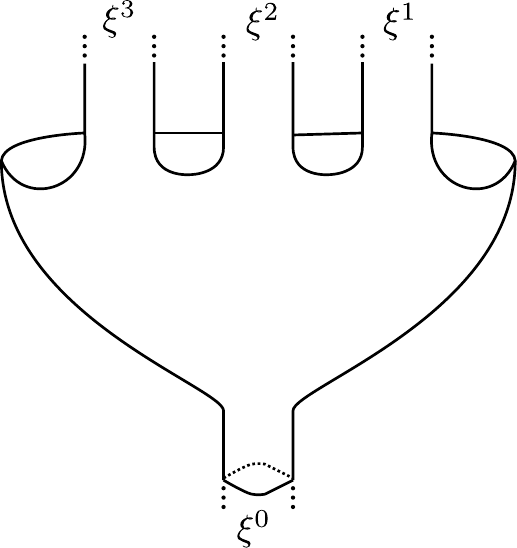}
  \caption{ }
  \label{fig:disc^1_3}
\end{figure}

\begin{defin} \label{def:floer_datum_discs_interior_puncture}
A \emph{Floer datum} $D_{S}$ on a stable disc $S \in \Discbar_{d}^{1}$ with $d$ positive boundary punctures $(\xi^1, \ldots, \xi^d) $ and one negative interior puncture $\sigma$ consists of the following choices on each component:
\begin{enumerate}
\item Time shifting map:  A map $ \rho_{S} \co \partial S \to [1,d]$ which is constant near each marked point.  We write $w_{k,S}$ for the value on the $k$\th end and set  
  \begin{equation}
    w_{0,S} = \sum_{k=1}^{d} w_{k,S}.
  \end{equation}
\item Basic $1$-form and Hamiltonian perturbations:  A closed $1$-form $\alpha_{S}$ whose restriction to the boundary vanishes and a map $H_{S} \co S \to \sH(\TQ)$  on each surface such that the pullback of $ X_{H_S} \otimes \alpha_{S} $ under the $k$\th end agrees with  $  X_{\frac{H}{w_{k,S}} \circ \psi^{w_{k,S}}} \otimes dt$.
\item Subclosed $1$-form: A $1$-form $\beta_{S}$ which may be written as the product of a smooth function with $\alpha_{S}$,  satisfying $d \beta_{S} \leq 0$,  and whose pullback under the $k$\th end vanishes unless $k=0$, in which case it agrees with $dt$.
\item Almost complex structures:  A map $I_{S} \co S \to \sJ(\TQ)$ whose pullback under the $k$\th end agrees with  $ (\psi^{w_{k,S}})^{*} I_{t} $ unless $k=0$ in which case it agrees with  $ (\psi^{w_{0,S}})^{*} I_{S^1}$. 
\end{enumerate}

A  \emph{universal and conformally consistent}  choice of Floer data for the map $\OC$ is a choice $\bfD_{\OC}$ of Floer data  for every integer $d \geq 1$, and every (representative) of an element of  $\Discbar_{d}^1$ which vary smoothly over the compactified moduli space, such that the two natural Floer data (coming from $\bfD_{\OC}  $ or $ \bfD_{\mu}$)  on any irreducible component of a singular disc are conformally equivalent, and which agree, to infinite order near each stratum,  with the Floer data obtained by gluing.
\end{defin} 

An inductive construction implies the existence of such universal Floer data in sufficient abundance to guarantee transversality.  In particular, given a sequence of chords  $\vx = \{ x_1, \ldots, x_d \}$ and an orbit $y_0 \in \Orbit$, we define $ \Disc_{d}^1(y_0; \vx) $ to be the moduli space of maps $u \co S \to \TQ$, with $S$ an arbitrary element of $ \Disc_{d}^1 $, such that $u(\partial S)$  lies in $\Tq$, which satisfies the appropriate  asymptotic conditions along the ends and solves the differential equation
\begin{equation}
  \label{eq:dbar_disc_1_interior_output}
\left(du - X_{H_S} \otimes \alpha_{S} - X_{\frac{F}{w_{0,S}} \circ \psi^{w_{0,S}}} \otimes \beta_{S} \right)^{0,1} = 0
\end{equation}
where the $(0,1)$ part is taken with respect to the $S$-dependent almost complex structure, and the function $F$ is the one appearing in the definition of symplectic cohomology.

Assuming that transversality is satisfied and that $|y_0| =  n -d + 1 + \sum_{1 \leq k \leq d} |x_k|  $, we conclude that the elements of  $\Disc_{d}^1(y_0; \vx) $ are rigid, and that we may canonically associate to each disc $u$ an isomorphism
\begin{equation}
  o_{x_d} \otimes \cdots \otimes o_{x_1} \to o_{y_0}
\end{equation}
using the conventions explained in Appendix \ref{sec:signs-orientations}.

Writing $\OC_{u}$ for the induced map on orientation lines, we define a map $\OC_d$
\begin{equation}
  \OC_d([x_d], \ldots, [x_1])  = \sum_{\stackrel{|y_0| =n -d + 1 + \sum_{1 \leq k \leq d} |x^k| }{u \in \Disc_{d}^{1}(y_0, \vx) }} (-1)^{|x_d| + \dagger} \OC_{u}( [x_d], \ldots, [x_1])
\end{equation}
where $\dagger$ is given in Equation \eqref{eq:dagger_sign}. 

As proved in Lemma 5.4 of \cite{generation}, these maps are the components of a degree $n$ chain map
\begin{equation}
\OC \co CC_{*}(CW^{*}_{b}(\Tq)) \to SC^{*}_{b}(\TQ)
\end{equation}
where the left hand-side is the cyclic bar complex of $CW^{*}_{b} (\Tq)$.

\section{Construction of the Homotopy}

In this section, we prove Proposition \ref{prop:commutative_diagram} by constructing a homotopy between the two possible chain level compositions.  We begin by introducing some abstract moduli spaces of holomorphic curves that will appear in the construction.

\subsection{Adding a marked point on the outgoing segment} \label{sec:adding-marked-point}
We shall consider the moduli space $\Pil_{d,1} $ of half-discs with $d$ incoming ends and one marked point on the outgoing segment.  By forgetting this marked point, we obtain a submersion to $\Pil_{d}$, with fibre an interval, which extends to the Gromov compactification
\begin{equation}
  \Pilbar_{d,1} \to \Pilbar_{d}.
\end{equation}
The fibre of this map over a point in the boundary is still topologically an interval, although over a stratum of codimension greater than $1$, such an interval will intersect several strata.  Indeed, when a half disc breaks into two half discs, then the outgoing segment breaks into two, and the basepoint can lie in either part of the outgoing interval as shown in Figure \ref{fig:half_discs_basepoint}.
\begin{figure}
  \centering
  \includegraphics{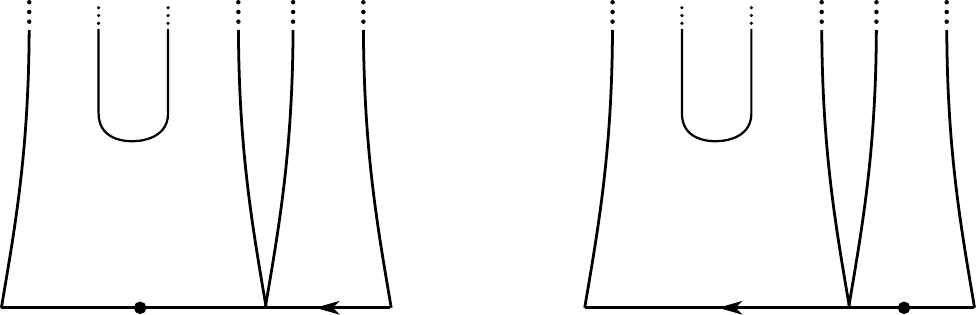}
  \caption{ }
  \label{fig:half_discs_basepoint}
\end{figure}

Given a sequence $\vx$ of chords with endpoints in $\Tq$, recall that we defined a moduli space $ \Pilbar_{d} (q,\vx,q)  $ in Section \ref{sec:moduli-space-half} of discs all of whose boundary segments map to $\Tq$, except the outgoing segment which is mapped to $Q$.  Let us consider the fibered product
\begin{equation*}
   \Pilbar_{d,1} (q,\vx,q) =   \Pilbar_{d} (q,\vx,q) \times_{\Pilbar_{d}}  \Pilbar_{d,1} .
\end{equation*}
Note that this definition makes sense if we chose Floer data on $  \Pilbar_{d,1}  $ coming from the forgetful map to $ \Pilbar_{d} $.   Moreover, using the length parametrisation of the outgoing segment, we shall fix an identification
\begin{equation} \label{eq:parametrise_half_disc_marked}
\Pilbar_{d} (q, \vx, q) \times [0,1] \to   \Pilbar_{d,1}(q, \vx, q).
\end{equation}
In particular, the product of a cubical chain in $ \Pilbar_{d}(q,\vx,q)   $ with $[0,1]$ defines a cubical chain in   $ \Pilbar_{d,1}(q,\vx,q) $, which gives us a preferred cubical fundamental chain for this moduli space.  Using the expression \eqref{eq:boundary_fundamental_chain} for the boundary of the fundamental chain of $\Pilbar_{d}(q,\vx,q)  $, we conclude that the fundamental chains of $  \Pilbar_{d,1}(q,\vx,q) $  satisfy the inductive relation:
\begin{multline} \label{eq:product_moduli_marked_point}
  \partial [ \Pilbar_{d,1}(q,\vx,q)]  = (-1)^{1+||\vx||} \left(   \iota_{1}( [\Pilbar_{d}(q,\vx,q)] ) - \iota_{0}( [\Pilbar_{d}(q,\vx,q)] ) \right) + \\  \sum_{\vx[1] \cup \vx[2]= \vx} (-1)^{\flat}  \left(  (-1)^{1+ ||\vx[2] ||}   [\Pilbar_{d_1,1}( q, \vx[1], q)] \times  [\Pilbar_{d_2}( q, \vx[2], q)]    +     [\Pilbar_{d_1}( q, \vx[1], q)] \times  [\Pilbar_{d_2,1}( q, \vx[2], q)] \right) \\ +  \sum_{x} (-1)^{\sharp+ \dim(\Discbar(y,\vx[2])  )  } [\Pilbar_{d_1,1}(q, \vx[1], q)]  \times [\Discbar(x,\vx[2])]    
\end{multline}
where $  \iota_{0} $ and $  \iota_{1} $  refer respectively to the inclusions at the two endpoints of the interval $[0,1]$, the rest of the inclusions are suppressed, and $|| \cdot ||$ is the sum of the reduced degrees  of the elements appearing in a sequence. By Lemma \ref{lem:moduli_half_discs_manifold_corners}, $1+|| \vx||$ is the dimension of the moduli space $\Pilbar_{d}(q,\vx,q)  $, which explains the appearance of the first sign as a Koszul sign arising from permuting the degree one operator $\partial$ with the fundamental chain.  In the second and third lines, $  ||\vx[2] || $ and $  \dim(\Discbar(y,\vx[2])  $ are introduced because the interval $[0,1]$ appears last in Equation \eqref{eq:parametrise_half_disc_marked}; at the relevant boundary stratum, we have to permute it past one of the factors in order to obtain the desired product decomposition.

 It is important to note that such a relation in general is satisfied in the quotient complex $\Lef_{*}(   \Pilbar_{d,1}(q,\vx,q) )$ but not necessarily in the usual cubical chain complex: by taking the product with $[0,1]$, a cubical chain in $ \Pilbar_{d_1}( q, \vx[1], q) \times  \Pilbar_{d_2}( q, \vx[2], q) $  produces a single chain supported on the boundary of  $  \Pilbar_{d,1}(q,\vx,q) $.    Because we take the quotient by the subcomplex \eqref{eq:chains_that_fit}, this chain is equal in $\Lef_{-*}(\sL Q)$ to the sum of the two chains coming from taking the product with $[0,1]$, and using the inclusion of the strata $ \Pilbar_{d_1}( q, \vx[1], q) \times  \Pilbar_{d_2,1}( q, \vx[2], q) $  and $  \Pilbar_{d_1,1}( q, \vx[1], q) \times  \Pilbar_{d_2}( q, \vx[2], q)$.

\subsection{An abstract moduli space of annuli}

\begin{figure}
  \centering
  \includegraphics{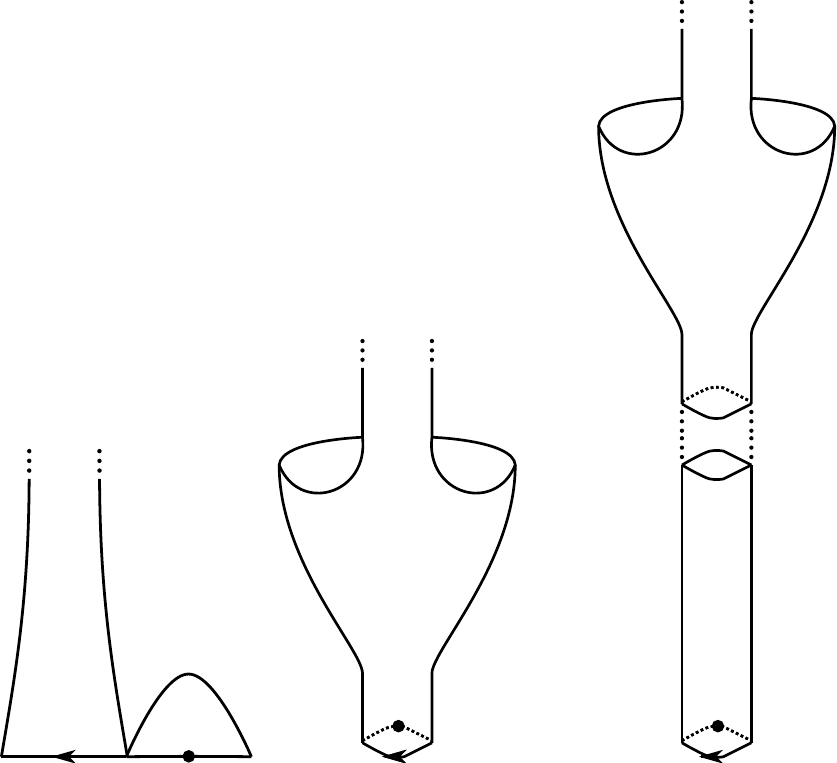}
  \caption{ }
  \label{fig:half_discs_1_input}
\end{figure}
 We write $\Ann^{-}_{1} $ for the moduli space of annuli with one marked point on a boundary component and an (incoming) puncture on the other and which are biholomorphic, for some positive real number $r$, to a domain 
\begin{equation}
  \label{eq:explicit_annulus}
  \{ z \in \bC | 1 \leq |z| \leq r \}
\end{equation}
with a puncture at $1$ and a marked point at $-r $.  We write $  \Ann^{-}_{d} $ for the space of annuli obtained by adding $d-1$ boundary punctures on the unit circle to one of the elements of $\Ann^{-}_{1} $: the resulting punctures are ordered counterclockwise ending with the one corresponding to $1$, and we call the boundary component carrying the marked point the \emph{outgoing circle}.  Writing $z_i$ for the coordinates which record the positions of the punctures on the circle, we fix the orientation
\begin{equation}
  \label{eq:orientation_moduli_annuli}
  dr \wedge dz_1 \wedge \cdots \wedge dz_{d-1}
\end{equation}
on $ \Ann^{-}_{d} $.

We shall compactify $ \Ann^{-}_{1} $ to a closed interval denoted $\Annbar^{-}_{1}$;  the outermost pictures in Figure \ref{fig:half_discs_1_input} show the broken curves representing the boundary.  More generally, the Deligne-Mumford compactification $\Annbar^{-}_{d}$ is a manifold with boundary whose codimension $1$ strata are the images of natural inclusions of the products
\begin{align}
\label{eq:boundary_moduli_annuli_interior_breaking}
\Discbar^{1} \times \Discbar_{d}^{1}    &   \\
\label{eq:boundary_moduli_annuli_coproduct_relation}
 \Pilbar_{d_1,1}  \times  \Pilbar_{d_2} &  \quad 1 \leq d_1 \leq d = d_1 +d_2 \\
 \label{eq:boundary_moduli_annuli_boundary_bubbling}
\Annbar^{-}_{d_1} \times \Discbar_{d_2}  &  \quad 1 \leq d_1 < d = d_1 + d_2 -1.
\end{align}
The first type of stratum arises from compactifying the end of the moduli space where the modular parameter $r$ reaches infinity, while the last type of stratum reflects the breaking of discs from the boundary component carrying the incoming marked points:  as such breaking could occur at any incoming point, there are $d_1$ distinct strata within the boundary of $ \Annbar^{-}_{1}  $ each being the image of the inclusion \eqref{eq:boundary_moduli_annuli_boundary_bubbling}.    The second type of stratum compactifies the end where the modular parameter converges to $0$.  The last incoming end and the marked point lie on different components of such a stable annulus since $\Annbar^{-}_{d}  $  consists of annuli for which these two points are ``opposite from each other.''  In particular, there are  $d_2$ different strata of the second type, distinguished by the position of the last incoming point of the annulus among the incoming points of $  \Pilbar_{d_2} $.  In Figure \ref{fig:half_discs_2_inputs}, we show generic elements of the four codimension $1$ strata for $d=2$.

As usual, we fix strip-like ends near the incoming ends of a stable annulus, which vary smoothly over the moduli space, and are compatible near a boundary stratum with the ones induced by gluing.
\begin{figure}
  \centering
  \includegraphics{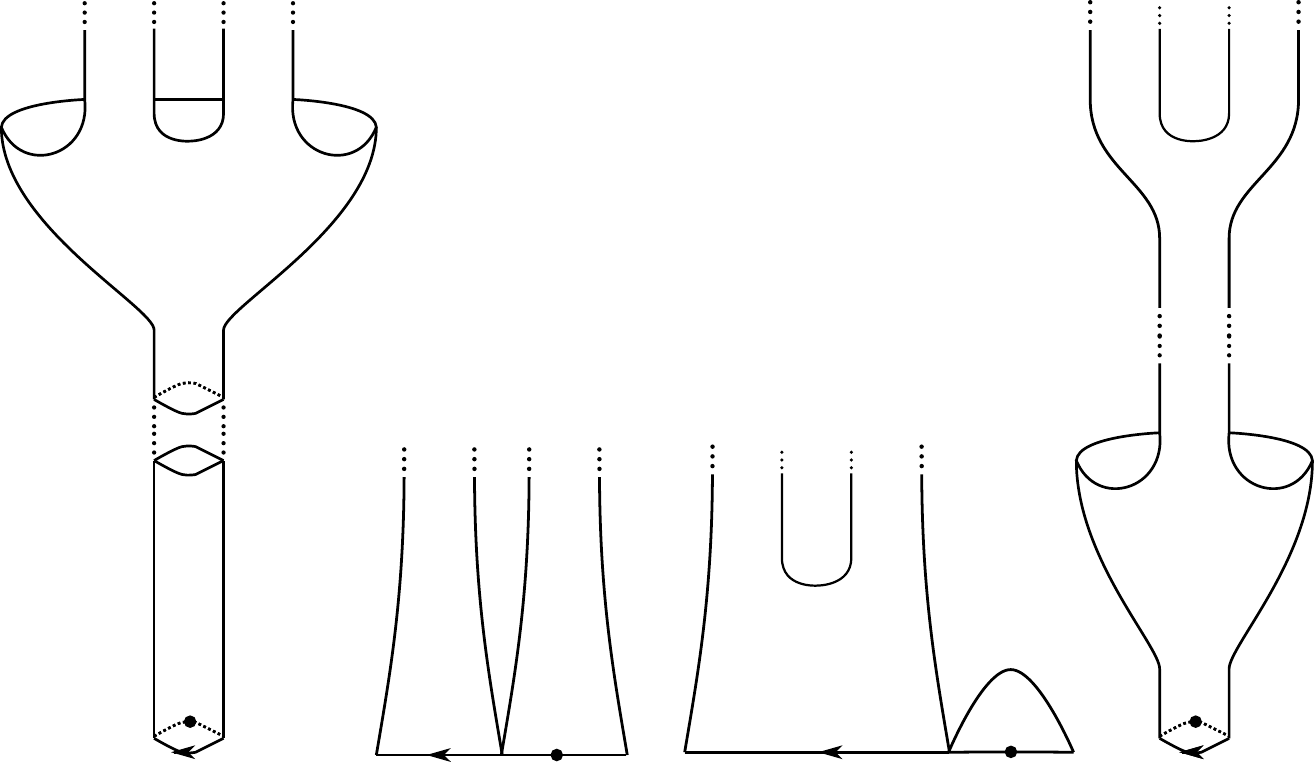}
  \caption{ }
  \label{fig:half_discs_2_inputs}
\end{figure}
\subsection{Floer data on annuli}
We start by making auxiliary choices to perturb the Cauchy-Riemann equation on an annulus:

\begin{defin} \label{def:floer_datum_annnuli}
A \emph{Floer datum} $D_{S}$ on a stable annulus $S \in \Annbar^{-}_{d}$ with $d$ positive boundary punctures $(\xi^1, \ldots, \xi^d) $  consists of the following choices on each component:
\begin{enumerate}
\item Time shifting map:  A map $ \rho_{S} \co \partial S \to [1,d]$ which is constant near each puncture and equals $1$ near the boundary component carrying the marked point.  We write $w_{k,S}$ for the value on the $k$\th end and set  
  \begin{equation}
    w_{0,S} = \sum_{k=1}^{d} w_{k,S}.
  \end{equation}
\item Basic $1$-form:  A closed $1$-form $\alpha_{S}$ whose restriction to the boundary vanishes and whose pullback under the $k$\th end agrees with $w_{k,S} dt$. 
\item Subclosed $1$-form: A $1$-form $\beta_{S}$ such that $d \beta_{S} \leq 0$, which vanishes near the ends  and agrees with a (constant) multiple of $\alpha$ near the outgoing segment
 \item Hamiltonian perturbations: a map $H_{S} \co S \to \sH(\TQ)$  such that the pullback of $ X_{S} \otimes \alpha_{S} $ under the $k$\th end agrees with  $  X_{\frac{H}{w_{k,S}} \circ \psi^{w_{k,S}}} \otimes dt$.  Moreover, near the outgoing segment on $S$ and the zero section in $\TQ$, we have
  \begin{equation} \label{eq:vanishing_inhomogeneous_dbar_boundary}
   X_{\frac{F \circ \psi^{w_{0,S}}}{w_{0,S}}} \otimes    \beta_{S} +  X_{S} \otimes \alpha_{S} = 0.
  \end{equation}
\item Almost complex structures:  A map $I_{S} \co S \to \sJ(\TQ)$ whose pullback under the $k$\th end agrees with  $ (\psi^{w_{k,S}})^{*} I_t $. 
\end{enumerate}
\end{defin}

If $S$ lies on the image of $\Pilbar_{d_1,1}   \times  \Pilbar_{d_2}$ as in Equation  \eqref{eq:boundary_moduli_annuli_coproduct_relation}, then we set $\beta_{S}  = 0$, and the restrictions of the universal Floer data $\bfD_{\cF} $  to $  \Pilbar_{d_2} $ and $ \Pilbar_{d_1}  $ determine the remaining data for $D_{S}$: the vanishing in Equation \eqref{eq:vanishing_inhomogeneous_dbar_boundary} is automatic because the restriction of $H_S$ to $\partial S$ agrees with $G$ which vanishes near $Q$ (this condition was imposed as part of Definition \ref{def:floer_datum_half_popsicle}).   If $d_2=d$, then $ \Pilbar_{0,1} $  may be identified with an infinite strip carrying a boundary marked point,  and we assume that $\alpha_{S}$ vanishes, while the almost complex structure is translation invariant and given by $I_{t}$.

On the other hand, if $S$ lies on the stratum (\ref{eq:boundary_moduli_annuli_interior_breaking}), we use $\bfD_{\OC}$ to define the Floer data on the component carrying the incoming boundary points, and use data conformally equivalent to the one fixed in the discussion preceding Equation (\ref{eq:boundary_disc_1_interior_1_boundary_output}) on the component carrying the marked point.  

\begin{defin}
A  \emph{universal and conformally consistent}  choice of Floer data for the homotopy  is a choice  $\bfD_{\cH}$ of Floer data  for every integer $d \geq 1$, and every representative of an element of  $\Annbar^{-}_{d}$ which vary smoothly over the compactified moduli space, such that the two natural Floer data on any irreducible component of a singular annulus are conformally equivalent, and which agree to infinite order with the data obtained by gluing near every boundary stratum.
\end{defin}

Given a sequence of chords $\vx = \{ x_1, \ldots, x_d \}$, we define the moduli space $ \Ann^{-}_{d}(\vx) $ to be the space of maps $u \co S \to M$ whose source is an arbitrary element of $  \Ann^{-}_{d} $, with asymptotic condition $  \psi^{w_{k,S}}  \circ  x_k  $ at the $k$\th  incoming end, which map the component carrying the marked point to $Q$ and the other boundary components to $\Tq$, and which solve the Cauchy-Riemann equation
\begin{equation}
\label{eq:dbar_equation_homotopy}
\left( du -   X_{H_S}   \otimes \alpha_{S} -  X_{\frac{F \circ \psi^{w_{0,S}}}{w_{0,S}}} \otimes \beta_{S} \right)^{0,1} = 0.
\end{equation}

\begin{lem}
For generic choices of Floer data $\bfD_{\cH}$, the Gromov bordification of $ \Ann^{-}_{d}(\vx)  $ is a compact manifold of dimension
\begin{equation*}
  d  - \sum_{k=1}^{d} |x_k|  
\end{equation*}
whose boundary decomposes into codimension $1$ strata which are the images of natural inclusions of the moduli spaces
\begin{align}
 \Discbar^{1}(y) \times \Discbar_{d}^{1}(y; \vx) & \quad y \in \Orbit \\ \label{eq:boundary_stratum_annuli_differential_CC_0}
 \Pilbar_{d_1,1} (q,\vx[1],q) \times   \Pilbar_{d_2} (q,\vx[2],q)   & \quad 0 \leq r < d_2 \leq d  = d_1 +d_2\\
\Annbar_{d_1}^{-}(\vx[1]) \times  \Discbar_{d_2}(x; \vx[2])  & \quad 1 \leq d_1 < d = d_1 + d_2 -1 \textrm{ and } x \in \Chord \label{eq:boundary_stratum_annuli_differential_CC}
\end{align}
where in the second type of stratum, $ \vx[1] = (x_{r+1}, \ldots, x_{r+d_1} ) $ and  $ \vx[2] = (x_{r+d_1+1}, \ldots, x_d, x_1, \ldots, x_{r}) $, while in the  last type of stratum $x$ agrees with one of the elements of $\vx[1] $, and the sequence obtained by removing $x$  from $\vx[1]$ and replacing it by the sequence $\vx[2]$ agrees with $\vx$ up to cyclic ordering.   \noproof
\end{lem}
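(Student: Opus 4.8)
The plan is to establish this in exactly the manner one proves Lemma \ref{lem:moduli_half_discs_manifold_corners} and verifies the properties of $\OC$ in \cite{generation}: a Sard--Smale transversality argument, a Gromov compactness argument for the bordification, and then the identification of the codimension $1$ limits with the product moduli spaces using the description of the Deligne--Mumford boundary of $\Annbar^{-}_{d}$ in \eqref{eq:boundary_moduli_annuli_interior_breaking}--\eqref{eq:boundary_moduli_annuli_boundary_bubbling}. The dimension formula I would read off from the index theorem, or more quickly from consistency with the gluing description at the $r \to \infty$ end: there the bordification looks like $\Discbar^{1}(y) \times \Discbar^{1}_{d}(y;\vx)$ glued along the modular parameter, so $\dim \Ann^{-}_{d}(\vx) = \dim \Disc^{1}(y) + \dim \Disc^{1}_{d}(y;\vx) + 1 = (n - |y|) + 0 + 1 = d - \sum_{k}|x_k|$ once $|y| = n - d + 1 + \sum_{k} |x_k|$ makes $\Disc^{1}_{d}(y;\vx)$ rigid.

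For compactness I would first observe that exactness of $\TQ$ and of the Lagrangians $Q$ and $\Tq$ bounds the energy and excludes sphere and disc bubbling, so that only Floer breaking and degeneration of the domain can occur. The $C^{0}$ bound comes, as in Lemma \ref{lem:moduli_half_discs_manifold_corners}, from the contact type condition on the almost complex structures near infinity together with the integrated maximum principle: choosing $r_0$ large enough that no chord of $\vx$ meets $\SQ \times [r_0,\infty)$, the inhomogeneous term vanishes on the boundary components carrying Lagrangian labels over the preimage of that region --- in particular along the outgoing circle, which maps to the zero section, by \eqref{eq:vanishing_inhomogeneous_dbar_boundary} --- so Lemma A.1 of \cite{generation} forces that part of the map to be constant. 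Gromov compactness then produces stable limits whose domains lie in $\Annbar^{-}_{d}$, and I would identify the codimension $1$ limits by matching the three types of Deligne--Mumford strata with the allowed map degenerations: the degeneration $r \to \infty$ pinches a neck on which the equation agrees with Floer's equation \eqref{eq:dbar_cylinder}, so the limit converges along the neck to a periodic orbit $y$ and lies in $\Discbar^{1}(y) \times \Discbar^{1}_{d}(y;\vx)$; the degeneration $r \to 0$ separates the last incoming end from the marked point and produces an element of $\Pilbar_{d_1,1}(q,\vx[1],q) \times \Pilbar_{d_2}(q,\vx[2],q)$ with the half-disc Floer data specified in the definition of $\bfD_{\cH}$; and a disc bubbling off the incoming circle (or a Floer trajectory breaking at one of its incoming ends) contributes $\Annbar^{-}_{d_1}(\vx[1]) \times \Disc_{d_2}(x;\vx[2])$. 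Everything else --- several simultaneous breakings, more than one neck, or a bubble together with a breaking --- has codimension at least $2$, so the bordification is a manifold with boundary rather than corners.

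Transversality I would dispatch with the usual Sard--Smale argument for domain-dependent Floer data, running the induction on $d$ so that $\bfD_{\cH}$ is already fixed and regular on the boundary strata of $\Annbar^{-}_{d}$ and may be perturbed freely over the interior; since the asymptotics $\vx$ are nonconstant chords every solution is nonconstant and has a point of its domain, lying in the region where the datum is genuinely perturbed, at which the linearisation can be made surjective. The step I expect to be the main obstacle is the compactness analysis above: confirming that the $r \to 0$ and $r \to \infty$ degenerations of $\Annbar^{-}_{d}$, once decorated with maps and combined with the possible strip breakings at the incoming ends, reproduce precisely the strata \eqref{eq:boundary_stratum_annuli_differential_CC_0}--\eqref{eq:boundary_stratum_annuli_differential_CC} and nothing extraneous, and in particular that the neck degeneration converges to a single orbit rather than a broken chain of Floer cylinders --- both of which come down to the generic rigidity of the lower-dimensional moduli spaces that appear and to the infinite-order agreement of $\bfD_{\cH}$ with the glued data near each boundary stratum.
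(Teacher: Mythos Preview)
The paper does not prove this lemma: it is stated with a \verb|\noproof| marker (just a \qed), so there is no argument to compare against. Your sketch is the standard one and is correct in outline; it is essentially what the author would have in mind, since it recycles the compactness argument of Lemma~\ref{lem:moduli_half_discs_manifold_corners} and the transversality package used throughout, combined with the description \eqref{eq:boundary_moduli_annuli_interior_breaking}--\eqref{eq:boundary_moduli_annuli_boundary_bubbling} of $\partial\Annbar^{-}_{d}$.

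One small point worth tightening: your invocation of Lemma~A.1 of \cite{generation} for the $C^{0}$ bound needs a word about the subclosed $1$-form $\beta_{S}$. Near infinity the inhomogeneous term in \eqref{eq:dbar_equation_homotopy} is not just $X_{H_{S}}\otimes\alpha_{S}$ but also contains the $\beta_{S}$ piece, and you want $d\beta_{S}\le 0$ (part of Definition~\ref{def:floer_datum_annnuli}) together with the usual subclosedness of $\alpha_{S}$ so that the energy estimate underlying the integrated maximum principle goes through. This is routine, but as written you only mention the vanishing at the outgoing circle via \eqref{eq:vanishing_inhomogeneous_dbar_boundary}, which controls the boundary term and not the interior of the region $|p|\ge r_{0}$.
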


Explicitly, we encounter two possibilities in Equation \eqref{eq:boundary_stratum_annuli_differential_CC}: either (1) there exists an integer $k$ such that  $\vx[1] = (x_1, \ldots, x_k, x, x_{k+d_2+1}, \ldots, x_d)$ and $\vx[2]=(x_{k+1}, \ldots, x_{k+d_2} )  $, or (2) there exists an integer $r$ such that  $\vx[1]=(x_{r+1},  \ldots, x_{r+d_1-1}, x )  $ and  $\vx[2]=(x_{r+d_1}, \ldots, x_{d}, x_1, \ldots, x_{r} )  $.    These are distinguished by whether $x_{d_2}$ lies in $\vx[1] $ or $ \vx[2] $.
% In this second case, as for the stratum \eqref{eq:boundary_stratum_annuli_differential_CC_0}, the sign of the permutation which reorders the inputs in the reduced bar complex is given by the expression in Equation \eqref{eq:sign_hochschild}:
%\begin{equation*}
%   \shuffle_{r}^{r+d_1} =  \maltese_{1}^{r} \cdot (1+ \maltese_{r+1}^{d}) +  \maltese_{r+d_1+1}^{d-1}.
%\end{equation*}

\subsection{Orienting the moduli space of annuli}
As it is a manifold with boundary, the moduli space $ \Annbar_{d}^{-}(\vx)  $ admits a relative fundamental chain; we have already chosen such a relative fundamental chain for $  \Discbar^{1}(y) $ in Section \ref{sec:count-half-cylind}, for $ \Pilbar_{d}(q,\vx,q) $ in Section \ref{sec:a_infty-homomorphism}, and for  $ \Pilbar_{d,1}(q,\vx,q) $ in Section \ref{sec:adding-marked-point}.  In $\Lef_{*}(  \Annbar_{d}^{-}(\vx)  ) $,  these classes can be chosen for all sequences $\vx$ so that
\begin{multline} \label{eq:fundamental_class_annuli}
  \partial [  \Annbar_{d}^{-}(\vx)  ]  =  \sum (-1)^{(d-1)(n - |y|)}  [\Discbar^{1}(y)] \times [\Discbar_{d}^{1}(y; \vx)] +  \\
 \sum  (-1)^{ \frac{n(n+1)}{2}  +\diamond_{r}^{d_1}} [\Pilbar_{d_1,1} (q,\vx[1],q)]  \times  [\Pilbar_{d_2} (q,\vx[2],q)]   +  \sum_{x_d \in \vx[1]}  (-1)^{1+\sharp}   [\Annbar_{d_1}^{-}(\vx[1])] \times  [\Discbar_{d_2}(x; \vx[2])] \\
+ \sum_{x_d \in \vx[2]}  (-1)^{d_1 +1  +   \diamond_{r}^{d_1} + d_{2} |\vx[2]| }   [\Annbar_{d_1}^{-}(\vx[1])] \times  [\Discbar_{d_2}(x; \vx[2])]
\end{multline}
where the signs $\sharp$ is given by Equation  \eqref{eq:sign_sharp}, and the new sign is
\begin{equation} \label{eq:diamond_sign}
  \diamond_{r}^{d_1}=  r(d+1) + \left(\sum_{k=1}^{r} |x_{k}| \right)  \cdot \left(\sum_{k=r+1}^{d} |x_{k}| \right)  +d_{2}\left(\sum_{k=r+1}^{r+d_1} |x_{k}| \right).
\end{equation}
The proof of the existence of such classes proceeds by induction: in the inductive step, one has to prove that the right hand side of Equation \eqref{eq:fundamental_class_annuli} is closed.  The analogue for the moduli space of half-discs is Equation \eqref{eq:boundary_fundamental_chain}, which we can generalise to our setting using the choice of fundamental chains  on half-discs with a marked point on the outgoing segment which we fixed in Equation \eqref{eq:product_moduli_marked_point}.

To prove the correctness of the signs, one must compare various product orientations with those induced at the boundary of the moduli space.  The computation for each term is separate, but since there is no fundamental difference between them, we shall only illustrate one of them.  The starting point is the fact that the orientation on $ \Annbar_{d}^{-}(\vx)   $ comes from a canonical up to homotopy isomorphism (see, e.g. Lemma C.4 of \cite{generation} and Appendix \ref{sec:signs-orientations} for the generalisation to the relatively $\Spin$ case)
\begin{equation} \label{eq:orientation_annuli}
  \lambda( \Annbar_{d}^{-}(\vx)    ) \cong \lambda(  \Annbar_{d}^{-} ) \otimes \lambda(\Tq) \otimes \lambda(Q) \otimes o_{x_1}^{-1} \otimes \cdots \otimes o_{x_d}^{-1}.
\end{equation}
If we now consider the second term on the right hand side of Equation \eqref{eq:fundamental_class_annuli}, the orientations of the factors come from isomorphisms
\begin{align*}
 \lambda( \Pilbar_{d_1,1} (q,\vx[1],q)) & \cong  \lambda( \Pilbar_{d_1} ) \otimes o_{q} \otimes o_{x_{r+1}}^{-1} \otimes  \cdots \otimes o_{x_{r+d_1}}^{-1} \otimes o_{q}^{-1} \otimes \lambda([0,1])\\
\lambda(\Pilbar_{d_2} (q,\vx[2],q)) & \cong  \lambda( \Pilbar_{d_2} ) \otimes o_{q}^{-1}  \otimes o_{x_{r+d_1+1}}^{-1} \otimes \cdots \otimes o_{x_{d}}^{-1} \otimes o_{x_{1}}^{-1} \otimes  \cdots \otimes o_{x_{r}}^{-1} \otimes o_{q}^{-1}.
\end{align*}
Taking the tensor product of these two expressions gives the product orientation.  In order to arrive at Equation \eqref{eq:orientation_annuli} we must (i)  move each copy of $o_{q}$ next to $o_{q}^{-1}$, and cancel them  (ii) move the copy of $\lambda([0,1]) $ next to $\lambda( \Pilbar_{d_1} )   $  (iii) move $  \lambda( \Pilbar_{d_2} ) $ to be adjacent to $  \lambda( \Pilbar_{d_1} ) $ (iv) identify the sign difference between the product orientation on the abstract moduli spaces $ \Pilbar_{d_1,1}  \times  \Pilbar_{d_2}$ and its boundary orientation (v) reorder the (inverse) orientation lines associated to the inputs and (vi) identify $\lambda(\Tq) \otimes \lambda(Q)  $ with the trivial line.  Since the gradings have been chosen in such a way that $q$ has degree $0$, the first operation does not contribute any Koszul sign, while the parity associated to the others is given by the following sum whose terms correspond in order to the operations listed above:
\begin{equation*} 
\sum_{k=r+1}^{r+d_1} |x_k|  + (d_{2}-1)\left(  \sum_{k=r+1}^{r+d_1} |x_k|  \right)  + (d-1) r + \left(  \sum_{k=1}^{r} |x_k|  \right) \left(  \sum_{k=r+1}^{d} |x_k|  \right) + \frac{n(n+1)}{2}.
\end{equation*}
The appearance of the last term is explained in the proof of Lemma 6.8 in \cite{generation}.   Note that this sum exactly reproduces the sign that appears in the second line of the right hand side in  Equation \eqref{eq:fundamental_class_annuli}.

Having chosen fundamental chains, we define a map
\begin{equation*}
  \cH \co CC_{*}( CW^{*}_{b}(\Tq)) \to C_{-*}( \sL Q )
\end{equation*}
by linearly extending the formula
\begin{equation}  \label{eq:sign_homotopy}
  x_{d} \otimes \ldots \otimes x_{1} \mapsto (-1)^{|x_d|+\dagger + d |\vx|} \ev_{*}( [  \Annbar_{d}^{-}(\vx)  ]  ) .
\end{equation}
In characteristic $2$, the fact that $\cH$ is a homotopy between the two compositions in Diagram \eqref{eq:commutative_diagram_symplectic_string} follows from Equation \eqref{eq:fundamental_class_annuli}.  The left hand side is $\partial \circ \cH$, while the first term on the right corresponds to $\G \circ  CC_{*}(\cF)  $, the second to $  \CL  \circ  \OC $, and the last two to $\cH \circ b$ ($b$ is the Hochschild differential from Equation \eqref{eq:hochschild_differential}). Since $\G$ was defined separately on words of length $1$ and $2$, we note that the  composition of $\G$ with the component of $CC_{*}(\cF ) $  whose image consists of words of length $1$ corresponds to the case where $d_1=0$ in the first term of the right hand side.  Indeed, the only element of  $\Pilbar_{0,1} (q,\emptyset,q)  $ is a constant triangle at $q$, and $\G \circ \cF(\vx)$ is then equal to the image of the fundamental chain $\Pilbar_{d} (q,\vx,q)  $ in the chains over the free loop space.  The case $d_1 \neq 0$ recovers the terms in $\G$ defined on words of length $2$.

\begin{lem}
$\cH$ is a homotopy between $(-1)^{\frac{n(n+1)}{2}} \G \circ  CC_{*}(\cF)  $ and $ \CL  \circ  \OC$.
\end{lem}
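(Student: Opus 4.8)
The plan is to extract the homotopy identity directly from the boundary decomposition of the fundamental chains of the annuli moduli spaces recorded in Equation~\eqref{eq:fundamental_class_annuli}, and then to verify that the normalising sign in the definition~\eqref{eq:sign_homotopy} of $\cH$ turns the four families of codimension one strata into precisely the four terms $\partial\circ\cH$, $(-1)^{\frac{n(n+1)}{2}}\,\G\circ CC_{*}(\cF)$, $\CL\circ\OC$ and $\cH\circ b$. The topological input --- that $\partial\Annbar^{-}_{d}(\vx)$ is covered by the strata \eqref{eq:boundary_stratum_annuli_differential_CC_0}--\eqref{eq:boundary_stratum_annuli_differential_CC}, and that each of them evaluates into $\Lef_{-*}(\sL Q)$ --- is the preceding lemma together with the compatibility of the evaluation maps $\ev_{*}$ with the operadic gluing maps (the annulus analogue of the commuting squares of Section~\ref{sec:a_infty-homomorphism}); the content of the proof is therefore sign bookkeeping, and this is also where the only real difficulty lies. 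As already observed, over $\bZ/2$ the statement is immediate from Equation~\eqref{eq:fundamental_class_annuli}, so the task is to reinstate the signs.

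Applying the chain map $\ev_{*}$ to Equation~\eqref{eq:fundamental_class_annuli} gives $\partial\bigl(\ev_{*}[\Annbar^{-}_{d}(\vx)]\bigr)$ on the left, which up to the scalar in~\eqref{eq:sign_homotopy} is $\partial\cH(x_{d}\otimes\cdots\otimes x_{1})$. On the right one identifies each stratum as follows. On $\Discbar^{1}(y)\times\Discbar_{d}^{1}(y;\vx)$ the Floer data were chosen near $r=\infty$ by gluing $\bfD_{\OC}$ to the data of Section~\ref{sec:count-half-cylind}, so the second factor contributes the rigid count defining $\OC_{d}$ into the orbit $y$ and the first, after evaluation of its boundary to $\sL Q$, contributes $\CL$; hence this stratum realises $\CL\circ\OC$. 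On $\Pilbar_{d_{1},1}(q,\vx[1],q)\times\Pilbar_{d_{2}}(q,\vx[2],q)$ one has $\beta_{S}=0$ and the data restrict to $\bfD_{\cF}$; the second factor computes $\cF^{d_{2}}$, while the marked half-disc in the first factor, under the length parametrisation~\eqref{eq:parametrise_half_disc_marked}, realises the basepoint-moving operation $\btimes$ of~\eqref{eq:moving_basepoint} applied to the output of $\cF^{d_{1}}$. Summing over the cyclic splittings $\vx=\vx[1]\cup\vx[2]$ reproduces the formula~\eqref{eq:CC_map} for $CC_{*}(\cF)$ postcomposed with $\G^{(2)}$ from~\eqref{eq:G_map_words_2} when $d_{1}\geq 1$, and --- since the only element of $\Pilbar_{0,1}(q,\emptyset,q)$ is a constant triangle at $q$ --- with $\G^{(1)}$ from~\eqref{eq:G_map_words_1} when $d_{1}=0$; so this stratum realises $\G\circ CC_{*}(\cF)$. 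Finally, on the two strata $\Annbar^{-}_{d_{1}}(\vx[1])\times\Discbar_{d_{2}}(x;\vx[2])$ the second factor contributes $\mu_{d_{2}}$, and the alternative $x_{d}\in\vx[1]$ versus $x_{d}\in\vx[2]$ distinguishes the ``inner'' from the ``cyclically wrapped'' $\mu$-terms of the Hochschild differential~\eqref{eq:hochschild_differential}, so together these two strata realise $\cH\circ b$. The identification of the geometric maps here relies on the same commuting diagrams as in Section~\ref{sec:a_infty-homomorphism}, and everything is carried out in $\Lef_{*}$ rather than in raw cubical chains, which is harmless since $\Lef_{*}$ still computes $H_{*}(\sL Q)$.

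The main work --- and the step I expect to be the obstacle --- is to check that the scalars agree. One must track, term by term, the interplay of the sign $(-1)^{|x_{d}|+\dagger+d|\vx|}$ of~\eqref{eq:sign_homotopy}; the analogous normalisations of $\OC_{d}$, of $\CL$, of $\cF^{d}$ in~\eqref{eq:twist_A_infty_homomorphism}, and of $\G$ (the factor $(-1)^{i}$ on length-one words and the overall minus sign in~\eqref{eq:G_map_words_2}); the Hochschild signs $\maltese$ and $\shuffle$ of~\eqref{eq:sign_hochschild}; the geometric signs $\diamond_{r}^{d_{1}}$, $\sharp$, $\flat$ and $(-1)^{(d-1)(n-|y|)}$ of~\eqref{eq:fundamental_class_annuli} and~\eqref{eq:product_moduli_marked_point}; and the Koszul signs coming from permuting the degree one operator $\partial$ past a fundamental chain, from moving the $[0,1]$ factor of~\eqref{eq:parametrise_half_disc_marked} into position, and from reordering the orientation lines $o_{x_{i}}$ and $o_{q}$. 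The term $\frac{n(n+1)}{2}$ should emerge, exactly as in the proof of Lemma~6.8 of~\cite{generation}, from identifying $\lambda(\Tq)\otimes\lambda(Q)$ with the trivial line in~\eqref{eq:orientation_annuli}, and it survives attached to the $\G\circ CC_{*}(\cF)$ term because the $\CL\circ\OC$ stratum instead carries the sign $(-1)^{(d-1)(n-|y|)}$. This is a long but routine computation of the same kind as Appendix~A of~\cite{string-top} and the sign verifications in~\cite{generation}: no single step is conceptually deep, but so many independent conventions feed in that the bookkeeping is delicate, and the safe route is to verify the relation over $\bZ/2$ first and then reintroduce the signs one source at a time.
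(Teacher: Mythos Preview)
Your proposal is correct and follows the same strategy as the paper's own sketch: identify the four families of boundary strata of $\Annbar^{-}_{d}(\vx)$ with the four terms $\partial\circ\cH$, $\CL\circ\OC$, $\G\circ CC_{*}(\cF)$, and $\cH\circ b$, observe that this already proves the claim over $\bZ/2$, and reduce the integral statement to sign bookkeeping. The paper's proof is likewise labelled a sketch and only carries out one representative sign check explicitly---namely the $\G^{(2)}\circ CC_{*}^{(2)}(\cF)$ contribution ($d_{1}\neq 0$), where it writes down the signs coming from $\G$, from $\shuffle_{r}^{r+d_{1}}$, and from the two $\cF$ factors, and asks the reader to verify that they sum to $|x_{d}|+\dagger+d|\vx|+\diamond_{r}^{d_{1}}$; your account lists the same ingredients but at one level of abstraction higher, without singling out a case to compute in full.
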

\begin{proof}[Sketch of proof:]
Again, we shall only verify that the sign for the contribution of $ \G \circ  CC_{*}(\cF)  $ is correct.  In fact, since $  \G  $  was defined separately for words of length $1$ and $2$, we shall only do this in a further special case when $d_1 \neq 0 $.  So our goal is to go through the signs in the definition of $  \G^{(2)} \circ  CC_{*}(\cF)  $, and ensure that, together with  Equation \eqref{eq:diamond_sign}, they add up precisely to the sign appearing in Equation \eqref{eq:sign_homotopy}.  Start by noting that
\begin{equation*}
  \G( [\Pilbar_{d_2} (q,\vx[2],q)] ,  [\Pilbar_{d_1} (q,\vx[1],q)]   ) = (-1)^{ d_2 - 1 + | \vx[2]|  } \ev_{*}(  [\Pilbar_{d_1,1} (q,\vx[1],q)]  \times  [\Pilbar_{d_2} (q,\vx[2],q)] )
\end{equation*}
coming from the fact that the factor $[0,1]$ appears last in Equation \eqref{eq:moving_basepoint} instead of just before $\Pilbar_{d_2} (q,\vx[2],q)$.  Next, see from Equation \eqref{eq:CC_map} that 
\begin{equation*}
CC_{*}^{(2)}(\cF)(a_d, \ldots, a_1) = \sum (-1)^{ \shuffle_{r}^{r+d_1} } \cF^{d_2}(a_{r},  \ldots, a_{1} , a_{d}, \ldots, a_{r+d_1+1})   \otimes \cF^{d_1}(a_{r+d_1 }, \ldots, a_{r+1} ) .
\end{equation*}
Finally, the definitions of $\cF^{d_1}$ and $\cF^{d_2}$ incorporate, via Equation \eqref{eq:twist_A_infty_homomorphism} the signs
\begin{align*}
  \dagger_{1} + d_1 | \vx[1]| &= \sum_{k=r+1}^{r+d_1} (r+k + d_1) |x_k|  \\
\dagger_{2} + d_2 | \vx[2]| & = \sum_{k=1}^{r} (r+k) |x_k| +   \sum_{k=r+d_1+1}^{d} (d+r+k) |x_k|.
\end{align*}
The reader can now directly check that
\begin{equation*}
    |x_d|+\dagger + d |\vx| = \diamond_{r}^{d_1} + d_2 - 1 + | \vx[2]|  + \shuffle_{r}^{r+d_1} +  \dagger_{1} + d_1 | \vx[1]|  + \dagger_{2} + d_2 | \vx[2]|.
\end{equation*}
\end{proof}
\appendix

\section{Orientations induced by relative $\Spin$ structures} \label{sec:signs-orientations}
In this Section, we discuss orientations of moduli spaces of holomorphic discs with interior punctures and of holomorphic annuli; these appear in the construction of the maps $\OC$ and $\CO$, as well the map $\cH$.  In the absence of the background class $b$ these orientations were constructed in \cite{generation}, while the case of holomorphic discs was discussed in \cite{string-top}.

We consider a general situation, in which $M$ is a symplectic manifold with vanishing first Chern class, $\Orbit$ a collection of (Hamiltonian) orbits in $M$, $L$ and $Q$ a pair of graded Lagrangian submanifolds which are relatively $\Spin$ for the same background class $b \in H^{2}(M, \bZ_{2})$ which is the second Stiefel-Whitney class of a vector bundle $E_{b}$,  and $\Chord$ the set of (Hamiltonian) chords with endpoints on $L$.  We choose relative $\Spin$ structures on $L$, $Q$, each chord $x \in \Chord$ (i.e. on the associated Lagrangian path $\Lambda_{x}$) and on intersections of $L$ and $Q$ thought of as chords for the trivial Hamiltonian.  

We remind the reader of the following result which we will repeatedly use:
\begin{lem} \label{lem:equivalence_Spin_structures}
For each vector bundle $E$,  there is a bijection between the set of  $\Spin$ structures on $E$ up to isomorphism and those on $E' \oplus E$, obtained by taking the direct sum with a fixed $\Spin$ structure on a vector bundle $E'$.  \noproof
\end{lem}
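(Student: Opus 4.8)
The plan is to exhibit the direct-sum operation as an equivariant map between torsors over $H^{1}(B;\bZ_{2})$, where $B$ denotes the base, and then to invoke the elementary fact that an equivariant map between non-empty torsors over a common group is automatically a bijection. First I would dispose of the degenerate case: since the chosen $\Spin$ structure on $E'$ forces $w_{1}(E')=w_{2}(E')=0$, we have $w_{1}(E'\oplus E)=w_{1}(E)$ and $w_{2}(E'\oplus E)=w_{2}(E)$, so $E'\oplus E$ admits a $\Spin$ structure precisely when $E$ does; if neither does, both sides of the claimed bijection are empty and there is nothing to prove. So I would assume $E$, and hence $E'\oplus E$, is orientable with vanishing second Stiefel--Whitney class, and recall the classical fact that the set of isomorphism classes of $\Spin$ structures on such a bundle over $B$ is a non-empty torsor over $H^{1}(B;\bZ_{2})$, the action being by twisting the $\bZ_{2}$-valued \v{C}ech cocycle comparing two lifts of the transition functions.

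Next I would make the direct-sum map explicit on \v{C}ech data. Fix a good cover $\{U_{i}\}$ of $B$ trivialising both bundles, with transition functions $g_{ij}\co U_{ij}\to SO(n)$ for $E$ and fixed lifts $\tilde h_{ij}\co U_{ij}\to\Spin(m)$ recording the chosen $\Spin$ structure on $E'$. A $\Spin$ structure on $E$ is then a system of lifts $\tilde g_{ij}\co U_{ij}\to\Spin(n)$ of $g_{ij}$ satisfying the cocycle identity, taken modulo coboundaries valued in $\ker(\Spin(n)\to SO(n))=\{\pm1\}$. Writing $\phi\co\Spin(m)\times\Spin(n)\to\Spin(m+n)$ for the standard homomorphism covering the block inclusion $SO(m)\times SO(n)\hookrightarrow SO(m+n)$, whose kernel is generated by $(-1,-1)$, I would take $\phi(\tilde h_{ij},\tilde g_{ij})$ as the lifts of the transition functions of $E'\oplus E$; since $\phi$ is a homomorphism this again satisfies the cocycle identity, so it defines a $\Spin$ structure on $E'\oplus E$.

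The crux, and the step I expect to require genuine if routine care, is to check that this assignment intertwines the two $H^{1}(B;\bZ_{2})$-actions; once that is in hand, the rest is formal. Twisting $\tilde g_{ij}$ by a $\bZ_{2}$-cocycle $\epsilon_{ij}\in\{\pm1\}$ multiplies $\phi(\tilde h_{ij},\tilde g_{ij})$ by $\phi(1,\epsilon_{ij})$, so I must identify the homomorphism $\epsilon\mapsto\phi(1,\epsilon)$ from $\ker(\Spin(n)\to SO(n))$ to $\Spin(m+n)$. Since the image of $\phi(1,-1)$ in $SO(m+n)$ is the identity while $(1,-1)\notin\ker\phi$, we get $\phi(1,-1)=-1$ in $\Spin(m+n)$; that is, this homomorphism is exactly the canonical identification $\ker(\Spin(n)\to SO(n))\cong\ker(\Spin(m+n)\to SO(m+n))$ of the two copies of $\bZ_{2}$. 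Hence coboundaries go to coboundaries and the construction descends to a well-defined map on isomorphism classes which is $H^{1}(B;\bZ_{2})$-equivariant; since the source is a non-empty torsor and the map produces an element of the target, the target is a non-empty torsor as well and the map is a bijection. An alternative, essentially equivalent, route would phrase the same argument in terms of lifts of the classifying map $B\to BSO$ along the principal $K(\bZ_{2},1)$-fibration $BSpin\to BSO$ together with the $H$-space structure on $BSpin$, but I would keep the \v{C}ech description as primary since it makes the relevant kernel bookkeeping most transparent.
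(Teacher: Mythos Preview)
Your argument is correct: the direct-sum construction is $H^{1}(B;\bZ_{2})$-equivariant because $\phi(1,-1)=-1$ in $\Spin(m+n)$, and an equivariant map of non-empty torsors is a bijection. The paper offers no proof of this lemma (it is stated with a bare \textsc{qed} symbol and used as a standard fact), so there is nothing to compare against; your \v{C}ech-cocycle treatment is the expected verification.
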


Let us now review the construction of the isomorphisms required to define the map in Equation \eqref{eq:iso_orientation_differential} and its analogue for discs with one output and an arbitrary number of inputs.  Assume that $S$ has $d$ positive ends, whose images under $u$ converge to chords $x_{1}, \cdots, x_{d}$, and one negative end converging to $x_0$.  For each such asymptotic end $x_k$, the contractibility of the disc equips $(x_k \circ t)^{*}(E_{b})$ with a unique $\Spin$ structure up to isomorphism, which we can then restrict to the boundary.  In particular, the data of Equation \eqref{eq:Pin_structure_chord} is equivalent to choosing a $\Spin$ structure on $\Lambda_{x_k}$, which is the boundary condition of the operator $D_{x_k}$ associated to the chord $x_k$.

Now, the main point is that as long as $S$ is contractible, the pullback of $E_{b}$ under a map $u \co S \to M$ still admits a unique $\Spin$ structure up to isomorphism, which induces such a $\Spin$ structure on the restriction to each boundary component.  Assuming the boundary maps to $L$ under $u$, Lemma \ref{lem:equivalence_Spin_structures} together with the data fixed in Equation \eqref{eq:rel_pin_structure} determine a $\Spin$ structure on the pullback by $u|\partial S$ of the tangent space of $L$.

Having produced $\Spin$ structures on the boundary conditions of the linearisation $D_{u}$ of the Cauchy-Riemann operator at $u$, we can use by now standard methods (see, e.g. Section 11 of \cite{seidel-book}) to produce the isomorphism of Equation \eqref{eq:isomorphism_disc}:   one glues the linearisation $D_{u}$ of the Cauchy-Riemann operator at $u$ to the operators $D_{x_k}$ associated to all inputs to obtain an operator
\begin{equation} \label{eq:glued_operator}
  D_{u} \# D_{x_d} \# \cdots \# D_{x_1} 
\end{equation}
on a disc with one (outgoing) end.   The boundary conditions of this operator carry a $\Spin$ structure from the above considerations, and the fact that the Lagrangian $L$ is graded implies that if we trivialise the associated vector bundle over the disc on whose sections it acts, then this operator is homotopic, through the space of Fredholm operators, to $D_{x_0}$.  Up to homotopy, the choice to be made is a  deformation of the boundary conditions to those of the operator $D_{x_0}$; there are two such choices, corresponding to the fact that the fundamental group of the space of based paths on the Grassmannian of Lagrangians is isomorphic to $\bZ_{2}$.  Each of these choices induces an isomorphism in Equation \eqref{eq:isomorphism_disc} and we fix the isotopy to be the one whose associated family of boundary conditions carries a $\Spin$ structure which restricts, at both ends, to those just fixed for $\Lambda_{x_0} $  and for the boundary conditions of the glued operator in Equation \eqref{eq:glued_operator}.

Exactly the same construction yields orientations for the moduli spaces of half-discs with boundary on $L$ and $Q$, which were used in Section \ref{sec:moduli-space-half} to construct a map from the wrapped Floer complex to the algebra of chains on the based loop space.

However, the part of this argument relying on the uniqueness of the $\Spin$ structure on the pullback of $E_{b}$ fails for a general Riemann surface which is not contractible.  Nonetheless, a $\Spin$ structure on the restriction of this pullback to a subset whose inclusion induces an isomorphism on cohomology is equivalent to such a structure on  the entire surface.  We already used this idea in Section \ref{sec:constr-twist-sympl} when constructing the differential computing a twisted version of symplectic cohomology.  We shall implement it more generally for half-cylinders and annuli:  Let $y$ be an orbit and let $\vx=\{ x_1, \ldots, x_d \}$ be chords  with endpoints on $L$:
\begin{lem}
A choice of relative $\Spin$ structure on $Q$ induces an orientation of the moduli spaces $\Discbar^{1}(y)$ relative to $  o_{y} \otimes  \kappa_{y}^{b}$  (see Equation \eqref{eq:iso_tangent_half_cylinder}).  Choices of (i) a $\Spin$ structure on $y^{*}(E_{b})$, (ii) a relative $\Spin$ structures on $L$ and on the chords $x_k$ as in \eqref{eq:Pin_structure_chord}, and (iii) orientations on the abstract moduli spaces induce orientations of the moduli spaces  $ \Discbar_{d}^{1}(y; \vx)$ and  $ \Ann_{d_1}^{-}(\vx[1]) $ relative to $o_{x_d} \otimes \cdots o_{x_1}$.
\end{lem}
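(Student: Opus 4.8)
The plan is to run, as much as possible verbatim, the construction recalled in the first part of this appendix: linearise the Cauchy--Riemann operator at a solution $u$, glue on the standard cap operators $D_{x_{k}}$ at the incoming ends and the plane operator $D_{y}$ at the interior end, trivialise the resulting bundle over a disc (possible because the Lagrangians are graded and $c_{1}(\TQ)=0$), and deform through Fredholm operators to a model operator whose determinant line is canonically $\lambda(Q)$, a point, or $o_{y}$ as appropriate. The only point requiring care is that the three families of surfaces in question are not simply connected, so the pullback of $E_{b}$ need not carry a unique $\Spin$ structure; as in the discussion preceding the statement, the remedy is to restrict this pullback to a subsurface on which its $\Spin$ structure is pinned down, and then argue that the chosen structure propagates.

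For $\Discbar^{1}(y)$ and $\Discbar_{d}^{1}(y;\vx)$ the domain deformation retracts onto a loop encircling the interior puncture, and along such a loop $u^{*}(E_{b})$ is isomorphic to $y^{*}(E_{b})$. Hence a $\Spin$ structure on $y^{*}(E_{b})$ --- equivalently a trivialisation of $\kappa_{y}^{b}$, which is the datum (i) --- propagates to a $\Spin$ structure on $u^{*}(E_{b})$ over the whole surface. Restricting this structure to the boundary components and combining it, via Lemma \ref{lem:equivalence_Spin_structures}, with the chosen relative $\Spin$ structure on $Q$, respectively on $L$, equips the boundary conditions of $D_{u}$ with $\Spin$ structures; gluing on the caps $D_{x_{k}}$ (using the relative $\Spin$ structures of (ii)) and $D_{y}$, and then selecting, among the two homotopies to the model operator, the unique one carrying a $\Spin$ structure restricting compatibly at both ends, produces the isomorphisms. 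For $\Discbar^{1}(y)$ this is precisely \eqref{eq:iso_tangent_half_cylinder}, the dependence on the trivialisation of $\kappa_{y}^{b}$ accounting for the factor $\kappa_{y}^{b}$ appearing there; for $\Discbar_{d}^{1}(y;\vx)$ one obtains an orientation relative to $o_{x_{d}} \otimes \cdots \otimes o_{x_{1}}$ once the abstract moduli space is oriented as in (iii), which is the relatively $\Spin$ refinement of the orientation conventions used in Lemma 5.4 of \cite{generation}.

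The annulus $\Annbar_{d}^{-}(\vx)$ is the genuinely new case, since no puncture can be filled to make the domain simply connected. Here the key observation is that the boundary component carrying the $d$ incoming ends maps to a loop in $L=\Tq$ which is necessarily contractible, so that $TL$ is canonically trivialised along it; Lemma \ref{lem:equivalence_Spin_structures} together with the relative $\Spin$ structure on $L$ therefore equips the restriction of $u^{*}(E_{b})$ to that circle with a distinguished $\Spin$ structure, with no further choice. Since the inclusion of either boundary circle of an annulus is a homotopy equivalence, this structure extends uniquely to $u^{*}(E_{b})$ over the whole annulus --- note that the outgoing circle, which maps to a possibly non-contractible loop in $Q$, could not have served this purpose, which is why the statement invokes relative $\Spin$ structures on both $L$ and $Q$. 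Restricting the resulting structure to the outgoing circle and combining with the relative $\Spin$ structure on $Q$ fixes the boundary condition there; gluing the caps $D_{x_{k}}$ at the $d$ incoming punctures and then capping off the incoming circle --- possible because the boundary condition along it is trivialised --- reduces $D_{u}$ to a disc operator with boundary on $Q$, and yields the isomorphism \eqref{eq:orientation_annuli}, the relatively $\Spin$ upgrade of Lemma C.4 of \cite{generation}.

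It then remains to check that these orientations are coherent at the codimension one strata, i.e.\ that fundamental chains can be chosen so that \eqref{eq:boundary_chains_discs_SH} and \eqref{eq:fundamental_class_annuli} hold with the stated signs. This is the laborious but mechanical step: for each stratum in the lists \eqref{eq:boundary_disc_from_SH} and \eqref{eq:boundary_moduli_annuli_interior_breaking}--\eqref{eq:boundary_moduli_annuli_boundary_bubbling} one compares the boundary orientation induced by \eqref{eq:orientation_annuli} with the product of the orientations of the two factors --- using that the $\Spin$ structures on $u^{*}(E_{b})$ constructed above are by design compatible with the gluing maps --- and reads off a Koszul sign, exactly as in the computation illustrated after \eqref{eq:orientation_annuli} and as in \cite{generation} and \cite{string-top}. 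I expect this sign bookkeeping, together with the one substantive geometric input above (that on an annulus the $\Spin$ structure on $u^{*}(E_{b})$ must be fixed using the contractible Lagrangian $L$ rather than the possibly homotopically non-trivial loop on $Q$), to be the only real obstacles; once these are settled, every stratum is disposed of by the same argument as in the contractible case.
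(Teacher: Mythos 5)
Your treatment of $\Discbar^{1}(y)$ and $\Discbar_{d}^{1}(y;\vx)$ matches the paper's argument almost verbatim: retract the punctured disc onto a loop around the interior puncture, propagate the $\Spin$ structure from $y^{*}(E_{b})$, invoke Lemma~\ref{lem:equivalence_Spin_structures}, and proceed as in Section~11 of \cite{seidel-book}. For the annulus, however, you take a genuinely different route from the paper, and I think there is a real gap.

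The paper handles the annulus by letting the modular parameter of the glued surface $\bar S$ go to infinity, degenerating it into two discs $T_{1}$ and $T_{2}$, each with a single interior puncture facing the other across a ``circle at infinity.'' A $\Spin$ structure must be \emph{chosen} on the restriction of $u^{*}(E_{b})$ to that circle, and this choice orients each $\det(D_{T_{i}})$ relative to $\lambda(Q)$ and $\lambda(L)$ respectively; the point is then that changing the choice reverses \emph{both} orientations simultaneously, so the orientation of $\det(D_{\bar S})$, obtained via the isomorphism \eqref{eq:split_annulus_two_discs_interior_det}, is independent of it. This also immediately produces both factors $\lambda(\Tq)\otimes\lambda(Q)$ appearing in \eqref{eq:orientation_annuli}. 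Your proposal instead tries to eliminate the ambiguity at the source, by pinning down a $\Spin$ structure on the boundary circle of $\bar S$ lying on $L$ and propagating it across the annulus. To do so you appeal to the contractibility of $L=\Tq$, which canonically trivialises $\gamma^{*}(TL)$ along the loop $\gamma$ and hence, via the relative $\Spin$ structure \eqref{eq:rel_pin_structure}, singles out a $\Spin$ structure on $\gamma^{*}(E_{b})$. But the appendix is written, and the Lemma stated, in the generality of an arbitrary pair of graded, relatively $\Spin$ Lagrangians $L$ and $Q$. For a non-simply-connected $L$ the boundary loop of an element of $\Ann_{d_{1}}^{-}(\vx[1])$ need not be nullhomotopic, so there is no canonical trivialisation of $\gamma^{*}(TL)$ and your argument breaks. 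The paper's degeneration trick is precisely designed to sidestep this: it never needs the $\Spin$ structure on any single boundary circle to be canonical, only that the residual ambiguity cancels.

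A second, smaller concern: even in the special case $L=\Tq$, ``capping off the incoming circle'' to reduce $D_{u}$ to a single disc operator with boundary on $Q$ does not transparently reproduce the factor $\lambda(\Tq)\otimes\lambda(Q)$ in \eqref{eq:orientation_annuli}, which in the paper arises naturally because the annulus splits into \emph{two} discs, one with boundary on each Lagrangian. To make your version of the argument produce orientations agreeing (up to the stated Koszul signs) with those used in \eqref{eq:fundamental_class_annuli}, you would still need to compare your capping construction against the paper's degeneration construction, which effectively reintroduces the two-disc picture. I would recommend following the paper's argument for the annulus case, reserving your observation about $L=\Tq$ as a useful sanity check in the concrete cotangent-bundle situation.
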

\begin{proof}
We shall omit the easier case of  $\Discbar^{1}(y)$, and focus on the key point in the other two cases: producing from the above data an orientation of the determinant line of the operator obtained by gluing the operators associated to the inputs and outputs to the linearised Cauchy-Riemann operator at a given element of the moduli space.

We first consider an element of $   \Discbar_{d}^{1}(y; \vx)$, i.e. a map
\begin{equation*}
  u \co S \to M
\end{equation*}
from a disc with an interior (negative) puncture converging to $y$ and $d$ boundary (positive) punctures converging to the chords listed in the sequence $\vx$.  We may glue the operator $D_{x_k}$ at the $k$\th incoming puncture to the linearisation $D_{u}$ of the $\dbar$ operator at $u$, and obtain a glued operator
\begin{equation} \label{eq:glued_operator_half_disc}
  D_{u} \# D_{x_d} \# \cdots \# D_{x_1}
\end{equation}
on a disc $T$ with one interior marked point.  Letting $\lambda$ stand for the top exterior power of a vector space, the determinant line of this operator is naturally isomorphic to
\begin{equation} \label{eq:gluing_det_lines}
  \det(D_{u}) \otimes o_{x_d} \otimes \cdots \otimes o_{x_1}  \cong \lambda( \Discbar_{d}  )^{-1} \otimes  \lambda(\Discbar_{d}^{1}(y; \vx)) \otimes o_{x_d} \otimes \cdots \otimes o_{x_1}.
\end{equation}

The pullback of $E_{b}$ under $u$ defines a vector bundle over $T$.  A $\Spin$ structure on this vector bundle is induced by a $\Spin$ structure on $y^{*}(E_{b})$ since the inclusion of a neighbourhood of the boundary puncture is homotopy equivalent to $T$.  Using Lemma \ref{lem:equivalence_Spin_structures}, we obtain a $\Spin$ structure on the boundary conditions of  the glued operator \eqref{eq:glued_operator_half_disc}.  We can now use standard results (e.g. Proposition 11.13 of \cite{seidel-book}) to produce a natural isomorphism
\begin{equation}
  \det( D_{u} \# D_{x_d} \# \cdots \# D_{x_1}  ) \cong o_{y} \otimes \lambda( TL).
\end{equation}
Note that $\kappa^{b}_{y}$ does not appear in this equation even though we asserted that the moduli space was oriented relative  $  o_{y} \otimes  \kappa_{y}^{b}$:  this is because a trivialisation of $ \kappa^{b}_{y}$, i.e. a $\Spin$ structure on $y^{*}(E_b)$, was already used to produce this isomorphism, and changing the $\Spin$ structure will change the isomorphism by a sign.

Combining this isomorphism with that of Equation \eqref{eq:gluing_det_lines} gives the desired (relative) orientation of $\Discbar_{d}^{1}(y; \vx)$.

We shall now reduce the case of annuli to that of punctured discs: so we consider a map 
\begin{equation*}
  u \co S \to M
\end{equation*}
representing an element of $\Ann_{d_1}^{-}(\vx[1])   $.  In particular, we may glue the operator $D_{x_k}$ at the $k$\th incoming puncture to the linearisation $D_{u}$ of the $\dbar$ operator at $u$, and obtain a glued operator $D_{\bar{S}}$ on an annulus $\bar{S}$ with one boundary component free of marked points and punctures, and the other carrying one marked point.

At this stage, we let the modular parameter of $\bar{S}$ go to infinity, and obtain a degeneration of $\bar{S}$ into two discs $T_1$ and $T_2$ each carrying exactly one puncture which lies in the interior (the surface appearing on the right in Figure \ref{fig:half_discs_1_input} shows the corresponding degeneration if we add a puncture to one of the two discs).  Choosing a deformation of the Cauchy-Riemann operator along this path of conformal structures on the annulus (and keeping the boundary conditions unchanged) we obtain an isomorphism
\begin{equation} \label{eq:split_annulus_two_discs_interior_det}
  \det(D_{\bar{S}}) \cong  \det(D_{T_1})  \otimes \det(D_{T_2}).
\end{equation}
The pullback of $E_b$ under $u$ deforms to vector bundles $E_1$ and $E_2$ on the manifolds $\bar{T}_1$ and $\bar{T}_2$ obtained by adding a circle at infinity along the interior end of $T_1$ and $T_2$, and the restrictions of the two vectors bundles to the circle at infinity are naturally isomorphic.  Using Lemma \ref{lem:equivalence_Spin_structures}, a choice of $\Spin$ structure on the restriction of $E_1$ (and hence $E_2$) to this circle induces a $\Spin$ structure on the boundary conditions of the operators $  D_{T_1}$  and $D_{T_2}$, and hence, as in the above discussion in the case $ u \in   \Discbar_{d}^{1}(y; \vx)$, an orientation of $ \det(D_{T_1}) $ and $\det(D_{T_2}) $, relative to the tangent spaces of $Q$ and $L$.

While the individual orientations on $ \det(D_{T_1}) $ and $\det(D_{T_2}) $ depend on this additional choice, changing the $\Spin$ structure on the restriction of $E_1$ to this circle reverses both orientations.  In particular, an orientation on $   \det(D_{\bar{S}})  $  is canonically determined, via Equation \eqref{eq:split_annulus_two_discs_interior_det}, by the data listed in the statement of the Lemma.
\end{proof}
\begin{rem}
 Having used the data introduced in Sections \ref{sec:geom-prel} and \ref{sec:constr-twist-sympl} to orient all the moduli spaces which appear in this paper, one might expect that we continue with a sign analysis verifying that the sign conventions in \cite{generation} are still valid in the twisted setting we consider.  This is rendered unnecessary by the following fact:  say $u_1$ and $u_2$ are elements of some moduli spaces $\Mod_1$ and $\Mod_{2}$  of holomorphic curves which can be glued to form a  broken curve $u=u_{1} \# u_2$ in the boundary of another moduli space $\Mod$.  The signs appearing in Floer theory come from (1) Koszul signs which are introduced when rearranging the tensor product of the isomorphisms which give relative orientations of the tangent spaces $T_{u_1} \Mod_1$ and $ T_{u_2} \Mod_2$ (e.g. Equation \eqref{eq:orientation_bundles_half-discs}) to yield the isomorphism giving the relative orientation of $ T_{u} \Mod$  and (2) any remaining difference in orientation between the natural orientation of a product and that of a boundary.  

 In the twisted case, all our constructions of orientations use the pullbacks $u_{1}^{*}(E_{b})$, $u_{2}^{*}(E_{b})$, and $u^{*}(E_{b})$ to reduce to the case of $\Spin$ boundary conditions.  Since there is a natural isomorphism between $u^{*}(E_{b})$ and the result of gluing  $u_{1}^{*}(E_{b})$ and $u_{2}^{*}(E_{b})$, no new sign arises because of differences between product and boundary orientations.

 At the same time, the choice of $\Spin$ structure on a pullback of $E_{b}$ appear as a new factor in the formula for the relative orientation of a moduli space (see e.g. $\kappa_{y}^{b}$ in Equation \eqref{eq:iso_tangent_half_cylinder}).  These are naturally graded in degree $0$ since they do not change the degree of the corresponding generator of symplectic cohomology.  In particular, they can be permuted freely in expressions like Equation \eqref{eq:orientation_bundles_half-discs} without the appearance of any new sign.
\end{rem}

Our final remark in this Section concerns the sign conventions in \cite{AS} which need to be corrected in order for Proposition \ref{prop:maps_iso} to be valid:
\begin{rem}
Abbondandolo and Schwarz define symplectic cohomology with $\bZ$ coefficients by implementing the coherent orientations of \cite{HF} in the setting of cotangent bundles.  They choose trivialisations  $x^{*}(T \TQ)$ for each Hamiltonian chord $x$, which they require to be induced by a trivialisation of the vertical distribution by complexifying, i.e. the vertical distribution is mapped to $i \bR^{n} \subset \bC^{n}$.  Their trivialisations lie in the same homotopy class as those we alluded to in Section \ref{sec:constr-twist-sympl}, and which are induced by a complex form on $T \TQ$ obtained by complexifying a real volume form on $Q$:  this is true essentially because the vertical distribution has constant phase with respect to such a volume form.  In order to obtain a chain map relating Floer and Morse theory, the solution is quite simple: either twist the contribution of each cylinder to the differential in Hamiltonian Floer cohomology by a sign which vanishes if and only if the trivialisation of the vertical sub-bundle fixed at both end extends to the cylinder, or twist the Morse homology of the loop space by a local system.    The first solution recovers a Hamiltonian Floer homology group canonically isomorphic to our twisted symplectic cohomology group $SH^{*}_{b}(\TQ)$.

In order to understand why the untwisted version of the construction in \cite{AS} agrees with our untwisted symplectic cohomology group, we briefly discuss orientations. Given a solution $u$ to Floer's equation (\ref{eq:dbar_cylinder}) with asymptotic conditions at orbits $x$ and $y$, there is, up to homotopy,  a unique trivialisation of $u^{*}(T \TQ)$ which agrees with  the trivialisations fixed at the end.  Having linearised the problem, the theory of coherent orientations developed by Floer and Hofer in \cite{HF} is then used in \cite{AS} to associate signs to each cylinder. To conclude the desired isomorphism, use the essential uniqueness of coherent orientations  (e.g. Theorem 12 of \cite{HF}) to work with the following conventions which are close to that used in Section  \ref{sec:constr-twist-sympl}:  choose for each chord $y$ an orientation on the determinant line of some Cauchy-Riemann operator $D_{y}$ on the plane which agrees with the linearisation of  Equation \eqref{eq:dbar_cylinder} with asymptotic condition $y$ at infinity in the chosen trivialisation (i.e. trivialise the free abelian group $|o_{y}|$ appearing in Equation \eqref{eq:map_orientation_lines}).  Gluing this operator to linear Cauchy-Riemann operators on cylinders, we obtain coherent orientations as in \cite{HF}, proving that the Hamiltonian Floer homology group in \cite{AS} is isomorphic to our symplectic cohomology group for trivial background class.

\end{rem}

\section{Hitting the fundamental class} \label{sec:hitting}
In the setting of simplicial sets, Goodwillie constructed an isomorphism  between the Hochschild homology of the chains of the based loop space and the homology of the free loop space.  As noted in the introduction, this leads us to the expectation that the map defined in Section \ref{sec:an-ad-hoc} is a quasi-isomorphism.   However, we prefer to avoid delving into a comparison theorem between simplicial sets and the non-standard cubical models for homology used in this paper.  Instead, by factoring the inclusion of  $H_{*}(Q)$ in  $H_{*}(\sL Q)$ through Hochschild homology, we shall prove the following result, which is all that is required:
\begin{lem}
\label{lem:fundamental_class_in_image}
The fundamental class of $Q$ lies in the image of $ H^{*}( \G) $.
\end{lem}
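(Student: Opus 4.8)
The plan is to prove the stronger statement that the inclusion of $Q$ into $\sL Q$ as the constant loops factors, up to chain homotopy, through the cyclic bar complex. Concretely, I would construct a chain map
\[ \Psi \co \Lef_{-*}(Q) \longrightarrow CC_{*}\bigl(C_{-*}(\Omega_{q}Q)\bigr) \]
together with a chain homotopy between $\G \circ \Psi$ and the map induced by the inclusion of constant loops $Q \hookrightarrow \sL Q$. Granting this, passing to homology and evaluating on the fundamental class of $Q$ gives $H^{*}(\G)\bigl(\Psi_{*}[Q]\bigr) = [Q]$ inside $H_{-*}(\sL Q)$, which is precisely Lemma \ref{lem:fundamental_class_in_image}.

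To build $\Psi$, fix a CW decomposition of $Q$ and choose, by induction on skeleta, a \emph{connection} on the based path fibration $P_{q}Q \to Q$: since this is trivial over each cell $e \co D^{k}\to Q$, one picks a continuous family of paths $p_{e(s)}$ from $q$ to $e(s)$, arranging inductively that over $\partial e$ it agrees with the family already fixed on the lower skeleton, the discrepancy being a loop at $q$ which --- as the attaching map bounds the cell --- is recorded as auxiliary data. On the top cell $e$, $\Psi(e)$ is a length-two Hochschild chain manufactured from $p_{e}$: it is engineered so that $\G^{(2)}$ of it is the family of null-homotopic loops obtained by concatenating $p_{e}$ with its return path and then dragging the base point once across the cell, whose base-point projection recovers $e$. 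This is the exact mechanism by which $\gamma^{-1}\otimes\gamma$ accounts for the fundamental class of $S^{1}$ in the example of Section \ref{sec:an-ad-hoc}; it works only because $C_{-*}(\Omega_{q}Q)$ is modelled by (fat) cubical chains, in which such dragging families are genuine non-degenerate chains even when $\Omega_{q}Q$ is homotopy discrete --- so a naive ``out and back'' path does not suffice and the attaching maps enter essentially. The remaining, lower-length components of $\Psi$ are correction terms, of length one and of length two built from the connection homotopies, playing the role of the chain $\sigma$ in that example and making $\Psi$ a genuine chain map.

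The verification that $\G \circ \Psi$ is chain-homotopic to the inclusion of constants is carried out at the chain level in $\Lef_{-*}(\sL Q)$, where the essential point is that the subdivision and permutation relations defining $\Lef$ --- the subcomplexes $\fit$, $\permute$, $\quotient$ of Appendix \ref{sec:cuboidal-chains} --- force the required identities even though they fail in ordinary cubical chains. This is the same phenomenon that makes $\G$ itself a chain map and that underlies the verification in Section \ref{sec:an-ad-hoc} that $\gamma^{-1}\otimes\gamma + \sigma$ hits the fundamental class of $S^{1}$. Signs are handled by the principle at the end of Appendix \ref{sec:signs-orientations}: every orientation here arises from a pullback of $E_{b}$, and these glue canonically, so no new sign contributions appear.

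The main obstacle is the inductive construction of the connection together with its corrections: one must check at each skeletal stage that the discrepancy between the connections on adjacent cells becomes nullhomotopic in $\Omega_{q}Q$ after the relevant concatenations, that the nullhomotopies can be chosen compatibly over the whole complex, and that the resulting length-one and length-two correction terms are simultaneously compatible with the Hochschild differential and the cubical boundary. Put differently, a homotopy-coherent factorisation of the inclusion of constants through Hochschild chains is easy, but it must be rigidified to an honest chain-level statement --- and it is exactly to make such rigidifications possible that the quotient complex $\Lef$ was introduced. (Passing instead to a simplicial model and comparing $\G$ directly with Goodwillie's chain map, whose homotopy to the inclusion of constants is classical, is the route the author explicitly prefers to avoid.)
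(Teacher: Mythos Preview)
Your overall strategy is exactly the paper's: construct a chain map from chains on $Q$ to the cyclic bar complex and show that its composition with $\G$ is homotopic to the inclusion of constant loops. The paper implements this via Adams' cobar construction. After collapsing a maximal tree in a simplicial triangulation, each $k$-cell $\U=[u_0,\ldots,u_k]$ gives Adams' path family $\tau_{\U}\in C_{k-1}(\Omega_qQ)$, and the chain map $T$ sends $\sigma_{\U}$ to a sum of Hochschild words $\tau_{\U_d}\otimes\cdots\otimes\tau_{\U_1}$ over \emph{all} cyclic decompositions of $\U$ into $d$ subcells (every length $2\le d\le k+1$ occurs), together with length-one correction terms built from explicit contracting homotopies $\pi_{(\U[1],\U[2])}$. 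The verification that $\G\circ T\simeq\iota_*$ then reduces to the basepoint observation you describe for $S^1$.

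The gap in your proposal is the assertion that $\Psi$ can be built using only length-one and length-two components. Your ``connection on the path fibration'' is Adams' data $\tau_{\U}$ in different language, and the same obstruction arises: the length-two part of $b\Psi$ contains terms $\tau_{\U_2}\otimes\mu_1\tau_{\U_1}$, and by the Adams boundary relation $\mu_1\tau_{\U_1}$ has concatenation summands $\tau_A\cdot\tau_B$. The resulting words $\tau_{\U_2}\otimes(\tau_A\cdot\tau_B)$ do not match $\Psi^{(2)}(\partial\sigma_{\U})$ and can only be cancelled by the $\mu_2$-contractions in $b$ applied to length-\emph{three} words; this cascade is exactly why the paper's $T$ carries words of every length. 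The relations defining $\Lef$ do not help here --- they identify chains in the target $\Lef_{-*}(\sL Q)$, whereas the equation $b\Psi=\Psi\partial$ must hold in the Hochschild complex itself. (Two smaller points: Appendix~\ref{sec:cuboidal-chains} defines only $\fit$ and $\permute$, not a third subcomplex; and the appeal to Appendix~\ref{sec:signs-orientations} for signs is misplaced, since the signs in $T$ and in the homotopy $\G\circ T\simeq\iota_*$ are purely combinatorial and unrelated to $E_b$.)
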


Giving an explicit model for the maps introduced by Adams in \cite{adams}, we define
\begin{align*}
 a^{k-1} \co [0,1]^{k-1} & \to \Omega_{[0], [k]} \Delta^{k}\\
a^{k-1} (t_1, \ldots, t_{k-2}, t_{k-1})(s) & = \begin{cases} (1,s, 0, \ldots, 0, 0) & 0 \leq s \leq t_1 \\
(1, t_1, s - t_1 , \ldots, 0, 0) &  t_1 \leq s \leq t_1 + t_2 \\
\cdots & \cdots \\
(1, t_1, t_2 , \ldots, t_{k-1}, s - \sum_{j=1}^{k-1} t_j)  &  \sum_{j=1}^{k-1} t_j \leq s \leq 1 +  \sum_{j=1}^{k-1} t_j.
  \end{cases}
\end{align*}
The reader unfamiliar with the intuition behind these formulae should consult Figure \ref{fig:adams_paths}.
\begin{figure}
\centering
\includegraphics{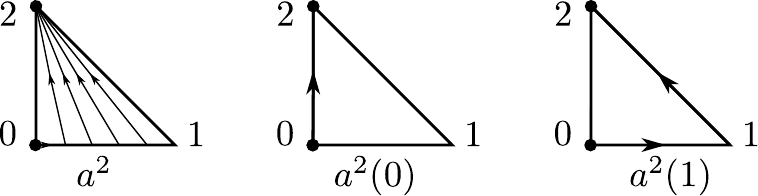}
\caption{}
\label{fig:adams_paths}
\end{figure}

Recall that a simplicial triangulation of $Q$ is a triangulation in which the vertices are totally ordered, and such that every cell may be uniquely represented by a sequence of vertices which is increasing with respect to this order.  Let us pick a subdivision of $Q$ into simplices by collapsing a maximal tree from a simplicial triangulation, and write $q$ for the unique resulting vertex.  In particular, every $k$-cell of this subdivision is still uniquely determined by an increasing sequence of the vertices of the original triangulation.  Every such  cell  $\U = [u_{0}, \ldots, u_{k}] $ determines a map $ \sigma_{\U} \co \Delta^{k} \to Q$, and hence a cubical chain in the based loop space:
\begin{align}
  \tau_{\U} \co [0,1]^{k-1} & \to \Omega_{q} Q \\
\tau_{\U} & \equiv \sigma_{\U} \circ a_{k-1}.
\end{align}
More generally, we shall consider sequences which become increasing after a cyclic reordering:  given such a reordering $\U['] = [u_i, \ldots, u_k, u_0, \ldots, u_{i-1}]  $ of $\U$, we obtain a different map $\tau_{\U[']} $ by composing $ \sigma_{\U} \circ a_{k-1} $  with the automorphism of the simplex that cyclically reorders the vertices.

Adams essentially observed that these chains satisfy the following inductive relation:
\begin{equation}
  \label{eq:boundary_adams_chains}
  \partial  \tau_{\U} = \sum_{0 < j < k} (-1)^{j} \tau_{\U - \{u_j\}} + \sum_{\substack{\U[1] = [u_0, \ldots, u_r] \\ \U[2] = [u_r, \ldots, u_k]} } \mu_{2}^{\P}(\tau_{\U[1]}, \tau_{ \U[2]} ).
\end{equation}
Given an integer $r$ between $0$ and $k$, we obtain a family of loops  in $\Delta^{k}$ based at $[0]$ by concatenating  (1) the composition of $a^{r-1}$ with the inclusion of the face $[0, \ldots, r]$ and (2) the composition of $ a^{k-r} $ with the face $[r, \ldots, k, 0]$.  Omitting the inclusion of faces from the notation, we consider a contraction of this family of loops to the basepoint
\begin{align}
c^{k}_{r} \co [0,1]^{k} & \to \Omega_{[0]} \Delta^{k}\\
c^{k}_{r} (t_1, \ldots, t_{k}) & = t_{k} (1,0, \ldots, 0  ) + (1-t_k) \left( a^{r-1}(t_1, \ldots, t_{r-1}) \cdot  a^{k-r}(t_r, \ldots, t_{k-1}) \right)   \label{eq:formula_contracting_homotopy}
\end{align}
\begin{figure}
 \centering
  \includegraphics{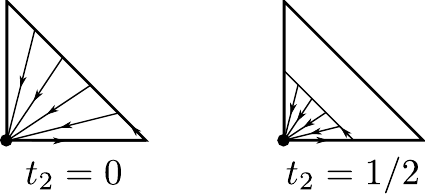}
  \caption{ }
  \label{fig:contract_concatenate}
\end{figure}
Figure \ref{fig:contract_concatenate} shows the restriction of $c^{1}_2$ to $t_2=0,1/2$.

This construction defines families of loops parametrised by appropriate pairs of cells in a simplicial triangulation.  Explicitly, given two cells $\U[1] = [u_0, \ldots, u_r]$ and $\U[2] = [u_r, \ldots, u_k, u_0]  $ with $r$ is an integer between $0$ and $k$ such that $ [u_0, \ldots, u_k]$ is a cell in $Q$, we write
\begin{equation}
  \label{eq:cap_cells}
  \U[1] \wedge \U[2]  =  [u_0, \ldots, u_k],
\end{equation}
and define a cubical chain in the loop space
\begin{align}
  \pi_{(\U[1], \U[2])} \co  [0,1]^{k} & \to \Omega_{q} Q \\
\pi_{(\U[1], \U[2])} & \equiv \sigma_{ \U[1] \wedge \U[2]} \circ  c^{k}_{r}.
\end{align}
Given a triple $ \V[1] $,  $\V[2]$, and $ \V[3] $  whose initial and final vertices agree cyclically, we may similarly construct a family of loops
\begin{equation*}
  \pi_{(\V[1], \V[2], \V[3])} \co  [0,1]^{k-1} \to \Omega_{q} Q 
\end{equation*}
by concatenating the paths associated to the three cells, and using the last coordinate to contract to the starting point of $ \V[1]$.  Again, we note that these maps make sense even if we cyclically reorder the vertices.

We desire a formula for the boundary of $  \pi_{(\U[1], \U[2])}  $  which is analogous to Equation \eqref{eq:boundary_adams_chains} for the chains $  \tau_{\U} $.  If $k=1$, the chain $ \pi_{[u_0, u_1], [u_1, u_0]}  $ is one dimensional, and corresponds to the family of paths  which start at $u_0$  move along the edge $[u_0,u_1]$ then turn back towards $u_0$.   The boundary consists of the constant path at $u_0$ and the concatenation of the paths from $u_0 $ to $u_1$ and back. We conclude that
\begin{equation}
  \partial \pi_{[u_0, u_1], [u_1, u_0]} = [u_0] - \tau_{[u_0,u_1]} \cdot  \tau_{[u_1,u_0]} .
\end{equation}
If $k \geq 2$, the boundaries of the cubical chains $  \pi_{(\U[1], \U[2])}  $ are given by:
\begin{multline}
  \label{eq:contract_adams_chains}
  \partial  \pi_{\U[1], \U[2]} =  (-1)^{k-r+1} \mu_{2}(\tau_{\U[2]},   \tau_{\U[1]} ) + \sum_{0 < j < r}  (-1)^{j} \pi_{\U[1] - \{u_j\},  \U[2]} + \sum_{r < j \leq k} (-1)^{j+1}  \pi_{\U[1] ,  \U[2] - \{u_j\}} \\
+ \sum_{\substack{ \V[1] = [u_0, \ldots, u_j]  \\ \V[2] =  [u_j, \ldots, u_r] }}  (-1)^{j+1}   \pi_{(\V[1], \V[2], \U[2])} + \sum_{\substack{ \V[2] = [u_r, \ldots, u_j] \\ \V[3] =  [u_j, \ldots, u_0] } }   (-1)^{r+j+1}  \pi_{(\U[1], \V[2], \V[3])} 
\end{multline}  
The first term comes from the boundary facet $t_k=0$ in Equation \eqref{eq:formula_contracting_homotopy}, and the remaining term are essentially a consequence of Equation \eqref{eq:boundary_adams_chains} which describes the boundaries of the chains that Adams constructed.

Given a cell $\U = [u_0, \ldots, u_k]$, we consider the element $T(\sigma_{\U})$ of the cyclic bar complex of $ C_{-*}( \Omega_{q}Q) $ given by the sum
\begin{multline}
\label{eq:sum_adams}
 \sum_{0 \leq  j_1 < j_2 < \ldots <  j_d \leq k} (-1)^{d+ j_1 (k+1)  + j_d} \tau_{\U_d} \otimes \cdots \otimes \tau_{\U_1} + \\
\sum_{0 \leq  j_1 < j_2  \leq k}   (-1)^{j_1 k  +j_1 +  j_2 + 1}  \pi_{(\U[1], \U[2])} + (-1)^{(j_2 +1)( k+1)   - j_1 }   \pi_{(\U[2], \U[1])} \end{multline}
where  $\U_i = [u_{j_i}, \ldots, u_{j_{i+1}}]  $ if $i \neq d$ and $\U_d = [ u_{j_d}, \ldots, u_{k}, u_{0}, \ldots, u_{j_1}] $.  In particular, all the cells respect the original order, except possibly for $U_{d}$ in the first line, and $U_{2}$ in the second.
\begin{lem}
Equation \eqref{eq:sum_adams} defines a chain map  \begin{align}
    T \co C_{-*}(Q) & \to CC_{*}(C_{-*} \Omega_{q}Q  ) \\
 \sigma_{\U} & \mapsto T(\sigma_{\U}).
  \end{align}
\end{lem}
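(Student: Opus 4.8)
The only content is the identity $b \circ T = T \circ \partial$, where $\partial$ is the simplicial boundary $\partial\sigma_{\U} = \sum_{j}(-1)^{j}\sigma_{\U - \{u_{j}\}}$ on $C_{-*}(Q)$ and $b$ is the Hochschild differential \eqref{eq:hochschild_differential} on the cyclic bar complex \eqref{eq:cyclic_bar_complex}. Since $C_{-*}(\Omega_{q}Q)$ is a differential graded algebra, only $\mu_{1}^{\P} = \partial$ and $\mu_{2}^{\P}$ survive in $b$, so $b$ has exactly three kinds of contributions: the internal differential $\partial$ applied to a single tensor slot; the Pontryagin product of two adjacent slots; and the single cyclic term in which the last slot is multiplied into the first. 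The plan is to expand $b(T(\sigma_{\U}))$ by substituting the boundary formula \eqref{eq:boundary_adams_chains} for $\partial\tau_{\U_{i}}$ into each $\tau$-slot and the boundary formula \eqref{eq:contract_adams_chains} for $\partial\pi_{(\cdot,\cdot)}$ into each $\pi$-term, and then to match what remains with $T(\partial\sigma_{\U}) = \sum_{j}(-1)^{j}T(\sigma_{\U - \{u_{j}\}})$.

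First I would sort the resulting terms into four groups. (i) Deletion of a vertex $u_{j}$ interior to one of the subcells (the first sum in \eqref{eq:boundary_adams_chains}, and the analogous vertex-deletion sums in \eqref{eq:contract_adams_chains}) reassembles, slot by slot, into $T(\partial\sigma_{\U})$; these are the only surviving terms. (ii) The cell-splitting terms $\mu_{2}^{\P}(\tau', \tau'')$ in \eqref{eq:boundary_adams_chains} cancel against the adjacent-product part of $b$: inserting the splitting vertex into the index sequence $j_{1} < \cdots < j_{d}$ identifies, up to sign, the splitting term coming from one word of \eqref{eq:sum_adams} with the product term coming from the neighbouring word in which that vertex has been adjoined. (iii) The cyclic term of $b$ --- which merges $\tau_{\U_{1}}$ with the slot $\tau_{\U_{d}}$ straddling the seam $u_{0} = u_{k}$ --- is cancelled by the $t_{k} = 0$ facets of $\partial\pi_{(\U[1],\U[2])}$ and $\partial\pi_{(\U[2],\U[1])}$, that is, by the terms $(-1)^{k-r+1}\mu_{2}(\tau_{\U[2]}, \tau_{\U[1]})$ of \eqref{eq:contract_adams_chains}; this is precisely why the $\pi$-corrections were included in the definition of $T$. (iv) The leftover three-cell terms $\pi_{(\V[1],\V[2],\V[3])}$ produced by $\partial\pi_{(\cdot,\cdot)}$ cancel in pairs: a cyclically ordered triple of cells $(A,B,C)$ is produced once by splitting the first slot of $\pi_{(A\cup B,\, C)}$ and once by splitting the second slot of $\pi_{(A,\, B\cup C)}$, and because concatenation of Moore loops is strictly associative these are literally the same cubical chain, so the weights in \eqref{eq:sum_adams} are chosen so that the two occurrences carry opposite signs. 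This is the same mechanism that forced the passage to the quotient by $\fit$ of \eqref{eq:chains_that_fit} in the proof that $\G$ is a chain map, except that here, working over the triangulation of $Q$ rather than over chains on $\sL Q$, the cancellation is exact.

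The main obstacle is the sign bookkeeping, which must reconcile three sign systems: the Hochschild signs $\maltese$ and $\shuffle$ of \eqref{eq:hochschild_differential} (with the convention, explained after that formula, that operations act from the right), the weights $(-1)^{d + j_{1}(k+1) + j_{d}}$, $(-1)^{j_{1}k + j_{1} + j_{2} + 1}$ and $(-1)^{(j_{2}+1)(k+1) - j_{1}}$ attached to the summands of \eqref{eq:sum_adams}, and the signs of \eqref{eq:boundary_adams_chains} and \eqref{eq:contract_adams_chains}. The genuinely delicate point is the interplay of the cyclic structure with the seam $u_{0} = u_{k}$: one has to track the Koszul signs incurred when cyclically reordering the vertices of $\U_{d}$ (and of $\U[2]$ in the $\pi$-terms) past that seam, and check that the constraints imposed by groups (ii), (iii) and (iv) are mutually consistent. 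Verifying this consistency --- equivalently, checking that the weights in \eqref{eq:sum_adams} were chosen correctly --- is the whole of the computation; granting it, $b \circ T = T \circ \partial$, and the evident naturality of the construction in $\U$ makes $T$ a well-defined chain map.
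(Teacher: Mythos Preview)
Your strategy is the paper's: the same four-fold bookkeeping (vertex deletions surviving as $T(\partial\sigma_{\U})$; Adams splittings cancelling Hochschild products; the $\pi$-boundaries absorbing what lands in length one; the triple chains $\pi_{(\V[1],\V[2],\V[3])}$ cancelling in pairs). One point to correct in your sorting: group (iii) as you state it only applies when the cyclic merge lands in length one, i.e.\ when $d=2$. For $d\geq 3$ the cyclic Hochschild term $\mu_{2}(\tau_{\U_{1}},\tau_{\U_{d}})\otimes\tau_{\U_{d-1}}\otimes\cdots\otimes\tau_{\U_{2}}$ is a word of length $d-1\geq 2$ and cannot be touched by $\partial\pi$, which lives entirely in length one; it is instead cancelled by mechanism (ii), against a splitting term inside $\mu_{1}^{\P}(\tau_{\U_{d}\wedge\U_{1}})$ via \eqref{eq:boundary_adams_chains}. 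Conversely, when $d=2$ \emph{both} Hochschild products $\mu_{2}(\tau_{\U_{2}},\tau_{\U_{1}})$ and $\mu_{2}(\tau_{\U_{1}},\tau_{\U_{2}})$ land in length one and are absorbed by the first term of \eqref{eq:contract_adams_chains} for $\pi_{(\U[1],\U[2])}$ and $\pi_{(\U[2],\U[1])}$ respectively. With that reallocation your outline matches the paper's sketch.
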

\begin{proof}[Sketch of proof] 
We shall explain the proof ignoring signs.  First, note that the boundary of $\tau_{\U_d} \otimes \cdots \otimes \tau_{\U_1}   $  in the cyclic bar complex is given by
\begin{multline} \label{eq:differential_sequence_simplices}
 \sum_{i=1}^{d}   \tau_{\U_d} \otimes \cdots \otimes \mu_1(\tau_{\U_i}) \otimes \cdots \otimes \tau_{\U_1}  + \sum_{i=1}^{d-1}  \tau_{\U_d} \otimes \cdots \otimes \mu_{2}(\tau_{\U_i}, \tau_{\U_{i+1}}) \otimes \cdots \otimes \tau_{\U_1}  \\
 + \mu_{2}(\tau_{\U_1},  \tau_{\U_d})  \otimes \tau_{\U_{d-1}}  \otimes \cdots \otimes \tau_{\U_2}.
\end{multline}
We claim the sum of that these expressions over all possible sequences $\U_1, \ldots, \U_d$ where the last element of $\U_i$ agrees with the first element of $\U_{i+1}$  is equal to the sum of those terms in $T \left( \partial \sigma_{U} \right)$ consisting of words of length greater than $1$.  To see this, we first note that, if $d>2$, we can form a new sequence by applying the operation of Equation \eqref{eq:cap_cells} to two successive elements.   Using Equation \eqref{eq:boundary_adams_chains}, we find that 
\begin{equation*}
  \tau_{\U_d} \otimes \cdots \otimes \mu_1 \left( \tau_{\U_i \wedge \U_{i+1}} \right) \otimes \cdots\otimes \tau_{\U_1} 
\end{equation*}
contributes exactly one term which cancels with
\begin{equation*}
 \tau_{\U_d} \otimes \cdots \otimes \mu_{2}(\tau_{\U_i}, \tau_{\U_{i+1}}) \otimes \cdots \otimes \tau_{\U_1}.
\end{equation*}
Similarly, the last term in Equation \eqref{eq:differential_sequence_simplices} cancels with one of the terms in 
\begin{equation*}
 \mu_1 \left( \tau_{\U_d\wedge \U_{1}} \right)  \otimes \cdots \otimes \tau_{\U_2}. 
\end{equation*}
The remaining terms are obtained by applying Equation \eqref{eq:boundary_adams_chains} to the first sum in Equation \eqref{eq:differential_sequence_simplices} yeilding:
\begin{equation*}
  \sum_{i=1}^{d}   \tau_{\U_d} \otimes \cdots \otimes  \tau_{\U_i - \{ {u^i}_j\}} \otimes \cdots \otimes \tau_{\U_1}.
\end{equation*}
Taking this sum over all sequences $ \U_1, \ldots, \U_d $ where $d \geq 2$ gives the sum of all terms in $T \left( \partial \sigma_{U} \right)$ whose length is greater than $2$.

To complete the proof, we must show that
\begin{equation*}
\sum_{\U_1, \U_2}   \mu_{2}(\tau_{U_2}, \tau_{U_1}) + \mu_{2}(\tau_{U_1}, \tau_{U_2})   +  \partial \pi_{(\U[1], \U[2])} +  \partial \pi_{(\U[2], \U[1])} 
\end{equation*}
cancels with the component of $T(\partial \sigma_{\U})$ consisting of words of length $1$ in the cyclic bar complex; the first two term above cancel with the first terms in Equation \eqref{eq:contract_adams_chains} applied to $  \partial \pi_{(\U[1], \U[2])}$ and  $ \partial \pi_{(\U[2], \U[1])}  $, the second two terms in  Equation \eqref{eq:contract_adams_chains} are exactly those cancelling with $T( \partial \sigma_{\U})  $, while the last two terms in Equation \eqref{eq:contract_adams_chains} cancel each other after taking the sum over all choices of $\U_1$ and $\U_2$.
\end{proof}

The final result needed for the proof of Lemma \ref{lem:fundamental_class_in_image} is:
\begin{lem}
 The composition
 \begin{equation}
\xymatrix{    C_{-*}(Q)  \ar[r]^(.4){T} &  CC_{*}(C_{-*} \Omega_{q}Q  ) \ar[r]^(.55){\G} & \Lef_{-*}(\sL Q) }
 \end{equation}
is homotopic to the map induced by the inclusion of constant loops in the free loop space.
\end{lem}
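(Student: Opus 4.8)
The plan is to compute the composition $\G \circ T$ explicitly on a cell $\sigma_{\U}$ with $\U = [u_0, \ldots, u_k]$ and to identify it, up to a chain homotopy, with the image of $\sigma_{\U}$ under the inclusion of constant loops. First I would recall that $\G$ is nonzero only on words of length $1$ and $2$, and that $\G^{(1)}$ is (up to sign) the inclusion $\iota_*$ of based loops into free loops followed by the projection to $\Lef$, while $\G^{(2)}(\sigma_2 \otimes \sigma_1) = -\sigma_1 \btimes \sigma_2$ is the ``move the basepoint around'' construction of Equation \eqref{eq:moving_basepoint}. So applying $\G$ to the formula \eqref{eq:sum_adams} for $T(\sigma_{\U})$ kills all the terms $\tau_{\U_d} \otimes \cdots \otimes \tau_{\U_1}$ with $d \geq 3$ and with $d=1$ it contributes the Adams chain $\tau_{\U}$ itself viewed in $\sL Q$, while with $d = 2$ it contributes the chains $\tau_{\U_1} \btimes \tau_{\U_2}$ (for the length-$2$ tensors coming from the first line) together with $\iota_*$ of the chains $\pi_{(\U[1],\U[2])}$ and $\pi_{(\U[2],\U[1])}$ from the second line of \eqref{eq:sum_adams}.

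The geometric heart of the argument is then the observation that, for a fixed pair $(\U[1],\U[2])$ with $\U[1]\wedge\U[2] = \U$, the chains $\tau_{\U[1]} \btimes \tau_{\U[2]}$ and $\iota_*(\pi_{(\U[1],\U[2])})$ and $\iota_*(\pi_{(\U[2],\U[1])})$ assemble, after taking the quotient defining $\Lef$, into (part of) the boundary of a manifestly degenerate chain, so that the whole of $\G \circ T(\sigma_{\U})$ differs from $\iota_*(\sigma_{\U})$ (constant loops) by the boundary of an explicit chain and lower-dimensional correction terms. Concretely, I would build a single cubical chain $\Sigma_{\U} \co [0,1]^{k+1} \to \sL Q$ interpolating between: the Adams loop $\tau_{\U}$ at one face; the constant loop at $u_0$ at the opposite face (this is where the image of the inclusion of constant loops appears, since $\sigma_{\U}$ composed with ``collapse to vertex'' recovers the cell of $Q$ up to the degenerate-chain relations in $\Lef$); and the basepoint-moving family $\tau_{\U[1]} \btimes \tau_{\U[2]}$ on intermediate faces, using the same ``contract along the last coordinate'' mechanism that defines $c^k_r$ in \eqref{eq:formula_contracting_homotopy}. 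The existence of such a chain is essentially the statement that the family of loops $c^k_r$ constructed in Appendix \ref{sec:hitting} is a homotopy from the concatenated loop to the constant loop, reread in the free (rather than based) loop space. The compatibility of these chains as $\U$ varies over cells and the pairs $(\U[1],\U[2])$ vary is governed by Equation \eqref{eq:contract_adams_chains} for $\partial\pi_{\U[1],\U[2]}$ together with \eqref{eq:boundary_adams_chains} for $\partial\tau_{\U}$; matching the $\G^{(2)}$-image of the length-$2$ part of $T(\partial\sigma_{\U})$ against $\partial$ of the $\pi$-terms is exactly the cancellation already verified in the proof of the previous lemma, now transported through $\iota_*$.

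I would organise the proof as follows: (i) write $\G \circ T(\sigma_{\U})$ as a sum of the Adams chain $\tau_{\U}$, the chains $\tau_{\U[1]}\btimes\tau_{\U[2]}$, and $\iota_*\pi_{(\U[1],\U[2])}$, $\iota_*\pi_{(\U[2],\U[1])}$; (ii) define the homotopy $\H(\sigma_{\U})$ to be the image in $\Lef_{-*}(\sL Q)$ of the cubical chain obtained by the contracting family above (one gets this for free from the maps $c^k_r$ already in hand, after composing with $\sigma_{\U[1]\wedge\U[2]}$ and projecting); (iii) compute $\partial \H(\sigma_{\U})$ using \eqref{eq:boundary_adams_chains} and \eqref{eq:contract_adams_chains}, observing that the $\mu_2$-boundary terms there reproduce exactly $\tau_{\U[1]}\btimes\tau_{\U[2]}$ after passing to the quotient complex (the $\btimes$ construction is precisely what the subcomplex $\fit$ of \eqref{eq:chains_that_fit} is designed to handle), while the codimension-one faces $t_k = 0, 1$ give the constant loop and the Adams chain respectively; (iv) check that the remaining terms in $\partial\H$, coming from the $\pi_{(\V[1],\V[2],\V[3])}$ pieces and the $\pi_{\U[i]-\{u_j\}}$ pieces, combine with $\H(\partial\sigma_{\U})$ so that $\partial\H + \H\partial = \G\circ T - (\text{constant loop inclusion})$; this uses that $T$ and $\G$ are both chain maps, established above. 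The main obstacle, as in the rest of the paper, will be the sign bookkeeping: one must track the Koszul signs from \eqref{eq:sum_adams}, the signs $(-1)^i$ introduced in $\G^{(1)}$ and the $-$ in $\G^{(2)}$, the orientation conventions on the cubes $[0,1]^{k+1}$, and the permutation signs incurred when passing to the quotient complex $\Lef_*$ via the subcomplexes $\permute$ and $\fit$; I would handle this by the same strategy as in the proof that $\G$ is a chain map, namely reducing everything to the two universal relations \eqref{eq:boundary_adams_chains} and \eqref{eq:contract_adams_chains} and the already-verified cancellations, rather than recomputing from scratch.
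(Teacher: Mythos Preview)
There is a genuine gap in your proposal, concentrated in step (ii)--(iii). You claim that the face of your homotopy $\Sigma_{\U}$ on which the loops have been contracted is ``the constant loop at $u_0$'', and that this face ``is where the image of the inclusion of constant loops appears, since $\sigma_{\U}$ composed with `collapse to vertex' recovers the cell of $Q$ up to the degenerate-chain relations in $\Lef$.'' This is false: a $k$-dimensional cubical chain all of whose values are the constant loop at the single point $q$ is \emph{degenerate}, hence zero in $\Lef_{*}$; it does not represent $\iota_*(\sigma_{\U})$. The map $\iota_*$ applied to the cell $\sigma_{\U}$ is the family of constant loops $x \mapsto (\text{constant loop at }\sigma_{\U}(x))$, whose basepoints sweep out the whole simplex. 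The contracting families $c^{k}_{r}$ you invoke all end at the constant loop based at $q$, so a homotopy built from them will exhibit $\G \circ T(\sigma_{\U})$ as a boundary modulo lower cells, not as $\iota_*(\sigma_{\U})$ plus a boundary.

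The paper's argument is organised around exactly this point. One first uses the contractibility of $\Delta^{k}$ (not the contractions $c^{k}_{r}$) to homotope each free loop appearing in $\G \circ T(\sigma_{\U})$ to the constant loop at \emph{its own basepoint}, i.e.\ to show $\G \circ T \simeq \iota_* \circ \ev_* \circ \G \circ T$, where $\ev_*$ records the basepoint. Then one computes $\ev_*$ on each summand: the length-$1$ contributions $\iota_*\pi_{(\U[1],\U[2])}$ have basepoint identically $q$, so $\ev_*$ of them is degenerate; for the length-$2$ contributions $\tau_{\U_1} \btimes \tau_{\U_2}$, the basepoint depends only on the $\tau_{\U_1}$-parameters and the extra $[0,1]$-coordinate, so $\ev_*$ is degenerate unless $\dim \U_2 = 1$. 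The unique such term has $\U_2 = [k,0]$ and $\U_1 = [0,\ldots,k]$, and for this term the basepoints cover the cell $\sigma_{\U}$ with multiplicity one, recovering $\iota_*(\sigma_{\U})$. The missing idea in your proposal is that the inclusion of constant loops is produced not by the contracting homotopy, but by the basepoint-moving part of the $\btimes$ construction in the single surviving term.
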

\begin{proof}[Sketch of proof]
Recall that the map $\G$ vanishes on all words in the cyclic bar complex of length greater than $2$; in particular, it suffices to consider the components of $T$ consisting of words of length $1$ and $2$.  By construction, the image of a cell $\sigma_{\U}$ under the composition $\G \circ T $ is a sum of cubical chains all of which may be written as the composition of a map to $\Delta^{k}$ with $\sigma_{\U}$.  As this simplex is contractible, we may contract every such chain to a constant loop at its basepoint.  Whenever the basepoint of a loop lies on a boundary facet, we can moreover choose the contraction of a cell to be an extension of one chosen on the boundary.   We conclude that $\G \circ T$ is homotopic to
\begin{equation*}
 \iota_{*} \circ  \ev_{*} \circ \G \circ T
\end{equation*}
where $\ev_{*}$ is the evaluation from the chains of the based loop space to the chains on $Q$, and $\iota_{*}$ is the inclusion of constant loops.  

Since $\G$ agrees on words of length $1$ with the map induced by the inclusion from constant to based loops,  we find that 
\begin{equation*}
  \ev_{*}( \pi_{U_1,U_2})
\end{equation*}
has image a point, and hence vanishes whenever the dimension of $Q$ is greater than $0$ because we are working with normalised chains.  Similarly, as soon as the dimension of $\U_{2}$ is greater than $1$, we have
\begin{equation*}
  \ev_{*}( \tau_{\U_2} \otimes \tau_{\U_1}) = 0
\end{equation*}
because the corresponding cubical chain factors through projection to a cube of dimension $\dim(\U_1)$.  By inspecting Equation \eqref{eq:sum_adams}, we find that the only case where $ \U_{2} $ has dimension $1$ corresponds to 
\begin{equation*}
   \U_{2} = [k,0]  \textrm{ and }   \U_{1} = [0, \ldots, k].
\end{equation*}
The reader can now easily check that the basepoints of $\G(  \tau_{[k,0]} \otimes \tau_{[0, \ldots, k]}  )$ cover the cell $ [0, \ldots, k] $ with multiplicity one.
\end{proof}

\section{A convenient quotient of cubical chains} \label{sec:cuboidal-chains}
Given a topological space $X$, we consider a model for the space of chains which is not standard, and has the convenience of making some of the formulae in the paper relatively simple; in particular, if we were to use the usual cubical chains, the definition of the map $\G$ in Section \ref{sec:an-ad-hoc} would be require non-vanishing terms coming from longer words in the cyclic bar complex.  The purpose of this section is to construct this model and prove that it is chain equivalent to the usual cubical chains.  In the simplicial theory, analogous results are known, though they are usually proved using different techniques (see, for example \cite{barr}).

Recall that the cubical chain complex is the free abelian group generated by maps of cubes to $X$ modulo those which factor through the projection to a factor:
\begin{align*}
 C_{i}(X) & = \frac{\bZ\left[ \Map([0,1]^{i}, X) )   \right] }{\bZ \left[\textrm{degenerate maps}\right]}.
\end{align*}
 The differential is given by the formula 
\begin{equation} \label{eq:differential_cover_generalised}
  \partial \sigma =  \sum_{k=1}^{i} \sum_{\epsilon = 0,1}   \partial_{k,\epsilon} \sigma =  \sum_{k=1}^{i} \sum_{\epsilon = 0,1}   (-1)^{k + \epsilon}  \sigma \circ \delta_{k,\epsilon}
\end{equation}
where $\delta_{k,\epsilon}$ is the inclusion of the face where the $k$\th coordinate is constant and equal to $\epsilon$. 
\begin{defin}
We say that a pair $\sigma_1$ and $\sigma_2$ of generators of the same dimension \emph{fit together} if 
\begin{equation*}
  \sigma_1 \circ \delta_{i,1} =  \sigma_2 \circ \delta_{i,0}.
\end{equation*}
\end{defin}

In this case, given a map $f \co [0,1]^{i-1} \to  [0,1]$ such that 
\begin{align}
  \label{eq:conditions_on_f}
f^{-1}(0) & \subset \{ (t_1, \ldots, t_{i-1}) | \sigma_{1}(t_1, \ldots, t_{i-1}, t_{i})  \textrm{ is independent of } t_{i} \} \\   
f^{-1}(1) & \subset \{ (t_1, \ldots, t_{i-1}) | \sigma_{2}(t_1, \ldots, t_{i-1}, t_{i})  \textrm{ is independent of } t_{i} \}
\end{align}
we define 
\begin{align}
 \label{eq:definition_concatenate}
  \sigma_1 \#_{f}  \sigma_2 \co [0,1]^{i}  & \to X \\
(t_1, \ldots, t_{i-1}, t_i) & \mapsto \begin{cases} \sigma_{1} (t_1, \ldots, t_{i-1},  \frac{t_i}{ f( t_1, \ldots, t_{i-1} ) }) & \textrm{ if } t_{i} \leq f(t_1, \ldots, t_{i-1}) \\
 \sigma_{2} (t_1, \ldots, t_{i-1},  \frac{t_i -  f( t_1, \ldots, t_{i-1} )}{1-  f( t_1, \ldots, t_{i-1} ) } ) & \textrm{ otherwise.}
 \end{cases}
\end{align}
Note that whenever $f(t_1, \ldots, t_{i-1})$ vanishes or is equal to $1$, $   \sigma_1 \#_{f}  \sigma_2$  is still well-defined and continuous at  $  (t_1, \ldots, t_{i-1}, 0 ) $  or at $  (t_1, \ldots, t_{i-1}, 1) $.   

The easiest way to produce cubical chains that fit together is to start  with a cubical chain $\sigma$ and an arbitrary map $ f \co [0,1]^{i-1} \to  [0,1] $.  The graph of $f$ splits the cube $[0,1]^i$ into two halves, which one may think of as 2 families of intervals of varying length parametrised by a cube of dimension $i-1$.    There is a canonical way of mapping $ [0,1]^{i} $  to each of these halves, using the identity in the first $i-1$ factors and ``shrinking'' the last coordinate  to have the appropriate length.  By composing this map with the restriction of $\sigma$ to each half we obtain two chains
\begin{align*}
  \sigma_{1}^{f} (t_1, \ldots, t_{i-1}, t_i)  & = \sigma(t_1, \ldots, t_{i-1}, f(t_1, \ldots, t_{i-1}) t_i)  \\
  \sigma_{2}^{f} (t_1, \ldots, t_{i-1}, t_i)  & = \sigma(t_1, \ldots, t_{i-1}, f(t_1, \ldots, t_{i-1})+  (1- f(t_1, \ldots, t_{i-1}))  t_i)
\end{align*}
which indeed fit together.  
\begin{lem}
The subgroup  \begin{equation} \label{eq:chains_that_fit}
\fit_{*}(X)= \bigoplus  \sigma_1 + \sigma_2 -  \sigma_1 \#_{f}  \sigma_2,
\end{equation}
where the direct sum is taken over all chains which fit together,  is a contractible subcomplex of $ C_{*}(X) $.
\end{lem}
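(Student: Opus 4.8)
The plan is to first verify that $\fit_*(X)$ is a subcomplex, and then to exhibit an explicit contracting homotopy. For the subcomplex claim, I would apply the boundary formula \eqref{eq:differential_cover_generalised} to a generator $\sigma_1+\sigma_2-\sigma_1\#_f\sigma_2$ with $\sigma_1,\sigma_2\co[0,1]^i\to X$ fitting together. In the concatenation direction (the last coordinate) one has $(\sigma_1\#_f\sigma_2)\circ\delta_{i,0}=\sigma_1\circ\delta_{i,0}$ and $(\sigma_1\#_f\sigma_2)\circ\delta_{i,1}=\sigma_2\circ\delta_{i,1}$, so the total contribution of the last-coordinate faces of $\sigma_1+\sigma_2-\sigma_1\#_f\sigma_2$ collapses to $(-1)^{i+1}\,\sigma_1\circ\delta_{i,1}+(-1)^{i}\,\sigma_2\circ\delta_{i,0}$, which vanishes by the relation $\sigma_1\circ\delta_{i,1}=\sigma_2\circ\delta_{i,0}$ defining ``fit together''. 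For $k<i$ and $\epsilon\in\{0,1\}$ the remaining faces assemble into $(-1)^{k+\epsilon}\bigl(\sigma_1\circ\delta_{k,\epsilon}+\sigma_2\circ\delta_{k,\epsilon}-(\sigma_1\circ\delta_{k,\epsilon})\#_{f\circ\delta_{k,\epsilon}}(\sigma_2\circ\delta_{k,\epsilon})\bigr)$, which is again a generator of $\fit_*(X)$: the restricted maps still fit together and $f\circ\delta_{k,\epsilon}$ still satisfies \eqref{eq:conditions_on_f}. Hence $\partial$ preserves $\fit_*(X)$.

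For contractibility, I would define $h\co\fit_*(X)\to\fit_{*+1}(X)$ on generators. Given $\sigma_1+\sigma_2-\sigma_1\#_f\sigma_2$ with $\sigma_1,\sigma_2\co[0,1]^i\to X$, insert a new coordinate $s$ into the $i$-th slot and, writing $\vec{t}=(t_1,\dots,t_{i-1})$, put
\begin{equation*}
\sigma_1^+(\vec{t},s,t)=\sigma_1\bigl(\vec{t},\,t+s(1-t)\bigr),\qquad \sigma_2^+(\vec{t},s,t)=\sigma_2\bigl(\vec{t},\,(1-s)t\bigr),\qquad F(\vec{t},s)=f(\vec{t}).
\end{equation*}
Then $\sigma_1^+$ and $\sigma_2^+$ fit together, because $\sigma_1^+\circ\delta_{i+1,1}(\vec{t},s)=\sigma_1(\vec{t},1)=\sigma_2(\vec{t},0)=\sigma_2^+\circ\delta_{i+1,0}(\vec{t},s)$, and $F$ satisfies \eqref{eq:conditions_on_f} for the pair $(\sigma_1^+,\sigma_2^+)$: on $F^{-1}(0)=f^{-1}(0)\times[0,1]$ the map $\sigma_1$ is already independent of its last variable, hence so is $\sigma_1^+$, and symmetrically for $F^{-1}(1)$ and $\sigma_2^+$. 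Thus $\sigma_1^+ + \sigma_2^+ - \sigma_1^+\#_F\sigma_2^+$ is a generator of $\fit_{i+1}(X)$, and I set $h\bigl(\sigma_1+\sigma_2-\sigma_1\#_f\sigma_2\bigr)=(-1)^i\bigl(\sigma_1^+ + \sigma_2^+ - \sigma_1^+\#_F\sigma_2^+\bigr)$. (If one is anxious about $\bZ$-linear relations among the generating elements, one first carries out this construction on the free complex spanned by the triples $(\sigma_1,\sigma_2,f)$, with differential the $k<i$ sum above, and then pushes it forward; the formula is natural enough in the triple that this is a formality.)

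It remains to check $\partial h+h\partial=\id$ on $\fit_*(X)$, which one does by running the first computation on $h\bigl(\sigma_1+\sigma_2-\sigma_1\#_f\sigma_2\bigr)$. The faces in the last ($(i+1)$-st) coordinate cancel as before. In the new coordinate $s$, i.e.\ coordinate $i$: at $\epsilon=1$ both $\sigma_1^+$ and $\sigma_2^+$ are independent of the last variable $t$, with common value $\sigma_1\circ\delta_{i,1}=\sigma_2\circ\delta_{i,0}$, so all three terms of that face are degenerate and die in $C_*(X)$; at $\epsilon=0$ one recovers precisely $(-1)^i\bigl(\sigma_1+\sigma_2-\sigma_1\#_f\sigma_2\bigr)$, as $\sigma_j^+$ restricts to $\sigma_j$ and $F$ to $f$. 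The faces in the coordinates $k<i$ reproduce $-h\bigl(\partial(\sigma_1+\sigma_2-\sigma_1\#_f\sigma_2)\bigr)$ term by term, using $(\sigma_j\circ\delta_{k,\epsilon})^+=\sigma_j^+\circ\delta_{k,\epsilon}$ together with the analogous identities for $F$ and for $\#$; the sign $(-1)^i$ is chosen so that the degree shift works out ($(-1)^i\cdot(-1)^i=1$ on the $\epsilon=0$ term and $(-1)^i\cdot(-1)^{i-1}=-1$ on the $k<i$ terms). Therefore $h$ is a chain contraction and $\fit_*(X)$ is contractible. The main friction I anticipate is bookkeeping: checking that the degeneracy and gluing conditions of \eqref{eq:conditions_on_f} and \eqref{eq:definition_concatenate} (including the case where $f$ attains $0$ or $1$, where $\#_f$ is defined through the continuity remark after \eqref{eq:definition_concatenate}) propagate through all the restrictions, and matching the face signs of \eqref{eq:differential_cover_generalised} against the chosen $(-1)^i$.
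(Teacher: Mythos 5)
Your proof is correct and follows the same overall strategy as the paper: lift to an auxiliary free chain complex whose generators record the combinatorial data parametrizing the generators of $\fit_*(X)$ (for you, triples $(\sigma_1,\sigma_2,f)$; for the paper, pairs $(\sigma,f)$ with $\sigma$ in the role of $\sigma_1\#_f\sigma_2$), write down an explicit chain null-homotopy there, and observe that it descends through the surjection onto $\fit_*(X)$. The explicit formulas for the homotopy differ, however: the paper contracts via $\fold(\sigma,f)=(-1)^i(\sigma\circ\fold_{i+1},\,f\circ\fold_i)$, using the folding map $(t_i,t_{i+1})\mapsto\min(t_i+t_{i+1},1)$ that collapses two cube coordinates onto one, whereas yours inserts the new coordinate $s$ and affinely reparametrizes, pushing the last variable of $\sigma_1$ toward $1$ and of $\sigma_2$ toward $0$ as $s\to 1$, with $F(\vec t,s)=f(\vec t)$ constant in $s$. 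Both produce valid chain contractions; your version has the mild advantage of treating the two halves of the concatenation symmetrically rather than passing through the glued chain $\sigma$, and your explicit degeneracy check at the $s=1$ face plays the role of the paper's observation that $\partial_{i,1}$ kills $\sigma\circ\fold_{i+1}$. One point to tighten: the paper explicitly records the verification that the kernel of the surjection from the auxiliary free complex to $\fit_*(X)$ is preserved by the null homotopy, since this is what makes the descent legitimate. You defer this with ``the formula is natural enough in the triple that this is a formality''; that is true, but it is an actual step rather than a footnote and should be stated and checked, exactly as the paper does for its $\fold$.
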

\begin{proof}
An easy computation shows that the map
\begin{equation} \label{eq:map_chains_fit}
  (\sigma, f) \mapsto \sigma -   \sigma_{1}^{f}  -   \sigma_{2}^{f} 
\end{equation}
is a surjection from the chain complex generated by pairs $(\sigma, f)$ with differential
\begin{equation*}
  \partial_{\boxbar} (\sigma,f) = \sum_{k=1}^{i-1} \sum_{\epsilon = 0,1} (-1)^{k + \epsilon}  (\sigma \circ \delta_{k,\epsilon},f \circ  \delta_{k,\epsilon})
\end{equation*}
to $\fit_{*}(X)$.

To prove acyclicity, we consider the map
\begin{align} \label{eq:project_rectangle_interval}
 \fold_{i+1} \co [0,1]^{i+1}  & \to [0,1]^{i}  \\
(t_1,\ldots, t_{i-1}, t_i, t_{i+1}) & \mapsto \begin{cases}(t_1,\ldots, t_{i-1}, t_{i} + t_{i+1}) & \textrm{if } t_i + t_{i+1} \leq 1 \\
 (t_1,\ldots,  t_{i-1}, 1)  & \textrm{otherwise} , \end{cases}
\end{align}
which is the identity on the first $i-1$ factors, and projects the last two factors onto an interval.  Since the terms corresponding to $k=i$ are missing from this differential,  the assignment 
\begin{equation*}
\fold  (\sigma, f) = (-1)^{i}  (\sigma \circ \fold_{i+1}, f \circ \fold_i)
\end{equation*}
defines a null homotopy of the complex generated by pairs $(\sigma,f)$.  To check this, one computes that $\partial_{k,\epsilon}   $ commutes with $\fold$ whenever $k \leq i-1$, and that
\begin{align*}
  \partial_{i,0}  (\sigma \circ \fold_{i+1}, f \circ \fold_i) &  = (-1)^{i} (\sigma, f)  \\
 \partial_{i,1}  (\sigma \circ \fold_{i+1}, f \circ \fold_i) &  = 0.
\end{align*}
 Moreover, an easy computation shows that the kernel of the surjection \eqref{eq:map_chains_fit} is preserved by $\fold$, which implies that $\fit_{*}(X)$ is indeed contractible.
\end{proof}

We also consider the subgroup of $ C_{*}(X)  $
\begin{equation} \label{eq:subcomplex_permute}
\permute_{i}(X) =  \bigoplus_{\substack{\sigma \\ 1 \leq k < i}}  \sigma +  \sigma \circ \phi_{k}
\end{equation}
with $\sigma$ is cubical chain of dimension $i$ and $\phi_{k}$ is the self-homeomorphism of $[0,1]^i$ given by transposing the $k$\th and $k+1$\st factors.  Using the fact that
\begin{equation} \label{eq:cancelling_differential_permute} 
  \partial_{k,\epsilon}   \sigma +   \partial_{k+1,\epsilon}   \sigma  \circ \phi_{k} =  0
\end{equation}
it is easy to check that $ \permute_{*}(X) $ is a subcomplex.
\begin{lem}
 $ \permute_{*}(X) $ is a contractible subcomplex of $ C_{*}(X)  $.
\end{lem}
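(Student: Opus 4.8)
The argument is a direct parallel of the one just carried out for $\fit_*(X)$: one exhibits $\permute_*(X)$ as the image of a surjective chain map from an auxiliary complex of ``folded pairs'', transports the folding contraction, and checks that it descends. Set $A_*$ to be the complex freely generated by pairs $(\sigma, k)$, where $\sigma\co [0,1]^i\to X$ is a non-degenerate singular cube and $1\le k\le i-1$, with $(\sigma,k)$ placed in degree $i$ and subject to the relation $(\sigma,k)\sim(\sigma\circ\phi_k,k)$, and equipped with the differential
\[
  d(\sigma,k)=\sum_{\substack{1\le j\le i\\ j\notin\{k,k+1\}}}\ \sum_{\epsilon=0,1}(-1)^{j+\epsilon}\bigl(\sigma\circ\delta_{j,\epsilon},\ \kappa_j(k)\bigr),\qquad
  \kappa_j(k)=\begin{cases} k-1 & j<k,\\ k & j>k+1.\end{cases}
\]
The faces along the $k$\th and $(k+1)$\st coordinates are dropped precisely because their contributions cancel in $C_*(X)$ by \eqref{eq:cancelling_differential_permute}; using the identities $\phi_k\circ\delta_{j,\epsilon}=\delta_{j,\epsilon}\circ\phi_{k-1}$ for $j<k$ and $\phi_k\circ\delta_{j,\epsilon}=\delta_{j,\epsilon}\circ\phi_k$ for $j>k+1$, one checks that $d$ is well defined modulo $\sim$, that $d^2=0$, and that $(\sigma,k)\mapsto\sigma+\sigma\circ\phi_k$ defines a surjective chain map $A_*\to\permute_*(X)$ onto the subcomplex \eqref{eq:subcomplex_permute}.

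Next I would contract $A_*$ by a folding operator in the spirit of $\fold$. Given $(\sigma,k)$ of dimension $i$, introduce an extra coordinate at a position other than the $k$\th or $(k+1)$\st and collapse it together with an adjacent coordinate using the capped-addition map $\fold_{i+1}$ of \eqref{eq:project_rectangle_interval}: concretely, when $k<i-1$ insert it last and fold it into the $i$\th coordinate, keeping the index at $k$; when $k\ge 2$ insert it first and fold it into the new second coordinate, shifting the index up by one; the finitely many generators in low dimension not covered by either rule are treated directly. As for $\fit_*(X)$, one verifies that the face created by the fold returns $(\sigma,k)$, that its opposite face is degenerate, that the two omitted faces remain omitted, and that every other face commutes with the fold up to the expected Koszul sign, so that $d\circ h+h\circ d=\pm\,\mathrm{id}$ on $A_*$. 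Since $\permute_*(X)$ is a bounded-below complex of free abelian groups, it is then enough to conclude that $A_*$ — hence $\permute_*(X)$ — is acyclic, which follows once one also checks that $h$ carries $\ker\!\bigl(A_*\to\permute_*(X)\bigr)$ into itself.

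The point requiring care — rather than the lengthy but routine sign bookkeeping, which is handled exactly as in the preceding lemma — is the compatibility of the folding operator with the transposition relation $(\sigma,k)\sim(\sigma\circ\phi_k,k)$ and with the kernel of the presentation map: this is the reason the extra coordinate must be introduced \emph{away} from the transposed pair, since that is exactly what makes the fold commute with $\phi_k$. Verifying this compatibility (and dealing with the handful of low-dimensional pairs where no such insertion is available, using instead the explicit nullhomotopy of those classes directly) is the main obstacle; everything else is a transcription of the argument for $\fit_*(X)$.
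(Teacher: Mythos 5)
Your setup is right and parallels the paper closely: you pass to an auxiliary complex of pairs $(\sigma,k)$ modulo the transposition relation, with the differential dropping the $k$\th and $(k+1)$\st faces, and you map onto $\permute_*(X)$ by $(\sigma,k)\mapsto\sigma+\sigma\circ\phi_k$. The difference — and the gap — is in the null-homotopy. You propose to reuse the fold operator $\fold$, inserting the extra coordinate \emph{away} from the transposed pair (last when $k<i-1$, first when $k\ge 2$), but this does not produce the identity, and the failure is structural rather than a sign to fix.

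Concretely, take $k<i-1$ and consider $h(\sigma,k)=\pm(\sigma\circ\fold_{i+1},k)$, an $(i+1)$-cell. Your differential on the auxiliary complex omits only $j=k,k+1$, so \emph{both} $j=i$ and $j=i+1$ contribute to $d\,h(\sigma,k)$. But $\fold_{i+1}\circ\delta_{i,0}=\fold_{i+1}\circ\delta_{i+1,0}=\mathrm{id}$, so both of those faces of $\sigma\circ\fold_{i+1}$ return $\sigma$, and they appear with the opposite signs $(-1)^i$ and $(-1)^{i+1}$. The would-be identity term cancels itself. There is also a mismatch at $j=i$ between $d\,h$ and $h\,d$: the $\partial_{i,\epsilon}$-face of $\sigma\circ\fold_{i+1}$ is $\sigma$ (for $\epsilon=0$) or degenerate (for $\epsilon=1$), whereas $h\bigl(\partial_{i,\epsilon}(\sigma,k)\bigr)$ is $(\sigma\circ\delta_{i,\epsilon})\circ\fold_i$ — not the same cell. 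So ``every other face commutes with the fold'' fails precisely at $j=i$. The analogous analysis of inserting at the front (your case $k\ge 2$) hits the same cancellation between $j=1$ and $j=2$. This is not a bookkeeping issue that a finite list of low-dimensional corrections can patch: it persists for all $i$ and $k$.

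The reason the fold works in the $\fit$ argument but not here is that $\partial_\boxbar$ there omits \emph{the last face} ($j$ ranges only to $i-1$), so only one of the two identity faces of the folded cell enters the differential. In $\permute_*$, the omitted faces are $k,k+1$ — and these are exactly the coordinates to which the contraction must be attached. The paper's operator $\boxdot_k$ does precisely that: it contracts the pair $(t_k,t_{k+1})$ toward the center $(1/2,1/2)$ along the new last coordinate $t_{i+1}$. Then the only \emph{non-omitted} face touched by the contraction is $j=i+1$, which returns $\sigma$ at $\epsilon=0$ and a cell constant in $t_k,t_{k+1}$ (hence degenerate) at $\epsilon=1$; every face $j\not\in\{k,k+1,i+1\}$ genuinely commutes with $\boxdot_k$, and the contraction visibly commutes with $\phi_k$ because $(1/2,1/2)$ is symmetric. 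To repair your argument you would need to replace the fold by such a symmetric contraction anchored at the transposed coordinates rather than next to them; inserting the auxiliary coordinate anywhere else will always create a cancelling second identity face.
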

\begin{proof}
We define a chain complex generated by a cubical cell $\sigma$ and a transposition  $\phi_k$, with differential
\begin{equation} \label{eq:differential_pair_cell_transpo}
  (\sigma, \phi_k) \mapsto  \sum_{\epsilon=0}^{1} \left(  \sum_{j < k}(-1)^{j+\epsilon } ( \sigma \circ \delta_{j,\epsilon}, \phi_{k-1}) +  \sum_{k+1 <i} (-1)^{j+\epsilon } ( \sigma \circ \delta_{j,\epsilon}, \phi_{k}) \right).
\end{equation}

Consider the map
\begin{align*}
  \boxdot \co [0,1]^{2} \times [0,1] & \to [0,1]^2 \\
(t_1,t_2, t_3) & \mapsto \left( \frac{1}{2} + (1-t_3) \left(t_1 - \frac{1}{2}\right) ,  \frac{1}{2} + (1-t_3) \left(t_2 - \frac{1}{2}\right)  \right)
\end{align*}
which gives a homotopy between the identity and the constant map at $\left( \frac{1}{2}, \frac{1}{2}\right) $; given a pair of positive integers $k$ and $i$ such that $1 \leq k \leq i$, we write 
\begin{equation*}
  \boxdot_{k} \co  [0,1]^{i+1} = [0,1]^{i} \times [0,1]  \to [0,1]^{i}
\end{equation*}
for the map which is the identity except on the $k$\th, $k+1$\st, and last factor, where it is given by $\boxdot$.   A short computation shows that the map 
\begin{equation}
  \label{eq:homotopy_permute}
  (\sigma  , \phi_{k} ) \mapsto (-1)^{i+1} (\sigma \circ \boxdot_{k} , \phi_{k} ).
\end{equation}
defines a null homotopy of the differential \eqref{eq:differential_pair_cell_transpo}.  The key point is that
\begin{align*}
  \sigma \circ \boxdot_{k} \circ \delta_{i+1,0}   & =  \sigma \\
   \sigma \circ \boxdot_{k}  \circ \delta_{i+1,1} & = 0
\end{align*}
while  composition with $\boxdot_k$ commutes with the other terms of the differential in Equation \eqref{eq:differential_pair_cell_transpo}.

Moreover, using Equation \eqref{eq:cancelling_differential_permute}, we find that the formula
\begin{equation*}
    (\sigma, \phi_k) \mapsto  \sigma +  \sigma \circ \phi_{k}
\end{equation*}
defines a chain map which surjects to  $ \permute_{*}(X) $, and whose kernel is preserved by the null homotopy.
\end{proof}

\begin{cor} \label{cor:standard_model_chains}
The natural map from the set of cubical chains to the quotient \begin{equation}
  \label{eq:cuboidal_chains_final}
 \Lef_{*}(X) = \frac{ C_{*}(X) }{\fit_{*}(X) +  \permute_{*}(X) }.
\end{equation}
is a quasi-isomorphism. \noproof
\end{cor}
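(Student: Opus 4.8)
The plan is to observe that the ``natural map'' in the statement is nothing but the quotient projection
\begin{equation*}
  \pi \co C_{*}(X) \longrightarrow \Lef_{*}(X) = C_{*}(X)/\bigl(\fit_{*}(X) + \permute_{*}(X)\bigr),
\end{equation*}
since $C_{*}(X)$ is the cubical chain complex itself and $\Lef_{*}(X)$ is by definition its quotient. Thus $\ker\pi = K_{*}(X) := \fit_{*}(X) + \permute_{*}(X)$, and from the long exact homology sequence of $0 \to K_{*}(X) \to C_{*}(X) \to \Lef_{*}(X) \to 0$ we see that $\pi$ is a quasi-isomorphism as soon as $K_{*}(X)$ is acyclic. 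So the whole statement reduces to showing acyclicity of the sum of the two subcomplexes that the two preceding lemmas have just shown to be individually contractible.

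I would then factor $\pi$ as $C_{*}(X) \xrightarrow{\pi_{1}} C_{*}(X)/\fit_{*}(X) \xrightarrow{\pi_{2}} \Lef_{*}(X)$. By the lemma asserting that $\fit_{*}(X)$ is a contractible subcomplex, $\pi_{1}$ is a quasi-isomorphism. The kernel of $\pi_{2}$ is the image $\overline{\permute}_{*}(X)$ of $\permute_{*}(X)$ in $C_{*}(X)/\fit_{*}(X)$, a quotient of the contractible complex $\permute_{*}(X)$ by $\fit_{*}(X)\cap\permute_{*}(X)$; so $\pi_{2}$ is a quasi-isomorphism precisely when $\overline{\permute}_{*}(X)$ is acyclic, which (by the Mayer--Vietoris sequence $0\to \fit_{*}(X)\cap\permute_{*}(X)\to \fit_{*}(X)\oplus\permute_{*}(X)\to K_{*}(X)\to 0$ together with the acyclicity of the two summands) is equivalent to acyclicity of the intersection $\fit_{*}(X)\cap\permute_{*}(X)$.

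To establish this I would imitate the proofs of the two preceding lemmas. The key structural point is that the contracting homotopy of $\permute_{*}(X)$ is implemented by composition with the maps $\boxdot_{k}$, which reparametrise only the $k$-th, $(k+1)$-st and an auxiliary new last coordinate, while the relations generating $\fit_{*}(X)$ involve only the gluing operation $\#_{f}$ in the previously last coordinate; because modulo $\permute_{*}(X)$ one may permute the coordinates of a cube (with sign), the potential overlap between the auxiliary coordinate and the distinguished gluing coordinate can be transposed out of the way, so that composition with each $\boxdot_{k}$ carries $\fit_{*}(X)$ into $\fit_{*}(X)+\permute_{*}(X)$ and $\permute_{*}(X)$ into itself. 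Feeding this through the auxiliary complex of pairs $(\sigma,\phi_{k})$ exactly as in the lemma on $\permute_{*}(X)$ --- whose surjection onto $\permute_{*}(X)$ has kernel preserved by the homotopy --- one extracts a contracting homotopy of $\overline{\permute}_{*}(X)$, hence of $\fit_{*}(X)\cap\permute_{*}(X)$, and $\pi = \pi_{2}\circ\pi_{1}$ is then a composite of quasi-isomorphisms.

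The main obstacle is precisely this last, bookkeeping-heavy verification: checking on the nose that composition with the maps $\boxdot_{k}$ (and, dually, with the maps $\fold_{i+1}$ used to contract $\fit_{*}(X)$) respects the two subcomplexes up to terms already killed in $\Lef_{*}(X)$, so that the two homotopies of the preceding lemmas assemble into a single contraction of $K_{*}(X)$; the edge cases where the auxiliary coordinate is adjacent to the gluing coordinate need the coordinate-permutation freedom of $\permute_{*}(X)$ and should be spelled out. An alternative route that avoids the explicit homotopies altogether would be an acyclic-models argument over the category of cubes $[0,1]^{n}$ --- $C_{*}(-)$ is free on these models and both $C_{*}(-)$ and $\Lef_{*}(-)$ are acyclic on them --- but the hands-on argument above is closer to the spirit of the rest of this appendix.
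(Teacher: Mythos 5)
The paper records this corollary with no proof at all: it is presented as immediate from the two preceding lemmas. Your opening reduction is correct, and it is in fact more careful than what the paper writes down. Acyclicity of $\fit_{*}(X)$ and of $\permute_{*}(X)$ does \emph{not} formally give acyclicity of the sum: the exact sequence
\begin{equation*}
0 \to \fit_{*}(X)\cap\permute_{*}(X) \to \fit_{*}(X)\oplus\permute_{*}(X) \to \fit_{*}(X)+\permute_{*}(X) \to 0
\end{equation*}
only yields $H_{*}\bigl(\fit_{*}(X)+\permute_{*}(X)\bigr)\cong H_{*-1}\bigl(\fit_{*}(X)\cap\permute_{*}(X)\bigr)$, so the corollary genuinely requires controlling the intersection, or equivalently the image of $\permute_{*}(X)$ in $C_{*}(X)/\fit_{*}(X)$, exactly as you say.

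The difficulty is that the step you then defer is the entire remaining content, and your sketch of it has real soft spots. First, the contracting homotopy of $\permute_{*}(X)$ lives on the auxiliary complex of pairs $(\sigma,\phi_{k})$, so what must be shown is that this homotopy preserves the preimage of $\fit_{*}(X)\cap\permute_{*}(X)$ under the surjection from that auxiliary complex; this is a statement about which linear combinations of generators $\sigma+\sigma\circ\phi_{k}$ happen to lie in $\fit_{*}(X)$, a set you never characterise. Second, the asserted compatibility of $\boxdot_{k}$ with the $\fit$-relations is not checked and is not formal: $\boxdot_{k}$ appends its auxiliary coordinate \emph{after} the gluing coordinate used by $\#_{f}$, so $(\sigma_{1}\#_{f}\sigma_{2})\circ\boxdot_{k}$ is not on the nose of the form $\tau_{1}\#_{f'}\tau_{2}$, and repairing this by a transposition reintroduces $\permute$-terms whose behaviour under the homotopy must again be controlled --- the very circularity you are trying to break. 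Finally, the acyclic-models escape route is not free either: $\Lef_{*}$ is a quotient of a free functor but is not visibly free on the models, and its acyclicity on the cubes $[0,1]^{n}$ is essentially an instance of the statement being proved. In short, your reduction is right and makes explicit a point the paper leaves silent, but the corollary is not yet proved by what you have written.
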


\begin{bibdiv}
\begin{biblist}
\bib{AS}{article}{
author={ Alberto Abbondandolo},
author={  Matthias Schwarz},
title={Floer homology of cotangent bundles and the loop product},
eprint={arXiv:0810.1995},
}

\bib{string-top}{article}{  
author = {Mohammed Abouzaid},
 title = {On the wrapped Fukaya category and based loops},
 eprint = {arXiv:0907.5606},
}

\bib{generation}{article}{  
author = {Mohammed Abouzaid},
 title = {A geometric criterion for generating the Fukaya category},
 eprint = {arXiv:1001.4593},
}

\bib{adams}{article}{
   author={Adams, J. F.},
   title={On the cobar construction},
   journal={Proc. Nat. Acad. Sci. U.S.A.},
   volume={42},
   date={1956},
   pages={409--412},
   issn={0027-8424},
   review={\MR{0079266 (18,59c)}},
}

\bib{barr}{article}{
   author={Barr, Michael},
   title={Oriented singular homology},
   journal={Theory Appl. Categ.},
   volume={1},
   date={1995},
   pages={No.\ 1, 1--9 (electronic)},
   issn={1201-561X},
   review={\MR{1324567 (96a:55005)}},
}

\bib{CL}{article}{
   author={Cieliebak, Kai},
   author={Latschev, Janko},
   title={The role of string topology in symplectic field theory},
   conference={
      title={New perspectives and challenges in symplectic field theory},
   },
   book={
      series={CRM Proc. Lecture Notes},
      volume={49},
      publisher={Amer. Math. Soc.},
      place={Providence, RI},
   },
   date={2009},
   pages={113--146},
   review={\MR{2555935}},
}

\bib{HF}{article}{
   author={Floer, A.},
   author={Hofer, H.},
   title={Coherent orientations for periodic orbit problems in symplectic
   geometry},
   journal={Math. Z.},
   volume={212},
   date={1993},
   number={1},
   pages={13--38},
   issn={0025-5874},
   review={\MR{1200162 (94m:58036)}},
   doi={10.1007/BF02571639},
}

\bib{good}{article}{
   author={Goodwillie, Thomas G.},
   title={Cyclic homology, derivations, and the free loopspace},
   journal={Topology},
   volume={24},
   date={1985},
   number={2},
   pages={187--215},
   issn={0040-9383},
   review={\MR{793184 (87c:18009)}},
   doi={10.1016/0040-9383(85)90055-2},
}

\bib{gromov}{article}{
   author={Gromov, M.},
   title={Pseudoholomorphic curves in symplectic manifolds},
   journal={Invent. Math.},
   volume={82},
   date={1985},
   number={2},
   pages={307--347},
   issn={0020-9910},
   review={\MR{809718 (87j:53053)}},
   doi={10.1007/BF01388806},
}

\bib{kragh}{article}{
author={Kragh, Thomas},
eprint={ arXiv:0712.2533},
title={The Viterbo Transfer as a Map of Spectra },
}

\bib{nadler}{article}{
author={ David Nadler},
title={ Microlocal branes are constructible sheaves},
journal={Selecta Mathematica, New Series},
volume={ 15},
number={ 4},
date={2009},
doi={10.1007/s00029-009-0008-0},
pages={563-619},
}

\bib{NZ}{article}{
author={ David Nadler},
author={Eric Zaslow},
title={ Constructible sheaves and the Fukaya category},
journal={ J. Amer. Math. Soc.},
volume={ 22},
date ={2009}, 
pages={233-286},
}

\bib{PSS}{article}{
   author={Piunikhin, S.},
   author={Salamon, D.},
   author={Schwarz, M.},
   title={Symplectic Floer-Donaldson theory and quantum cohomology},
   conference={
      title={Contact and symplectic geometry},
      address={Cambridge},
      date={1994},
   },
   book={
      series={Publ. Newton Inst.},
      volume={8},
      publisher={Cambridge Univ. Press},
      place={Cambridge},
   },
   date={1996},
   pages={171--200},
   review={\MR{1432464 (97m:57053)}},
}

\bib{seidel-book}{book}{
   author={Seidel, Paul},
   title={Fukaya categories and Picard-Lefschetz theory},
   series={Zurich Lectures in Advanced Mathematics},
   publisher={European Mathematical Society (EMS), Z\"urich},
   date={2008},
   pages={viii+326},
   isbn={978-3-03719-063-0},
   review={\MR{2441780}},
}

\bib{seidel:cp2}{article}{
   author={Seidel, Paul},
 title={A remark on the symplectic cohomology of cotangent bundles, after Thomas Kragh},
status ={unpublished note},
}

\bib{viterbo}{article}{
  author = {Claude Viterbo},
  title = {Functors and computations in Floer cohomology. Part II},
 eprint = {http://www.math.polytechnique.fr/cmat/viterbo/Prepublications.html},
}

\end{biblist}
\end{bibdiv}

\end{document}